\setlist[enumerate]{leftmargin=.5in}
\setlist[itemize]{leftmargin=.5in}
\newcommand{\insertTitle}{Global Stability of a Class of Difference Equations on Solvable Lie Algebras}
\title{\insertTitle
}
\date{\vspace{-5ex}}
\author{Philip James McCarthy\thanks{Partially funded by the Ontario Graduate Scholarship (OGS) and the Natural Sciences and Engineering Research Council of Canada (NSERC).} \qquad Christopher Nielsen
  \thanks{Supported by the Natural Sciences
    and Engineering Research Council of Canada (NSERC). } \thanks{The
    authors are with the Dept. of Electrical and Computer
    Engineering, University of Waterloo, Waterloo ON, N2L 3G1 Canada. 
    }
}
\begin{document}

\maketitle

\begin{abstract}
Motivated by the ubiquitous sampled-data setup in applied control, we examine the stability of a class of difference equations that arises by sampling a right- or left-invariant flow on a matrix Lie group. The map defining such a difference equation has three key properties that facilitate our analysis: 1) its power series expansion enjoys a type of strong convergence; 2) the origin is an equilibrium; 3) the algebraic ideals enumerated in the lower central series of the Lie algebra are dynamically invariant. We show that certain global stability properties are implied by stability of the Jacobian linearization of dynamics at the origin. In particular global asymptotic stability. If the Lie algebra is nilpotent, then the origin enjoys semiglobal exponential stability.
\end{abstract}



\section{Introduction}

We examine the stability of a class of difference equations that arises by sampling a right- or left-invariant flow on a matrix Lie group. There are many dynamical systems whose state spaces are naturally modelled as matrix Lie groups. Networks of oscillators can be modelled on~$\SO{2}^n$~\cite{Dorfler2014}. The group $\SE{3}$ captures the kinematics of rigid bodies in space, such as underwater vehicles~\cite{Leonard1997}, UAVs~\cite{Lee2010}, and robotic arms~\cite{Spong2006}. Robots exhibiting planar motion can be modelled on $\SE{2}$~\cite{Jin2014}. The unitary groups $\mathsf{U}(n)$ and $\mathsf{SU}(n)$~\cite{Petersen2010} can be used to model the evolution of quantum systems. Even the noise responses of some circuits evolve on Lie groups~\cite{Willsky1976}, specifically the solvable Lie group of invertible upper-triangular matrices.

Our main results -- Theorems~\ref{thm:Nilpotent},~\ref{thm:Solvable},~\ref{thm:Deadbeat}, and Corollaries~\ref{cor:SolvGAS} and~\ref{cor:SolvSGES} -- assert that there exists a sufficiently small spectral radius of the Jacobian linearization of the dynamics that implies various global stability properties of the origin, the weakest of which is global asymptotic stability. Lyapunov's Second Method can be used to establish local stability of an equilibrium, and it is a strong and surprising result when this method establishes global stability for a class of dynamical systems. In the continuous-time case, the Markus-Yamabe Conjecture~\cite{Meisters1996} supposes that global attractivity of a (unique) equilibrium is implied by the Jacobian of the vector field being everywhere Hurwitz; this conjecture is true for vector fields on $\Real^2$, but is in general false. The discrete-time analog of the Conjecture -- the key difference being that one supposes that the Jacobian is everywhere Schur -- similarly to the continuous-time case, is true for polynomial maps on $\Real^2$~\cite[Theorem B]{Cima1999} and in general false on $\Real^n$, $n \geq 3$. However, it is true for triangular maps on $\Real^n$~\cite[Theorem A]{Cima1999}. Again in continuous-time, Krasovskii's Method~\cite[p. 183]{Khalil2002} asserts that if there exists a symmetric positive definite $P \in \Real^{n \times n}$ that solves the Lyapunov equation for the Jacobian linearization at all $x \in \Real^n$, then the (unique) equilibrium is globally asymptotically stable.

In this paper we study dynamics on solvable Lie algebras. A Lie algebra is \emph{solvable} if and only if its derived length (see Definition~\ref{def:Solvable}) is finite. The complementary classification of Lie algebras is called \emph{semi-simple}, which is defined as those Lie algebras whose maximal solvable ideal -- the \emph{radical} -- is zero. Any Lie algebra $\g$ admits a \emph{Levi decomposition}, $\g = \mf l \inplus \mf r$, where $\mf r$ is the radical of $\g$, $\mf l$ is a semi-simple subalgebra of $\g$, and $\inplus$ means \emph{semidirect sum}\footnote{A detailed treatment of this decomposition can be found in, for example, \cite[\S$4$]{Onishchik1994} or~\cite[\S$3.14$]{Varadarajan1984}.}. This establishes that solvable Lie algebras are of fundamental importance in Lie theory. In control theory, it is possible to use solvable Lie algebras to approximate certain classes of vector fields~\cite{Crouch1984}. Nilpotent Lie algebras, a special case of solvable Lie algebras, can also be used for this purpose~\cite{Hermes1986,Struemper1998}.

In this paper, we study discrete-time dynamical systems of the form
\begin{equation}\label{eq:sys}
	X^+ = f(X,W),
\end{equation}
where $X \in \mc X := \g^n$, $n \geq 1$, $W \in \mc W := \g^r$, $r \geq 1$, and $f : \mc X \times \mc W \to \mc X$ is a \emph{Lie function} that belongs to \emph{class}-$\mc A$, which we define in Section~\ref{sec:f}. We make no general assumptions on the exogenous signal $W$, other than that it does not depend on $X$. We show that for this class of functions on solvable Lie algebras, global stability properties can be determined from the linear part of the dynamics.

\subsection{Step-Invariant Transforms}

In this section, we motivate the study of the class of systems described by~\eqref{eq:sys}, by showing that it arises naturally in the study of sampled-data control systems on Lie groups. In applied control, virtually all controllers are implemented using computers, and therefore evolve in discrete-time. The plant is often physical in nature and evolves in continuous-time. The combination of a discrete-time controller and a continuous-time plant is called \emph{sampled-data}. Figure~\ref{fig:SD} illustrates an example of this setup where the plant's dynamics evolve according to a right-invariant vector field on a matrix Lie group $\G$ with Lie algebra $\g$.

\begin{figure}[htb]
\centering
\includegraphics[width=0.75\columnwidth]{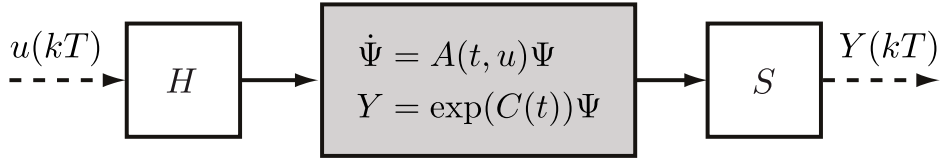}
\caption{Sampled-data right-invariant control system on a matrix Lie group $\G$.}
\label{fig:SD}
\end{figure}
%

The plant has state $\Psi(t) \in \G$, which evolves according to $\dot \Psi = A(t,u)\Psi$, where $A : \Real \times \Real^m \to \g$, and measured output $Y(t) \in \G$, which is defined by $Y = \exp(C(t))\Psi$, where $C : \Real \to \g$. The $H$ and $S$ blocks represent ideal zero-order hold and sample operations, respectively. By zero-order hold, we mean $(Hu)(t) = u(kT)$ for all $t \in [kT,(k + 1)T)$, where $k \in \Int$ is the discrete-time index and $T > 0$ is the sampling period. We assume that the hold and sample operations are synchronized and that $T$ is constant. The output $Y$ passes through the ideal sample, which is the identity map for all $t \in T\Int$, but is undefined otherwise. This sampled output is available for use by the controller, which generates the control signal $u[k] := u(kT)$. The discrete-time control signal passes through the ideal hold, yielding $u(t)$, which is piecewise constant. This piecewise constant control signal drives the plant.

The solution $\Psi(t)$ for the system in Figure~\ref{fig:SD} is given by the Magnus expansion~\cite{Blanes2009}, which provides an expression for $\Log(\Psi(t)) \in \g$ wherever the principal logarithm $\Log : \G \to \g$ is well-defined. Recall the adjoint operator of $A \in \g$, $\ad_A : \g \to \g$, $X \mapsto [A,X]$, where the Lie bracket $[\cdot,\cdot]$ is the commutator $AX - XA$. Define
%
%
\begin{equation*}
\begin{aligned}
	\Omega_1(t) &:= \int_0^tA(\tau,u(\tau))\mathrm d\tau \\
	\Omega_n(t) &:= \sum_{j = 1}^{n - 1}\frac{B_j}{j!}\sum_{\substack{k_1 + \cdots + k_j = n - 1 \\ k_1,\ldots,k_j \geq 1}}\int_0^t
		\ad_{\Omega_{k_1}(s)}\cdots\ad_{\Omega_{k_j}(s)}A(s,u(s))\mathrm ds, \quad n \geq 2,
\end{aligned}
\end{equation*}
where the $B_j$ are the Bernoulli numbers\footnote{Using the convention $B_1 = \frac{1}{2}$.}. Then 
\begin{equation}\label{eq:LocalMagnus}
	\Log(\Psi(t)) = \Log(\Psi(0)) + \sum_{n = 1}^\infty\Omega_n(t),
\end{equation}
which is a linear combination of the integral of $A$ and nested Lie brackets $\Omega_n(t)$, $n \geq 2$. In the sampled-data setup, due to the hold operator $H$, the plant is driven by a piecewise constant input signal. This motivates the \emph{step-invariant transform}, which is easily derived from~\eqref{eq:LocalMagnus}:
\begin{multline*}
	\Log(\Psi[k + 1]) = \Log(\Psi[k]) + \int_{kT}^{(k + 1)T}A(\tau,u[k])\mathrm d\tau \\
		+ \sum_{n = 2}^\infty\sum_{j = 1}^{n - 1}\frac{B_j}{j!}\sum_{\substack{k_1 + \cdots + k_j = n - 1 \\ k_1,\ldots,k_j \geq 1}}\int_{kT}^{(k + 1)T}
		\ad_{\Omega_{k_1}(s)}\cdots\ad_{\Omega_{k_j}(s)}A(s,u[k])\mathrm ds.
\end{multline*}

The solution simplifies significantly if for all $t_1,t_2 \in [kT,(k + 1)T)$, $A(t_1,u(t_1))$ commutes with $A(t_2,u(t_2))$\footnote{This is the case, for example, with the driftless kinematics of a rigid body with velocity inputs: ${A(t,u) = \sum_{i = 1}^mB_iu_i}$, $B_i \in \g$.}:
\begin{equation*}
	\Psi(t) = \exp\left(\int_0^tA(\tau,u(\tau)\mathrm d\tau\right)\Psi(0),
\end{equation*}
which yields the simplified step-invariant transform on the group $\G$:
\begin{equation}\label{eq:GlobalACommutes}
	\Psi[k + 1] = \exp\left(\int_0^TA(\tau,u[k])\mathrm d\tau\right)\Psi[k].
\end{equation}

We use the Baker-Campbell-Hausdorff formula to express these dynamics on the Lie algebra:
\begin{equation*}
	\Log(\exp(X)\exp(Y)) = X + Y + \frac{1}{2}[X,Y] + \frac{1}{12}[X,[X,Y]] + \frac{1}{12}[Y,[Y,X]] + \cdots,
\end{equation*}
which yields
\begin{multline*}
	\Log(\Psi[k + 1]) = \Log(\Psi[k]) + \int_0^TA(\tau,u[k])\mathrm d\tau + \frac{1}{2}\left[\int_0^TA(\tau,u[k])\mathrm d\tau,\Log(\Psi[k])\right] \\
		+ \frac{1}{12}\left[\int_0^TA(\tau,u[k])\mathrm d\tau,\left[\int_0^TA(\tau,u[k])\mathrm d\tau,\Log(\Psi[k])\right]\right] \\
		+ \frac{1}{12}\left[\Log(\Psi[k]),\left[\Log(\Psi[k]),\int_0^TA(\tau,u[k])\mathrm d\tau\right]\right] + \cdots,
\end{multline*}
%
which is a linear combination of linear terms and nested Lie brackets.

Exact solutions, and therefore step-invariant transforms, are not unique to right- (or left-) invariant vector fields. For example, the ODE in the variable $X \in \g$,
\begin{equation*}
	\dot X = XA - AX
\end{equation*}
has the closed-form solution~\cite[Proof of Proposition 2.2]{Elliott2009}
\begin{equation*}
	X(t) = e^{\ad_{tA}}X(0),
\end{equation*}
where $e^{\ad_{tA}} := \Id_\g + \ad_{tA} + \frac{1}{2!}\ad_{tA}^2 + \frac{1}{3!}\ad_{tA}^3 + \cdots$, which furnishes the step-invariant transform
\begin{equation*}
	X[k + 1] = e^{\ad_{TA}}X[k] = X[k] + T[A,X[k]] 
		+ \frac{T^2}{2!}[A,[A,X[k]]] + \frac{T^3}{3!}[A,[A,[A,X[k]]]] + \cdots.
\end{equation*}
%


All the sampled dynamics presented in this section are examples of Lie functions, in particular, they belong to class-$\mc A$, which we define in Section~\ref{sec:f} and is the main class of systems studied in this paper.

\subsection{Notation and Terminology}
Given a set $\mc X$, a map $x : \Int \to \mc X$ is a \emph{discrete-time signal}. The notation $x[k]$, with brackets, in contrast to parentheses, implicitly defines the discrete-time signal $x$. The notation $x$ and $x^+$ will often be used as shorthand for $x[k]$ and $x[k + 1]$, respectively, when the time index is clear or irrelevant. All vector spaces encountered are assumed to be finite dimensional. Given a vector space $\mc X$ with subspace $\mc V \subseteq \mc X$, $\mc X / \mc V$ denotes the quotient (or factor) space with cosets $\bar x := \{v \in \mc X : x - v \in \mc V\}$; we will sometimes use the notation $x + \mc V$ for this same coset. If $\mc T$ is a Cartesian product of a vector space $\mc X$ with itself $n$ times, and $\mc V \subseteq \mc X$, we will sometimes use the notation $\mc T / \mc V$ as shorthand for $\mc T / \mc V^n = \mc X^n / \mc V^n = (\mc X / \mc V)^n$. Given a linear endomorphism of vector spaces $A : \mc X \to \mc X$, let $\rho(A)$ denote its spectral radius, and $\|A\|$ denote the operator norm induced by the vector norm $\|\cdot\|$ on $\mc X$; unless stated otherwise, the choice of norm is immaterial. Given vector spaces $\mc X_1,\ldots,\mc X_n$, with respective norms $\|\cdot\|_{\mc X_1},\ldots,\|\cdot\|_{\mc X_n}$, we define the product norm on $\mc X_1 \times \cdots \times \mc X_n$ by $\|(X_1,\ldots,X_n)\| := \sum_{i = 1}^n\|X_i\|_{\mc X_i}$. Given a Lie algebra $\g$, let $[\cdot,\cdot] : \g \times \g \to \g$ denote its Lie bracket. Given two Lie subalgebras $\h_1,\h_2 \subseteq \g$, $[\h_1,\h_2] := \{[H_1,H_2] \in \g : H_1 \in \h_1, H_2 \in \h_2\}$. The symbol $0$ will be used to represent the additive identity on any vector space. A \emph{word} $\omega \in \g$ with \emph{length} $|\omega| \in \Nat$ over the $n \in \Nat$ \emph{letters} $X_1,\ldots,X_n \in \g$ is a (nested) Lie bracket $[X_{\omega_1},[X_{\omega_2},[\ldots X_{\omega_{|\omega|}}]\cdots]$, where $X_{\omega_i} \in \{X_1,\ldots,X_n\}$.

\section{Preliminaries}

We now define what it means for a Lie algebra to be solvable and nilpotent. We also state several algebraic properties of such Lie algebras used in our analysis.

\begin{definition}[Derived Series]
The \textbf{derived series} of a Lie algebra $\g$ is defined recursively by $\g_0 := \g$, $\g_{i + 1} := [\g_i,\g_i]$, for $i \geq 0$ .
\end{definition}

A consequence of the definition of $\g_i$ is that for all $i \geq 0$, $\g_i \supseteq \g_{i + 1}$.

\begin{definition}[Solvable\label{def:Solvable}]
A Lie algebra $\g$ is \textbf{solvable} if there exists a finite $v$ such that $\g_{v + 1} = 0$. The smallest such $v$ is called the \textbf{derived length} of $\g$. A Lie group is solvable if its Lie algebra is solvable.
\end{definition}

If $\g$ is solvable with derived length $v$, then for all $i \leq v$, the containment $\g_i \supset \g_{i + 1}$ is strict.

\begin{definition}[Lower Central Series\label{def:LCS}]
The \textbf{lower central series} of a Lie algebra $\g$ is defined recursively by $\g^{(1)} := \g$, $\g^{(i + 1)} := [\g^{(i)},\g]$, for $i \geq 1$.
\end{definition}

There are two important consequences of Definition~\ref{def:LCS}: the algebras of the lower central series $\g^{(i)}$ are ideals, and for all $i \geq 1$, $\g^{(i)} \supseteq \g^{(i + 1)}$.

\begin{definition}[Nilpotent]
A Lie algebra $\g$ is nilpotent if there exists a finite $p$ such that $\g^{(p + 1)} = 0$. The smallest such $p$ is called the \textbf{nilindex} of $\g$. A Lie group is nilpotent if its Lie algebra is nilpotent.
\end{definition}

The property that serves as the foundation of our analysis, is that if $\g$ is nilpotent, then
\begin{equation*}
	\g^{(1)} \supset \g^{(2)} \supset \cdots \supset \g^{(p)} \supset \g^{(p + 1)} = 0.
\end{equation*}

\begin{theorem}[{\cite[Lemma 1.1.1]{corwin2004representations}\label{thm:StronglyCentral}}]
The ideals of the lower central series of a Lie algebra $\g$ satisfy
%
	$[\g^{(i)},\g^{(j)}] \subseteq \g^{(i + j)}$.
\end{theorem}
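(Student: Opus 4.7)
The plan is to prove the inclusion by induction on $j$, with the statement taken to hold uniformly in $i \geq 1$. The base case $j = 1$ is immediate: $[\g^{(i)}, \g^{(1)}] = [\g^{(i)}, \g] = \g^{(i+1)}$ by Definition~\ref{def:LCS}.

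For the inductive step, assume $[\g^{(i)}, \g^{(j)}] \subseteq \g^{(i+j)}$ for all $i \geq 1$. I would show $[\g^{(i)}, \g^{(j+1)}] \subseteq \g^{(i+j+1)}$ by checking the inclusion on generators. A generator of $[\g^{(i)}, \g^{(j+1)}]$ has the form $[X,[Y,Z]]$ with $X \in \g^{(i)}$, $Y \in \g^{(j)}$, $Z \in \g$. Applying the Jacobi identity gives
\begin{equation*}
[X,[Y,Z]] = [Y,[X,Z]] + [[X,Y],Z].
\end{equation*}

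For the first summand, $[X,Z] \in [\g^{(i)}, \g] = \g^{(i+1)}$ by definition, so $[Y,[X,Z]] \in [\g^{(j)}, \g^{(i+1)}]$, which by the inductive hypothesis (applied with $i+1$ in place of $i$) is contained in $\g^{(i+j+1)}$. For the second summand, $[X,Y] \in [\g^{(i)}, \g^{(j)}] \subseteq \g^{(i+j)}$ by the inductive hypothesis, so $[[X,Y],Z] \in [\g^{(i+j)}, \g] = \g^{(i+j+1)}$ by definition. Both terms lie in $\g^{(i+j+1)}$, completing the step.

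The only real subtlety, and the place I would be most careful, is that a generic element of $\g^{(j+1)}$ is a finite sum of generators $[Y,Z]$ rather than a single bracket, so the argument above must be extended by bilinearity of $[\cdot,\cdot]$; since $\g^{(i+j+1)}$ is a linear subspace (indeed an ideal, per the remark following Definition~\ref{def:LCS}), this extension is automatic. The approach is essentially the standard Jacobi-identity bookkeeping, with the slight trick being to phrase the induction so that the hypothesis can be invoked at two different index pairs, $(i+1,j)$ and $(i,j)$, within the same step.
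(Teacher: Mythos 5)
Your proof is correct and is precisely the standard argument for this fact; the paper itself states the result without proof, citing Corwin--Greenleaf, and the cited proof is the same induction on $j$ via the Jacobi identity $[X,[Y,Z]] = [Y,[X,Z]] + [[X,Y],Z]$, with the inductive hypothesis invoked at the two index pairs exactly as you do. Your closing remark about extending from single brackets to finite sums by bilinearity is the right thing to flag, since $\g^{(j+1)}$ is the span of such generators.
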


Although Definition~\ref{def:Solvable} is the formal definition of solvability, it is the structure endowed by the following theorem that will be leveraged in our analysis.\footnote{If $\h \subseteq \g$ is a nilpotent ideal such that $\h \supseteq [\g,\g]$, then for all $i \geq 2$, $\h^{(i)} \subseteq [\g,\g]^{(i)}$.}

\begin{theorem}[{\cite[p. 9, Corollary 3]{Onishchik1994}}\label{thm:DgNilpotent}]
A Lie algebra $\g$ over $\Complex$ or $\Real$ is solvable if and only if its \textbf{derived algebra} $[\g,\g]$ is nilpotent.
\end{theorem}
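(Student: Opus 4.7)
The plan is to prove the two implications separately, with the ``if'' direction being elementary and the ``only if'' direction resting on Lie's theorem.

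For the easy direction, suppose $[\g,\g]$ is nilpotent with nilindex $p$, so that $[\g,\g]^{(p+1)} = 0$. I would first establish the general fact that any nilpotent Lie algebra is solvable: if $\h^{(p+1)} = 0$, then an induction using Theorem~\ref{thm:StronglyCentral} gives $\h_i \subseteq \h^{(2^i)}$, because $\h_1 = \h^{(2)}$ and $\h_{i+1} = [\h_i, \h_i] \subseteq [\h^{(2^i)}, \h^{(2^i)}] \subseteq \h^{(2^{i+1})}$. Hence the derived series of $\h$ terminates once $2^i \geq p+1$. Applying this to $\h = [\g,\g]$, the derived series of $[\g,\g]$ terminates; since $\g_1 = [\g,\g]$ and $\g_{i+1}$ for $i \geq 1$ coincides with the $i$-th term of the derived series of $[\g,\g]$, the derived series of $\g$ terminates as well, so $\g$ is solvable.

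For the hard direction, I would work first over $\Complex$ and then complexify to handle the real case. Assume $\g$ is solvable. Consider the adjoint representation $\ad : \g \to \mathfrak{gl}(\g)$; its image $\ad(\g)$ is a solvable Lie subalgebra of $\mathfrak{gl}(\g)$. Invoking Lie's theorem, there exists a basis of $\g$ with respect to which every $\ad_X$, $X \in \g$, is upper triangular. For any $X, Y \in \g$ we have $\ad_{[X,Y]} = [\ad_X, \ad_Y]$, a commutator of upper triangular matrices, which is strictly upper triangular and therefore nilpotent; by linearity, $\ad_Z$ is nilpotent for every $Z \in [\g,\g]$. Since $[\g,\g]$ is an ideal, the restriction of $\ad_Z$ to $[\g,\g]$ is also nilpotent for every $Z \in [\g,\g]$. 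Engel's theorem then yields that $[\g,\g]$ is a nilpotent Lie algebra. For the real case, set $\g_\Complex := \g \otimes_\Real \Complex$; both the derived series and the lower central series commute with this tensor product, so solvability passes to $\g_\Complex$, the complex case gives nilpotency of $[\g_\Complex, \g_\Complex] = [\g,\g]_\Complex$, and this descends to nilpotency of $[\g,\g]$.

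The main obstacle is Lie's theorem, whose proof requires an induction on $\dim \g$ together with the existence of eigenvectors over an algebraically closed field, justifying the complexification step. Given Lie's theorem and Engel's theorem, the deductions above are essentially formal; without Lie's theorem the claim that $\ad_X$ is nilpotent for $X \in [\g,\g]$ is not at all obvious from solvability alone.
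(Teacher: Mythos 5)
Your proof is correct: the easy direction via the containment $\h_i \subseteq \h^{(2^i)}$ (using Theorem~\ref{thm:StronglyCentral}) together with the observation that the derived series of $\g$ past the first step is the derived series of $[\g,\g]$, and the hard direction via Lie's theorem, strict upper-triangularity of $\ad_Z$ for $Z \in [\g,\g]$, Engel's theorem, and complexification for the real case, is exactly the classical argument. The paper does not prove this statement at all --- it is quoted with a citation to Onishchik--Vinberg --- so there is no internal proof to compare against; your argument matches the standard one in the cited reference.
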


In the proofs of our main results, we examine the quotient dynamics on the quotient spaces modulo the ideals of the lower central series. To that end, we require the notion of canonical projection.

\begin{definition}[Canonical Projection]
Let $\mc X$ be a vector space with subspace $\mc V \subseteq \mc X$. The \textbf{canonical projection} of $\mc X$ onto $\mc V$ is the unique linear map $P : \mc X \to \mc X / \mc V$, $x \mapsto x + \mc V$.
\end{definition}

%
%

\begin{proposition}[{\cite[\S$0.7$]{Wonham1979}}\label{prop:QuotientMap}]
Given a linear map $A : \mc X \to \mc X$ and an $A$-invariant subspace $\mc V \subseteq \mc X$, i.e., $A\mc V \subseteq \mc V$, there exists a unique linear map $\bar A : \mc X / \mc V \to \mc X / \mc V$ such that the following diagram commutes.
\begin{equation*}
	\xymatrix{\mc X \ar[d]_P \ar[r]^A 	&	\mc X \ar[d]^P \\
			\mc X / \mc V \ar[r]_{\bar A}	&	\mc X / \mc V}
\end{equation*}
\end{proposition}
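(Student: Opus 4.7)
The plan is to construct $\bar A$ explicitly using the only formula compatible with commutativity of the diagram, then check it is well-defined, linear, and unique.

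First I would define $\bar A : \mc X / \mc V \to \mc X / \mc V$ by the rule $\bar A(x + \mc V) := A(x) + \mc V$ for every $x \in \mc X$. Any map satisfying $\bar A \circ P = P \circ A$ is forced to obey this formula, which immediately establishes uniqueness once existence is shown; so the real work is showing the formula actually defines a function.

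Next I would verify that $\bar A$ is well-defined. Suppose $x + \mc V = y + \mc V$ in $\mc X / \mc V$, so that $x - y \in \mc V$. By linearity of $A$ we have $A(x) - A(y) = A(x - y)$, and by $A$-invariance of $\mc V$ this lies in $A\mc V \subseteq \mc V$. Therefore $A(x) + \mc V = A(y) + \mc V$, so the coset $\bar A(x + \mc V)$ depends only on the coset $x + \mc V$, not on the chosen representative.

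Linearity of $\bar A$ would then follow directly from linearity of $A$ and the definition of the vector-space operations on $\mc X / \mc V$: for $\alpha,\beta$ scalars and $x,y \in \mc X$,
\begin{equation*}
  \bar A\bigl(\alpha(x+\mc V)+\beta(y+\mc V)\bigr) = \bar A\bigl((\alpha x+\beta y)+\mc V\bigr) = A(\alpha x+\beta y)+\mc V = \alpha\bar A(x+\mc V)+\beta\bar A(y+\mc V).
\end{equation*}
Commutativity of the diagram is then immediate from the definition: $\bar A(P(x)) = \bar A(x+\mc V) = A(x)+\mc V = P(A(x))$.

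There is no real obstacle here; the only substantive point is the well-definedness step, and that is precisely where the hypothesis $A\mc V \subseteq \mc V$ is used. Everything else is a direct transcription of the coset definitions, and uniqueness is forced by the commuting square once we know $P$ is surjective.
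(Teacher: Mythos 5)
Your proof is correct and complete: the paper itself gives no proof of this proposition (it is cited from Wonham), and your argument --- defining $\bar A(x+\mc V):=A(x)+\mc V$, using $A\mc V\subseteq\mc V$ for well-definedness, and deducing uniqueness from surjectivity of $P$ --- is exactly the standard construction the cited reference uses. No gaps.
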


The map $\bar A$ in Proposition~\ref{prop:QuotientMap} is called the \emph{map induced in $\mc X / \mc V$ by $A$}, or in short, the \emph{induced map}.

\begin{lemma}\label{lem:CompProj}
Let $\mc X$ be a vector space with subspace $\mc V \subseteq \mc X$. Let $P : \mc X \to \mc X / \mc V$ be the canonical projection, and $\imath : \mc X / \mc V \to \mc X$ be a right-inverse of $P$. Then $(\Id_{\mc X} - \imath \circ P)\mc X \subseteq \mc V$.
\end{lemma}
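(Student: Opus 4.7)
The plan is short: show that every element of the form $x - (\imath \circ P)(x)$ lies in the kernel of the canonical projection $P$, and then observe that $\ker P = \mc V$.

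First I would pick an arbitrary $x \in \mc X$ and set $y := (\Id_{\mc X} - \imath \circ P)x = x - \imath(P(x))$. Applying the linear map $P$ to $y$ and using that $\imath$ is a right-inverse of $P$, i.e.\ $P \circ \imath = \Id_{\mc X / \mc V}$, yields
\begin{equation*}
    P(y) = P(x) - P(\imath(P(x))) = P(x) - (P \circ \imath)(P(x)) = P(x) - P(x) = 0.
\end{equation*}

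Next I would invoke the definition of the canonical projection: $P(y) = y + \mc V = 0 + \mc V$ in $\mc X / \mc V$ is equivalent to $y \in \mc V$. Thus $\ker P = \mc V$, and so $y \in \mc V$. Since $x$ was arbitrary, $(\Id_{\mc X} - \imath \circ P)\mc X \subseteq \mc V$, which is the desired inclusion.

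There is really no obstacle here; the result is just the statement that the image of $\Id - \imath \circ P$ sits inside $\ker P$, which follows immediately from the right-inverse property. The only point to be careful about is keeping the order $P \circ \imath$ (not $\imath \circ P$) straight when applying the right-inverse identity.
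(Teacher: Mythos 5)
Your proof is correct and follows essentially the same route as the paper's: both compute $P \circ (\Id_{\mc X} - \imath \circp P)$ — wait, more precisely, both use $P \circ \imath = \Id_{\mc X/\mc V}$ to show the image of $\Id_{\mc X} - \imath \circ P$ lies in $\Ker P = \mc V$; you just phrase it pointwise while the paper writes it as an operator identity. No further comment needed.
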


\begin{proof}
	$P(\Id_{\mc X} - \imath \circ P)
	= P - P \circ \imath \circ P
	= P - P = 0$,
which implies $(\Id_{\mc X} - \imath \circ P)\mc X \subseteq \Ker P$.
\end{proof}


\begin{definition}[Quotient Norm\label{def:QuotientNorm}]
Given a vector space $\mc X$ with norm $\|\cdot\|$ and subspace $\mc V \subseteq \mc X$, if $x \in \mc X$, then the \textbf{quotient norm} of the coset $x + \mc V$ is
\begin{equation*}
	\|x + \mc V\|_{\mc X / \mc V} := \inf_{v \in \mc V}\|x + v\|.
\end{equation*}
\end{definition}

The following result is an obvious consequence of Definition~\ref{def:QuotientNorm}. We formally state it because it is important in the proofs of our main results.

\begin{lemma}\label{lem:QuotientInequality}
Let $\mc X$ be a normed vector space with subspaces $\mc V_1$ and $\mc V_2$, such that $\mc V_1 \subseteq \mc V_2$. For all $x \in \mc X$, we have $\|x + \mc V_2\|_{\mc X / \mc V_2} \leq \|x + \mc V_1\|_{\mc X / \mc V_1} \leq \|x\|$.
\end{lemma}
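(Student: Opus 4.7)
The plan is to unwind both inequalities directly from Definition~\ref{def:QuotientNorm}, observing that each is an instance of the principle that taking an infimum over a larger set can only decrease (or leave unchanged) the value.

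For the right inequality, I would note that $0 \in \mc V_1$ (every subspace contains the additive identity), so the element $x = x + 0$ is in the set $\{x + v : v \in \mc V_1\}$ over which we infimize. Hence $\|x + \mc V_1\|_{\mc X / \mc V_1} = \inf_{v \in \mc V_1}\|x + v\| \leq \|x + 0\| = \|x\|$.

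For the left inequality, the hypothesis $\mc V_1 \subseteq \mc V_2$ implies $\{x + v : v \in \mc V_1\} \subseteq \{x + v : v \in \mc V_2\}$, so the infimum over the (possibly) larger set satisfies
\begin{equation*}
\|x + \mc V_2\|_{\mc X / \mc V_2} = \inf_{v \in \mc V_2}\|x + v\| \leq \inf_{v \in \mc V_1}\|x + v\| = \|x + \mc V_1\|_{\mc X / \mc V_1}.
\end{equation*}
Chaining these two inequalities yields the claim.

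There is no real obstacle here: the lemma is a direct consequence of the definition, and the only subtlety is remembering that $0$ lies in every subspace for the rightmost bound. I would keep the write-up to two short displayed calculations so the reader sees both steps explicitly and can refer back to this bound when it is used in the main stability proofs.
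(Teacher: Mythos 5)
Your proof is correct and is exactly the argument the paper has in mind — the paper states this lemma without proof, calling it an obvious consequence of Definition~\ref{def:QuotientNorm}, and your two observations (that $0 \in \mc V_1$ gives the right inequality, and that infimizing over the larger set $\mc V_2 \supseteq \mc V_1$ gives the left) are precisely that consequence spelled out.
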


The following result is elementary, but we state and prove it for completeness, and will use it in our analysis.
\begin{proposition}\label{prop:Pnorm}
Let $\mc X$ be a vector space with norm $\|\cdot\|$, and let $\mc V \subseteq \mc X$ be a subspace. If the quotient norm is used on $\mc X / \mc V$, then the canonical projection $P : \mc X \to \mc X / \mc V$ has unit norm.
\end{proposition}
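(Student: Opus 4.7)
The plan is to bound $\|P\|$ above and below by $1$. The operator norm is
\begin{equation*}
\|P\| = \sup_{x \in \mc X \setminus \{0\}} \frac{\|Px\|_{\mc X / \mc V}}{\|x\|}.
\end{equation*}

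For the upper bound, I would observe directly from Definition~\ref{def:QuotientNorm} that for any $x \in \mc X$,
\begin{equation*}
\|Px\|_{\mc X / \mc V} = \|x + \mc V\|_{\mc X / \mc V} = \inf_{v \in \mc V} \|x + v\| \leq \|x + 0\| = \|x\|,
\end{equation*}
so $\|P\| \leq 1$. This is a one-line argument using only the definition.

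For the matching lower bound, the plan is to exhibit a sequence along which the ratio $\|Px\|_{\mc X/\mc V}/\|x\|$ tends to $1$. Assuming $\mc V \subsetneq \mc X$ (the degenerate case $\mc V = \mc X$ gives the trivial space and should be excluded from the statement), fix any $x_0 \in \mc X \setminus \mc V$, so that $\|x_0 + \mc V\|_{\mc X/\mc V} > 0$. By definition of the infimum, there exists a sequence $\{v_n\} \subseteq \mc V$ with $\|x_0 + v_n\| \to \|x_0 + \mc V\|_{\mc X/\mc V}$. Setting $x_n := x_0 + v_n$, the coset $P x_n = x_0 + \mc V$ is the same for every $n$, hence
\begin{equation*}
\frac{\|P x_n\|_{\mc X/\mc V}}{\|x_n\|} = \frac{\|x_0 + \mc V\|_{\mc X/\mc V}}{\|x_0 + v_n\|} \longrightarrow 1,
\end{equation*}
which forces $\|P\| \geq 1$.

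I do not anticipate any real obstacle: the only subtlety is the degenerate case $\mc V = \mc X$, which should be ruled out by implicit convention (otherwise the quotient norm is identically zero and $\|P\| = 0$).
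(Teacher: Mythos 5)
Your proposal is correct and follows essentially the same route as the paper: the upper bound is the identical one-line estimate $\inf_{v}\|x+v\| \leq \|x\|$, and your lower bound via a minimizing sequence of representatives of a coset $x_0 + \mc V$ with $x_0 \notin \mc V$ is just a reformulation of the paper's step of applying $\|Px\|_{\mc X/\mc V} = \|P(x+v)\|_{\mc X/\mc V} \leq \|P\|\|x+v\|$ and taking the infimum over $v$. Your explicit handling of the degenerate case $\mc V = \mc X$ (and the implicit use of $\|x_0+\mc V\|_{\mc X/\mc V} > 0$, valid here since all spaces are finite dimensional) is a minor point the paper leaves tacit.
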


\begin{proof}
Beginning with the definition of operator norm, we have
\begin{equation*}
\begin{aligned}
	\|P\| :&= \max_{\|x\| = 1}\inf_{v \in \mc V}\|x + v\| \\
	&\leq \max_{\|x\| = 1}\inf_{v \in \mc V}\{\|x\| + \|v\|\} \\
	&= \max_{\|x\| = 1}\|x\| \\
	&= 1,
\end{aligned}
\end{equation*}
which establishes an upper bound of $1$.

Consider a vector $x \in \{x \in \mc X : x \notin \mc V\}$. Then for all $v \in \mc V$
\begin{equation*}
\begin{aligned}
	&& \|Px\|_{\mc X / \mc V} = \|P(x + v)\|_{\mc X / \mc V} &\leq \|P\|\|x + v\| \\
	\implies& & \|Px\|_{\mc X / \mc V} &\leq \|P\|\underbrace{\inf_{v \in \mc V}\|x + v\|}_{\|Px\|_{\mc X / \mc V}} \\
	\iff& &1 &\leq \|P\|.
\end{aligned}
\end{equation*}
Thus, $1 \leq \|P\| \leq 1$, so $\|P\| = 1$.
\end{proof}

%

%

%
\begin{theorem}[{\cite[\S $7$]{Desoer1975}}\label{thm:Anorm}]
Given a linear map $A : \mc X \to \mc X$ and a constant $\varepsilon > 0$, there exists a vector norm $\|\cdot\| : \mc X \to \Real$ such that the induced operator norm satisfies $\|A\| < \rho(A) + \varepsilon$.
\end{theorem}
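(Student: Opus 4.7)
The plan is to build the norm explicitly by pulling back a standard norm under a change of basis that brings $A$ close to its diagonal part. Working first over $\Complex$ (the real case is handled separately below), pick an invertible $P$ such that $J := P^{-1}AP$ is in Jordan canonical form, and decompose $J = \Lambda + N$, where $\Lambda$ is diagonal with entries equal to the eigenvalues of $A$, and $N$ is nilpotent whose only nonzero entries are $1$'s on those first-superdiagonal positions that link consecutive elements within a common Jordan block.

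Introduce a diagonal similarity $D_\delta := \mathrm{diag}(1,\delta,\delta^2,\ldots,\delta^{n-1})$ for a parameter $\delta > 0$ to be chosen. A direct index calculation gives $(D_\delta^{-1}ND_\delta)_{ij} = \delta^{j-i}N_{ij}$, so that $D_\delta^{-1}JD_\delta = \Lambda + \delta\tilde N$ with $\tilde N$ supported on the first superdiagonal and having entries in $\{0,1\}$. Consequently each row of $\Lambda + \delta\tilde N$ has absolute sum at most $\rho(A) + \delta$.

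Now define $\|x\| := \|D_\delta^{-1}P^{-1}x\|_\infty$, the pullback of the max-norm under the invertible linear map $D_\delta^{-1}P^{-1}$; this is a norm on $\mc X$. Because operator norms transform covariantly under change of basis, the operator norm of $A$ induced by $\|\cdot\|$ equals the $\infty$-induced operator norm of $D_\delta^{-1}JD_\delta$, namely its maximum absolute row sum, which by the previous step is at most $\rho(A) + \delta$. Choosing any $\delta \in (0,\varepsilon)$ yields the desired strict inequality.

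If $\mc X$ is a real space and $A$ has non-real eigenvalues, one either repeats the argument with the real Jordan form — where each complex conjugate eigenvalue pair contributes a real $2\times 2$ rotation-scaling block, so the scaling is organised in blocks of size two and one uses the Euclidean norm within each block — or complexifies $\mc X$, builds the norm above on the complexification, and restricts it to $\mc X$, using that $\rho(A)$ is unchanged under complexification. The whole argument reduces to the diagonal scaling trick, so no step presents a genuine obstacle; the most delicate point is just ensuring strict (rather than weak) inequality, which is why $\delta$ is retained as a free parameter until the final step.
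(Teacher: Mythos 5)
Your proof is correct and is the standard argument for this classical result: Jordan form, the diagonal similarity $D_\delta$ to scale the superdiagonal down to $\delta$, pullback of the $\infty$-norm, and the maximum-row-sum characterization of the induced norm, with the real case handled by complexification or the real Jordan form. The paper does not prove this theorem at all --- it simply cites it from the reference --- and your argument is essentially the one found in the cited literature, so there is nothing further to reconcile.
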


\begin{remark}\label{rem:Norm}
Given a Lie algebra $\g$ with norm $\|\cdot\|$, there exists $\mu \in [0,2]$, such that for all $X, Y \in \g$, $\|[X,Y]\| \leq \mu\|X\|\|Y\|$.

The lower bound of $0$ holds when $\g$ is commutative, and the upper bound of $2$ is verified by the triangle inequality and submultiplicativity of induced norms:
\begin{equation*}
	\|[X,Y]\| = \|XY - YX\| \leq \|X\|\|Y\| + \|Y\|\|X\| = 2\|X\|\|Y\|.
\end{equation*}

The constant $\mu$ is not necessarily either $0$ or $2$. For example, if $\g$ is any matrix Lie algebra equipped with the Frobenius norm, then $\mu = \sqrt{2}$~\cite[Theorem 2.2]{Bottcher2008}.
\end{remark}

\section{The Class of Systems}\label{sec:f}


\begin{definition}[Lie Element]
Let $X_1,\ldots,X_n$ be the free generators of a Lie algebra $\g$. The elements $X_1,\ldots,X_n$ are called \textbf{Lie elements} of degree one. The Lie brackets $[X_i,X_j]$ are Lie elements of degree two, $[X_i,[X_j,X_k]]$ Lie elements of degree three, and so forth. Any linear combination of Lie elements -- not necessarily finite -- with complex coefficients is also a Lie element.
\end{definition}

\begin{definition}
A function $f : \g^n \to \g$ is a \textbf{Lie function} if there exists open $U \subseteq \g^n$ such that for all $X \in U$, $f(X)$ is a Lie element.
\end{definition}

If $f_1,\ldots,f_m$ are Lie functions, whose scalar coefficients of the word $\omega$ are respectively $c^1_\omega,\ldots,c^m_\omega \in \Field$, where $\Field$ is $\Complex$ or $\Real$, then
\begin{equation*}
	f(X_1,\ldots,X_n) := \begin{bmatrix}f_1(X_1,\ldots,X_n) \\ \vdots \\ f_m(X_1,\ldots,X_n)\end{bmatrix}
	= \begin{bmatrix}\sum_\omega c^1_\omega\omega \\ \vdots \\ \sum_\omega c^m_\omega\omega\end{bmatrix}
	= \sum_\omega\begin{bmatrix}c^1_\omega \\ \vdots \\ c^m_\omega\end{bmatrix} \otimes \omega,
\end{equation*}
which we write compactly as
\begin{equation}\label{eq:fA}
	f(X) = \sum_\omega c_\omega \otimes \omega.
\end{equation}

Given $f : \g^n \to \g$, the following theorem can be used to test whether it is a Lie function.

\begin{theorem}[{Friedrichs' Theorem~\cite[Theorem 1]{Magnus1953}}]
A map $f : \g^n \to \g$ is a Lie function if and only if, for all $X_1,\ldots,X_n,Y_1,\ldots,Y_n \in \g$ such that for all $i \neq j$, $[X_i,Y_j] = 0$,
\begin{equation*}
	f(X_1 + Y_1,\ldots,X_n + Y_n) = f(X_1,\ldots,X_n) + f(Y_1,\ldots,Y_n).
\end{equation*}
\end{theorem}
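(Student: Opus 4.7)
The plan is to prove the two directions separately, with the forward direction reducing to an induction on bracket depth and the reverse direction constituting the substantive content of the theorem.

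For the forward direction, by linearity of $f$ it suffices to verify bi-additivity for a single bracket word $\omega$. I would induct on the length $|\omega|$. The base case $|\omega| = 1$, i.e., $\omega = X_i$, is immediate. In the inductive step, write $\omega = [X_{\omega_1}, \omega']$ and expand
\begin{equation*}
\omega(X + Y) = [X_{\omega_1} + Y_{\omega_1}, \omega'(X+Y)]
\end{equation*}
using the bilinearity of $[\cdot,\cdot]$ and the inductive hypothesis on $\omega'$. This yields $\omega(X) + \omega(Y) + [X_{\omega_1}, \omega'(Y)] + [Y_{\omega_1}, \omega'(X)]$, and the two cross terms vanish because, under the commutation hypothesis, each letter $X_i$ commutes with every iterated bracket built from the $Y_j$, and vice versa.

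For the reverse direction, I would pass to the free associative algebra $A = \Field\langle X_1, \ldots, X_n\rangle$ and introduce the coproduct $\Delta : A \to A \otimes A$ determined on generators by $\Delta(X_i) := X_i \otimes 1 + 1 \otimes X_i$ and extended as an algebra homomorphism. Viewing $Y_i$ as $1 \otimes X_i$ makes the commutation hypothesis automatic inside $A \otimes A$ and recasts the bi-additivity identity as $\Delta(f) = f \otimes 1 + 1 \otimes f$; in other words, $f$ is a \emph{primitive element} of the Hopf algebra $(A, \Delta)$. Friedrichs' theorem then reduces to the classical fact that the primitives of $A$ under this coproduct coincide with the Lie subalgebra generated by $X_1, \ldots, X_n$, which in turn transfers back, via the universal property of the free Lie algebra, to the assertion that $f$ defines a Lie function on $\g^n$.

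The main obstacle is this last fact -- that every primitive element of $A$ is a Lie polynomial. I would establish it either by invoking the Poincar\'e-Birkhoff-Witt theorem, identifying $A$ with the universal enveloping algebra of the free Lie algebra on $X_1, \ldots, X_n$ and using that the primitives of a cocommutative connected Hopf algebra in characteristic zero are precisely the generating Lie algebra, or by an explicit combinatorial argument using the Dynkin idempotent $\pi : A \to A$, which acts as multiplication by $k$ on homogeneous Lie polynomials of degree $k$ and can therefore be used to extract a Lie polynomial representative from any primitive element.
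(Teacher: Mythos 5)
The paper does not actually prove this statement; it is imported verbatim from Magnus~\cite[Theorem 1]{Magnus1953} as a known result, so there is no in-paper argument to compare yours against. Your outline is the standard modern proof of Friedrichs' criterion and is essentially sound. The forward direction works as you describe: by Jacobi, the centralizer of each $X_i$ is a Lie subalgebra, so if $X_i$ commutes with every $Y_j$ it commutes with every iterated bracket in the $Y_j$, and the cross terms in your induction vanish. The reverse direction correctly recasts bi-additivity as primitivity of $f$ under the coproduct $\Delta(X_i) = X_i \otimes 1 + 1 \otimes X_i$ on the free associative algebra, and the key fact that primitives are exactly Lie polynomials is available by either of the routes you name (PBW/Milnor--Moore, or the Dynkin idempotent); both are legitimate here since $\Field$ is $\Real$ or $\Complex$, hence characteristic zero. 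Since the reduction is graded and can be run degree by degree, it also accommodates the infinite linear combinations permitted by the paper's definition of a Lie element.

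One point of friction with the statement as printed: the commutation hypothesis is quantified only over $i \neq j$, but your forward direction --- and the tensor-product model $Y_i = 1 \otimes X_i$, in which $X_i \otimes 1$ and $1 \otimes X_i$ commute for \emph{all} pairs of indices --- silently uses $[X_i,Y_i] = 0$ as well. With only the off-diagonal commutations the ``only if'' direction fails: for $f = [X_1,[X_1,X_2]]$ the defect $f(X+Y) - f(X) - f(Y)$ reduces via Jacobi to $[[X_1,Y_1],Y_2] + [[Y_1,X_1],X_2]$, which is not forced to vanish when $[X_1,Y_1]$ is unconstrained. The quantifier should read ``for all $i,j$,'' which is how Magnus states the theorem and is what your argument actually establishes; this is a defect of the transcription in the paper rather than a gap in your proof, but you should state explicitly which version you are proving.
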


We consider systems whose dynamical maps are Lie functions, but we also impose that they enjoy a strong form of convergence, as characterized in the following definition.

\begin{definition}[Class-$\mc A$ Function\label{def:Analaytic}]
A Lie function $f : \g^n \to \g$ belongs to \textbf{class}-$\mc A$ -- which we write as $f \in \mc A$ -- if there exists a neighbourhood of the origin in $\g^n$ where the series representation of $f$ satisfies the strong absolute convergence property:
\begin{equation}\label{eq:MyConvergence}
	\sum_\omega \mu^{|\omega| - 1}|c_\omega|\|X_{\omega_1}\|\cdots\|X_{\omega_{|\omega|}}\| < \infty.
\end{equation}

A product map $f_1 \times \cdots \times f_m : \g^{n_1} \times \cdots \times \g^{n_m} \to \g^m$ belongs to class-$\mc A$ if each component map belongs to class-$\mc A$.
\end{definition}

\begin{remark}
Property~\eqref{eq:MyConvergence}, enjoyed by $f \in \mc A$, is stronger than absolute convergence, i.e., $\sum_\omega |c_\omega|\|\omega\| < \infty$, since
%
	$\|\omega\| \leq \mu^{|\omega| - 1}\|X_{\omega_1}\|\cdots\|X_{\omega_{|\omega|}}\|$.
\end{remark}

\begin{remark}
By the Baker-Campbell-Hausdorff formula, we have that the map $\Log(\exp(X)\exp(Y))$ belongs to class-$\mc A$:
\begin{equation}\label{eq:BCH}
	\Log(\exp(X)\exp(Y)) =X + Y + \frac{1}{2}[X,Y] + \frac{1}{12}[X,[X,Y]] + \frac{1}{12}[Y,[Y,X]] + \cdots.
\end{equation}
To see that~\eqref{eq:BCH} satisfies~\eqref{eq:MyConvergence}, refer to~\cite[Proof of Theorem 8]{Day1991} or~\cite{Blanes2004}, and the references therein.
\end{remark}

\begin{remark}
That $\Log(\exp(X)\exp(Y))$ belongs to class-$\mc A$ means that the sampled-data dynamics of a system on a matrix Lie group of the form~\eqref{eq:GlobalACommutes} have local dynamics that are class-$\mc A$, which, as discussed in the Introduction, motivates the study of this class of systems.
\end{remark}

\begin{proposition}\label{prop:NormClassA}
If the product map~\eqref{eq:fA} belongs to class-$\mc A$, then
\begin{equation*}
	\sum_\omega\mu^{|\omega| - 1}\|c_\omega\|\|X_{\omega_1}\|\cdots\|X_{\omega_{|\omega|}}\| < \infty.
\end{equation*}
\end{proposition}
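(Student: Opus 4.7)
The plan is to peel open the two definitions involved and then swap the order of summation. By the definition of a class-$\mc A$ product map, each component $f_i$ individually belongs to class-$\mc A$, so for each $i \in \{1,\ldots,m\}$,
\[
S_i := \sum_\omega \mu^{|\omega| - 1}|c^i_\omega|\|X_{\omega_1}\|\cdots\|X_{\omega_{|\omega|}}\| < \infty.
\]
The quantity $\|c_\omega\|$ in the statement refers to the norm on $\Field^m$; using the product norm on $\Field^m$ from the Notation subsection (which, since the norm on $\Field$ is the absolute value, reduces to $\|c_\omega\| = \sum_{i=1}^m |c^i_\omega|$), each summand of the target series decomposes as
\[
\mu^{|\omega|-1}\|c_\omega\|\|X_{\omega_1}\|\cdots\|X_{\omega_{|\omega|}}\| \;=\; \sum_{i=1}^m \mu^{|\omega|-1}|c^i_\omega|\|X_{\omega_1}\|\cdots\|X_{\omega_{|\omega|}}\|.
\]

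Next I would invoke Tonelli's theorem (or, equivalently, the fact that double sums of non-negative terms may be re-ordered) to interchange the outer sum over $\omega$ with the finite sum over $i$, yielding
\[
\sum_\omega \mu^{|\omega|-1}\|c_\omega\|\|X_{\omega_1}\|\cdots\|X_{\omega_{|\omega|}}\| \;=\; \sum_{i=1}^m S_i.
\]
Since this is a finite sum of finite quantities, the conclusion is immediate.

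The last thing to check is that the result does not depend on the particular norm chosen on the coefficient space $\Field^m$: by equivalence of norms on a finite-dimensional vector space, any other norm $\|\cdot\|_\star$ satisfies $\|c_\omega\|_\star \leq C\|c_\omega\|$ for some $C > 0$ independent of $\omega$, so the series obtained with $\|\cdot\|_\star$ is bounded by $C\sum_i S_i < \infty$. There is no real obstacle here; the work is purely bookkeeping, and the only subtlety worth flagging explicitly is that the interchange of summations is justified because every term is non-negative.
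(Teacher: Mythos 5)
Your proof is correct and follows essentially the same route as the paper: sum the componentwise class-$\mc A$ bounds over $i$ to get the $1$-norm version, then invoke equivalence of norms on $\Field^m$. The only difference is that you explicitly justify the interchange of the (finite) sum over $i$ with the sum over $\omega$ via non-negativity, which the paper leaves implicit.
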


\begin{proof}
By definition, $f \in \mc A$ implies $f_i \in \mc A$, which means that for all $i \in \{1,\ldots,m\}$,
\begin{equation}\label{eq:fiSum}
	\sum_\omega\mu^{|\omega| - 1}|c^i_\omega|\|X_{\omega_1}\|\cdots\|X_{\omega_{|\omega|}}\| < \infty.
\end{equation}

Summing~\eqref{eq:fiSum} over $1 \leq j \leq m$:
\begin{equation*}
	\sum_\omega\mu^{|\omega| - 1}\|c_\omega\|_1\|X_{\omega_1}\|\cdots\|X_{\omega_{|\omega|}}\|< \infty,
\end{equation*}
where $\|\cdot\|_1$ is the $1$-norm. On a finite dimensional vector space, all norms are equivalent, so this summation differs from that in the proposition by at most a constant, finite factor.
\end{proof}

If the Lie algebra $\g$ is nilpotent, then only finitely many words are nonzero; consequently~\eqref{eq:FullPower} trivially satisfies the class-$\mc A$ convergence property~\eqref{eq:MyConvergence} globally. We now impose the major structural assumption on the class of systems~\eqref{eq:sys} under consideration.

\begin{assumption}\label{asm:f}
The function $f : \mc X \times \mc W \to \mc X$ in~\eqref{eq:sys} enjoys the following properties:
\begin{enumerate}
	\item $f$ belongs to class-$\mc A$;\label{asm:ClassA}
	\item \label{asm:vanish} the origin of the state-space $\mc X$ is a unique equilibrium,
		\begin{equation*}
			f(X,\mc W) = 0 \iff X = 0;
		\end{equation*}
	\item \label{asm:invariance} there exists an ideal $\h \subseteq \g$ with nilindex $p$, such that $\h \supseteq [\g,\g]$, each of whose ideals in its lower central series $\left(\h^{(i)}\right)^n \subseteq \mc X$ are invariant under $f$, i.e.,
		\begin{equation*}
			{f\left(\left(\h^{(i)}\right)^n,\mc W\right) \subseteq \left(\h^{(i)}\right)^n}.
		\end{equation*}
\end{enumerate}
\end{assumption}

\begin{remark}
Assumption~\ref{asm:f}.\ref{asm:invariance} may seem restrictive, however, in the context of control theory, it is not unreasonable, because the control signal can be used to enforce invariance. Consider, for example, the step-invariant transform of the driftless kinematics of a fully actuated rigid body with velocity inputs on the solvable Lie group $\SE{2}$:
\begin{equation*}
	X[k + 1] =  \exp\left(T\left(\begin{bmatrix}0 & -1 & 0 \\ 1 & 0 & 0 \\ 0 & 0 & 0\end{bmatrix}u_1[k]
		+ \begin{bmatrix}0 & 0 & 1 \\ 0 & 0 & 0 \\ 0 & 0 & 0\end{bmatrix}u_2[k]
		+ \begin{bmatrix}0 & 0 & 0 \\ 0 & 0 & 1 \\ 0 & 0 & 0\end{bmatrix}u_3[k]\right)\right)X[k],
\end{equation*}
where $X \in \SE{2}$, $u_1,u_2,u_3 \in \Real$, $T > 0$. The inputs $u_1,u_2,u_3$ can be chosen to make any subspace of $\mathfrak{se}(2)$ invariant under the local dynamics. Forthcoming papers by the current authors treat a more general class of systems in the contexts of synchronization and output regulation, and show that they (can be made to) satisfy this dynamical invariance assumption.
\end{remark}

Define the notation $\widetilde X := \{X_1,\ldots,X_n\}$ and $\widetilde W := \{W_1,\ldots,W_r\}$. Henceforth, we adopt the convention that summations over $\omega$ are restricted to words of length at least $2$; words of length $1$ will be written separately, in particular, under Assumption~\ref{asm:f}, the dynamics~\eqref{eq:sys} can be written as
\begin{equation}\label{eq:FullPower}
	f(X,W) = AX + BW + \sum_{\omega}c_\omega \otimes \omega,
\end{equation}
where $A : \mc X \to \mc X$, $B : \mc W \to \mc X$ are linear maps, $\omega$ is a word with letters in $\widetilde X \cup \widetilde W$, and $c_\omega \in \Field^n$  is the vector of coefficients of $\omega$ in the series representation of each component function $f_i$.

\begin{proposition}\label{prop:1X}
If the function $f : \mc X \times \mc W \to \mc X$ in~\eqref{eq:sys} is a Lie function that satisfies Assumption~\ref{asm:f}.\ref{asm:vanish}, then every word in the power series of $f$ has at least one letter in $\widetilde X$.
\end{proposition}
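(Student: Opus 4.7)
The plan is to substitute $X = 0$ into the power-series expansion of $f$, invoke the fact that the origin is an equilibrium, and conclude that the resulting pure-$W$ part of the series collectively vanishes and may therefore be removed from the representation.

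First, I would write $f$ in the form~\eqref{eq:FullPower}, namely $f(X,W) = AX + BW + \sum_\omega c_\omega \otimes \omega$, with $\omega$ ranging over words of length at least two in the letters $\widetilde X \cup \widetilde W$, and substitute $X = 0$. A nested Lie bracket is multilinear in each of its arguments and is annihilated by a zero factor in any slot, so every word containing even one letter of $\widetilde X$ drops out; what remains is $BW$ together with the sum over those words whose letters all lie in $\widetilde W$.

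Next, I would invoke Assumption~\ref{asm:f}.\ref{asm:vanish}, which gives $f(0,W) = 0$ for every $W \in \mc W$. Therefore the surviving terms -- the linear map $BW$ and the pure-$W$ nested brackets -- collectively represent the zero element of $\mc X$ for every $W$. I would then observe that these terms can be subtracted from~\eqref{eq:FullPower} without altering $f$ as a function, leaving a representation of $f$ in which every word has at least one letter in $\widetilde X$, as claimed. If one wants to strengthen the statement to $B = 0$, one can rescale $W \mapsto tW$ and use analyticity in $t$ on the neighborhood of convergence to separate the identity $f(0,tW) = 0$ by degree in $W$; the degree-one coefficient then yields $BW = 0$ for all $W$.

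The main obstacle is that the expansion of a Lie element in nested brackets is not unique -- the Jacobi identity and antisymmetry of the bracket allow different formal sums to represent the same Lie element -- so one cannot in general conclude that each individual coefficient $c_\omega$ attached to a pure-$W$ word is itself zero. However, no such claim is required for the proposition: it suffices that the collective contribution of these terms vanishes, and the conclusion is then read as the existence of a representation of $f$ in which every word contains an $\widetilde X$-letter, which the above argument produces by suppressing a known-to-be-zero collection of terms.
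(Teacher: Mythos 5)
Your proposal is correct and follows essentially the same route as the paper: substitute $X = 0$, use multilinearity of the bracket to kill every word containing an $\widetilde X$-letter, and conclude from $f(0,W)=0$ that the surviving $BW$ plus pure-$\widetilde W$ terms can be taken to be zero without loss of generality. Your additional remarks on the non-uniqueness of the nested-bracket representation and the degree-separation argument for $B=0$ are careful refinements of points the paper leaves implicit, but they do not change the argument.
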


\begin{proof}
By bilinearity of the Lie bracket, all words with at least one letter in $\widetilde X$ vanish at $X = 0$. Setting $X = 0$ in~\eqref{eq:FullPower} yields
\begin{equation}\label{eq:W0}
	0 = BW + \sum_{\omega \text{ with no letters in $\widetilde X$}}c_\omega \otimes \omega,
\end{equation}
which holds for all $W \in \mc W$.
\end{proof}

Therefore, without loss of generality, we can take $B$ and the coefficients of all words $\omega$ with no letters in $\widetilde X$ to be zero. By Proposition~\ref{prop:1X}, henceforth, systems that satisfy Assumption~\ref{asm:f} will be written:
\begin{equation}\label{eq:PowerSeries}
	X^+ = AX + \sum_\omega c_\omega \otimes \omega,
\end{equation}
where every word $\omega$ has at least one letter in $\widetilde X$.

\begin{proposition}\label{prop:Linearization}
If the function $f : \mc X \times \mc W \to \mc X$ in~\eqref{eq:sys} satisfies Assumptions~\ref{asm:f}.\ref{asm:ClassA} and~\ref{asm:f}.\ref{asm:vanish}, then its linearization at the origin, $(X,W) = (0,0) \in \mc X \times \mc W$, is $f(X,W) \approx AX$.
\end{proposition}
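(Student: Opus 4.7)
My plan is to exploit the reduced normal form~\eqref{eq:PowerSeries} together with the strong convergence from class-$\mc A$ to show that the nonlinear remainder is $O(\|(X,W)\|^2)$, from which the linearization at the origin is immediate.

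First, by Proposition~\ref{prop:1X} and the discussion following it, Assumption~\ref{asm:f}.\ref{asm:vanish} allows me to take $B = 0$ and discard every word with no letter in $\widetilde X$, so that $f$ has the form
\begin{equation*}
	f(X,W) = AX + R(X,W), \qquad R(X,W) := \sum_\omega c_\omega \otimes \omega,
\end{equation*}
where the summation convention (adopted just before~\eqref{eq:FullPower}) restricts $\omega$ to words of length $|\omega| \geq 2$, each with at least one letter in $\widetilde X$. Since $AX$ is already linear in $(X,W)$, establishing the linearization amounts to showing that $R(X,W) = o(\|(X,W)\|)$ as $(X,W) \to 0$.

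For this I would use the bound on the norm of a nested bracket noted in Remark~\ref{rem:Norm} together with Proposition~\ref{prop:NormClassA}: for $(X,W)$ in the neighbourhood of the origin on which class-$\mc A$ convergence holds, and using the product norm on $\mc X \times \mc W$,
\begin{equation*}
	\|R(X,W)\| \leq \sum_\omega \mu^{|\omega|-1} \|c_\omega\|\, \|X_{\omega_1}\| \cdots \|X_{\omega_{|\omega|}}\|.
\end{equation*}
Every letter $X_{\omega_i}$ lies in $\widetilde X \cup \widetilde W$, so $\|X_{\omega_i}\| \leq \|(X,W)\|$. Since $|\omega| \geq 2$, I can pull out two factors of $\|(X,W)\|$ and bound the rest: picking any radius $r > 0$ inside the domain of convergence and restricting to $\|(X,W)\| \leq r$,
\begin{equation*}
	\|R(X,W)\| \leq \|(X,W)\|^2 \sum_\omega \mu^{|\omega|-1} \|c_\omega\|\, r^{|\omega|-2} =: C\,\|(X,W)\|^2,
\end{equation*}
with $C < \infty$ by Proposition~\ref{prop:NormClassA} applied at the point where every letter has norm $r$. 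Consequently $R(X,W) = o(\|(X,W)\|)$, and so the Jacobian of $f$ at $(0,0)$ is the block map $(X,W) \mapsto AX$, giving $f(X,W) \approx AX$.

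The only non-routine point is the term-by-term domination that justifies treating the infinite series as a genuine higher-order remainder; this is precisely what the class-$\mc A$ hypothesis was designed to supply, so the argument is essentially a bookkeeping exercise once~\eqref{eq:PowerSeries} and Proposition~\ref{prop:NormClassA} are in hand.
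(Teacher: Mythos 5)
Your proof is correct and takes essentially the same route as the paper's: set $B = 0$ via Proposition~\ref{prop:1X}, bound each word by $\mu^{|\omega|-1}$ times the product of its letter norms (Remark~\ref{rem:Norm}), and use the class-$\mc A$ convergence to conclude the nonlinear remainder is $o(\|(X,W)\|)$, so the Fr\'echet derivative at the origin is $(X,W) \mapsto AX$. Your explicit factoring of $\|(X,W)\|^2$ against a uniform constant $C$ on a small ball is in fact a slightly more careful justification of the final limit than the paper's term-by-term passage, but the underlying argument is the same.
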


\begin{proof}
The Fr\'echet derivative of $f(X,W)$ at the origin in the direction $H := (H_X,H_W) \in \mc X \times \mc W$ is the unique linear map $Df := D_Xf \times D_Wf$ that satisfies
\begin{equation}\label{eq:Df}
	\lim_{H \to 0}\frac{\|f(H_X,H_W) - f(0,0) - DfH\|}{\|H\|} = 0.
\end{equation}

Substituting definitions, and invoking Assumption~\ref{asm:f}.\ref{asm:vanish} and Proposition~\ref{prop:1X} to set $B = 0$, the left side of~\eqref{eq:Df} becomes
\begin{equation*}
	\lim_{H \to 0}\frac{\|(A - D_Xf)H_X + \sum_\omega c_\omega \otimes \omega - D_WfH_W\|}{\|H\|},
\end{equation*}
where the letters of $\omega$ are $H_1,\ldots,H_n$ instead of $X_1,\ldots,X_n$ and $H_{n + 1},\ldots,H_{n + r}$ instead of $W_1,\ldots,W_r$. Suppose $D_Xf = A$ and $D_Wf = 0$, then
\begin{equation*}
	\lim_{H \to 0}\frac{\|f(H_X,H_W) - f(0,0) - DfH\|}{\|H\|} 
	= \lim_{H \to 0}\frac{\|\sum_\omega c_\omega \otimes \omega\|}{\|H\|}.
\end{equation*}

By the result discussed in Remark~\ref{rem:Norm},
\begin{equation*}
	\|\omega\| = 
	 \|[H_{\omega_1},[\ldots,H_{\omega_{|\omega|}}]\cdots]\|
	 \leq \mu^{|\omega| - 1}\|H_{\omega_1}\|\cdots\|H_{\omega_{|\omega|}}\|
	 \leq \mu^{|\omega| - 1}\|H\|^{|\omega|}.
\end{equation*}

By the triangle inequality,
\begin{equation*}
	\left\|\sum_\omega c_\omega \otimes \omega\right\|
	\leq \sum_\omega \|c_\omega\|\mu^{|\omega| - 1}\|H\|^{|\omega|},
\end{equation*}
whose right side converges, by Assumption~\ref{asm:f}.\ref{asm:ClassA}. Therefore,

\begin{equation*}
	\lim_{H \to 0}\frac{\|\sum_\omega c_\omega \otimes \omega\|}{\|H\|}
	\leq \lim_{H \to 0}\frac{\sum_\omega \|c_\omega\|\mu^{|\omega| - 1}\|H\|^{|\omega|}}{\|H\|}
	= 0.
\end{equation*}


%

Since any such $Df$ is unique, the choice of $Df = A \times 0$ is the Fr\'echet derivative of $f$ at the origin. Therefore, near the origin, $f(X,W) \approx AX$.
\end{proof}

Our main results assert that global stability properties of~\eqref{eq:sys} under Assumption~\ref{asm:f} can be inferred from its Jacobian linearization, as quantified in Proposition~\ref{prop:Linearization}. The following proposition asserts that the dynamical invariance described in Assumption~\ref{asm:f}.\ref{asm:invariance} can also be inferred from the Jacobian linearization. This latter result is due to strong centrality of the lower central series, i.e., the property described in Theorem~\ref{thm:StronglyCentral}.

\begin{proposition}\label{prop:Ainvariant}
Let $\h \subseteq \g$ be an ideal. If the function $f : \mc X \times \mc W \to \mc W$ in~\eqref{eq:sys} is a Lie function that satisfies Assumption~\ref{asm:f}.\ref{asm:vanish}, then $f\left(\left(\h^{(i)}\right)^n,\mc W\right) \subseteq \left(\h^{(i)}\right)^n$ if and only if $\left(\h^{(i)}\right)^n$ is invariant under $A$.
\end{proposition}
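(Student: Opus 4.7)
My plan is to prove both implications by leveraging the power series~\eqref{eq:PowerSeries}, $f(X,W) = AX + \sum_\omega c_\omega \otimes \omega$, together with Proposition~\ref{prop:1X}, which guarantees every word $\omega$ contains at least one letter from $\widetilde X$.

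For the necessity direction, I would set $W = 0$ and exploit that $tX \in (\h^{(i)})^n$ for every scalar $t$ whenever $X \in (\h^{(i)})^n$. The hypothesis then yields $f(tX,0) \in (\h^{(i)})^n$ for $t$ sufficiently small for the series to converge. Multilinearity of the nested bracket, together with the fact that at $W=0$ all surviving words are composed purely of $\widetilde X$-letters, gives
\begin{equation*}
  \tfrac{1}{t}f(tX,0) = AX + \sum_\omega t^{|\omega|-1}\, c_\omega \otimes \omega.
\end{equation*}
Since $|\omega|\ge 2$ and the absolute convergence from Assumption~\ref{asm:f}.\ref{asm:ClassA} provides a dominating summable majorant on $|t|\le 1$, dominated convergence lets me pass the limit $t\to 0$ inside the sum, which therefore vanishes. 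Because $(\h^{(i)})^n$ is a finite-dimensional, hence closed, subspace of $\mc X$, the limiting value $AX$ remains in $(\h^{(i)})^n$.

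For sufficiency, I fix $X \in (\h^{(i)})^n$ and $W \in \mc W$ and examine the same expansion. The linear term $AX$ lies in $(\h^{(i)})^n$ by hypothesis. Every word $\omega$ contains, by Proposition~\ref{prop:1X}, a letter in $\widetilde X$ which evaluates to an element of $\h^{(i)}$, while the remaining letters lie in $\g$. The key claim I would establish is that a nested Lie bracket containing at least one letter in $\h^{(i)}$ and arbitrary other letters in $\g$ necessarily lies in $\h^{(i)}$. Granted this, each term $c_\omega \otimes \omega$ has all components in $\h^{(i)}$, and closedness of $(\h^{(i)})^n$ combined with the class-$\mc A$ absolute convergence delivers $f(X,W) \in (\h^{(i)})^n$.

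The main obstacle -- really the sole non-cosmetic ingredient -- is the auxiliary fact that $\h^{(i)}$ is an ideal of $\g$, not merely of $\h$. I would prove this by induction on $i$. The base case $i=1$ is the standing hypothesis on $\h$. For the step, the Jacobi identity gives, for $g \in \g$, $a \in \h^{(i)}$, $b \in \h$,
\begin{equation*}
  [g,[a,b]] = [[g,a],b] + [a,[g,b]] \in [\h^{(i)},\h] + [\h^{(i)},\h] = \h^{(i+1)},
\end{equation*}
using the induction hypothesis $[g,a] \in \h^{(i)}$ and $[g,b]\in\h$. With $[\g,\h^{(i)}] \subseteq \h^{(i)}$ in hand, repeated application from the innermost bracket outward propagates $\h^{(i)}$-membership through any nested word, which is precisely the manifestation of the strong centrality highlighted by Theorem~\ref{thm:StronglyCentral}.
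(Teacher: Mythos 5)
Your proof is correct, but it takes a different route from the paper on the necessity direction. The paper's entire argument is the observation you already make in your sufficiency step: for $X \in \left(\h^{(i)}\right)^n$, every word in the series has at least one letter in $\h^{(i)}$ (Proposition~\ref{prop:1X}) and hence, because $\h^{(i)}$ is an ideal of $\g$, lies in $\h^{(i)}$; so $f(X,W) = AX + N(X,W)$ with $N(X,W) \in \left(\h^{(i)}\right)^n$ \emph{unconditionally}. Both implications then follow at once by subtracting within the subspace: $f(X,W) \in \left(\h^{(i)}\right)^n$ if and only if $AX = f(X,W) - N(X,W) \in \left(\h^{(i)}\right)^n$. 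Your scaling-limit argument $\tfrac{1}{t}f(tX,0) \to AX$ for necessity is valid but heavier, and it quietly imports Assumption~\ref{asm:f}.\ref{asm:ClassA} to justify the dominated-convergence interchange --- a hypothesis the proposition does not grant (the paper is careful elsewhere, e.g.\ in Theorem~\ref{thm:Deadbeat}, to work without class-$\mc A$, and the proposition is stated only for Lie functions satisfying Assumption~\ref{asm:f}.\ref{asm:vanish}). The subtraction argument sidesteps this entirely. On the other hand, your Jacobi-identity induction establishing $[\g,\h^{(i)}] \subseteq \h^{(i)}$ --- that the lower central series of the ideal $\h$ consists of ideals of the ambient algebra $\g$, not merely of $\h$ --- makes explicit a step the paper asserts without proof (``Since $\h^{(i)}$ is an ideal, every word belongs to $\h^{(i)}$''), and that step is genuinely load-bearing, since a word may bracket letters of $\widetilde W$, which live only in $\g$, against the letter lying in $\h^{(i)}$.
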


\begin{proof}
Let $\h \subseteq \g$ be an ideal. Suppose $X \in \left(\h^{(i)}\right)^n$. Under Assumption~\ref{asm:f}.\ref{asm:vanish}, by Proposition~\ref{prop:1X}, every word $\omega$ has at least one letter in $\widetilde X$. Since $\h^{(i)}$ is an ideal, every word $\omega$ belongs to $\h^{(i)}$. From~\eqref{eq:PowerSeries}, we conclude $f\left(\left(\h^{(i)}\right)^n \times \mc W\right) \subseteq \left(\h^{(i)}\right)^n$ if and only if $\left(\h^{(i)}\right)^n$ is invariant under $A$.
\end{proof}

\begin{corollary}\label{cor:Ainvariant}
If the function $f : \mc X \times \mc W \to \mc W$ in~\eqref{eq:sys} is a Lie function that satisfies Assumption~\ref{asm:f}.\ref{asm:vanish}, then it satisfies Assumption~\ref{asm:f}.\ref{asm:invariance} if and only if $\left(\h^{(i)}\right)^n$ is invariant under $A$.
\end{corollary}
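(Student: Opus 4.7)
The plan is to reduce the corollary to a quantifier manipulation over Proposition~\ref{prop:Ainvariant}. Assumption~\ref{asm:f}.\ref{asm:invariance} is the existence of an ideal $\h \supseteq [\g,\g]$ with finite nilindex $p$ together with the conjunction of invariance statements $f\left(\left(\h^{(i)}\right)^n,\mc W\right) \subseteq \left(\h^{(i)}\right)^n$ ranging over $i \in \{1,\ldots,p\}$. Proposition~\ref{prop:Ainvariant} already establishes, for a \emph{single} ideal $\h^{(i)}$, that under Assumption~\ref{asm:f}.\ref{asm:vanish} the $f$-invariance of $\left(\h^{(i)}\right)^n$ is equivalent to its $A$-invariance. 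The corollary is therefore obtained by asserting this equivalence simultaneously for every $i$ in the (finite) range.

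Concretely, I would prove each direction separately. For the ($\Leftarrow$) direction, I would assume that $\left(\h^{(i)}\right)^n$ is $A$-invariant for each $i \in \{1,\ldots,p\}$ and apply Proposition~\ref{prop:Ainvariant} $p$ times, once for each $i$, to conclude $f\left(\left(\h^{(i)}\right)^n,\mc W\right) \subseteq \left(\h^{(i)}\right)^n$ for every $i$, which is exactly Assumption~\ref{asm:f}.\ref{asm:invariance}. For the ($\Rightarrow$) direction, Assumption~\ref{asm:f}.\ref{asm:invariance} supplies the ideal $\h$ and the $f$-invariance of each $\left(\h^{(i)}\right)^n$; the same proposition then promotes each such invariance to $A$-invariance of $\left(\h^{(i)}\right)^n$.

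I do not anticipate any genuine obstacle: the corollary is essentially a universally-quantified repackaging of Proposition~\ref{prop:Ainvariant}, and the only substantive input -- that every word in the power series of $f$ contains at least one letter in $\widetilde X$ (Proposition~\ref{prop:1X}) -- has already been absorbed into the proof of that proposition via Assumption~\ref{asm:f}.\ref{asm:vanish}. The finiteness of the nilindex $p$, which is part of Assumption~\ref{asm:f}.\ref{asm:invariance}, ensures that the conjunction over $i$ is well-defined, so no convergence or limiting argument is needed.
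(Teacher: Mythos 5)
Your proof is correct and takes the same route the paper intends: the corollary is stated without a separate proof precisely because it is the Proposition~\ref{prop:Ainvariant} equivalence applied to each ideal $\h^{(i)}$ in the (finite) lower central series of the $\h$ furnished by Assumption~\ref{asm:f}.\ref{asm:invariance}. Your handling of the two directions and of the existential quantifier over $\h$ matches the paper's reading.
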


Our next result emphasizes that $A$-invariant subspaces induce well-defined quotient systems associated with the nonlinear dynamics.

\begin{proposition}\label{prop:QuotientDynamics}
If the function $f : \mc X \times \mc W \to \mc X$ in~\eqref{eq:sys} satisfies Assumption~\ref{asm:f}, then, given an $A$-invariant ideal $\mc V \subseteq \mc X$ with canonical projection $P : \g \to \g / \mc V$, there exists a unique function $\bar f : \mc X / \mc V \times \mc W / \mc V \to \mc X / \mc V$ that satisfies Assumptions~\ref{asm:f}.\ref{asm:ClassA} and \ref{asm:f}.\ref{asm:vanish}, and makes the following diagram commute.
%
\begin{equation*}
	\xymatrix{\mc X \times \mc W \ar[d]_{(I_n \otimes P) \times (I_r \otimes P)} \ar[rr]^f 	&&	\mc X \ar[d]^{I_n \otimes P} \\
			\mc X / \mc V \times \mc W / \mc V \ar[rr]_{\bar f}	&&	\mc X / \mc V}
\end{equation*}
\end{proposition}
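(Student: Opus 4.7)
The plan is to define $\bar f$ as the descent of $f$: for any representatives $X, W$ with $(I_n \otimes P)X = \bar X$ and $(I_r \otimes P)W = \bar W$, set
\begin{equation*}
\bar f(\bar X, \bar W) := (I_n \otimes P)\bigl(f(X, W)\bigr).
\end{equation*}
Uniqueness is then immediate from surjectivity of the projections: any map making the diagram commute must coincide with this one on every point of $\mc X/\mc V \times \mc W/\mc V$. The bulk of the work is verifying well-definedness and inheritance of Assumptions~\ref{asm:f}.\ref{asm:ClassA} and \ref{asm:f}.\ref{asm:vanish}.

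The technical heart is well-definedness. Given a second choice of representatives $(X + V, W + V')$ with $V \in \mc V^n$, $V' \in \mc V^r$, I would expand $f(X+V, W+V') - f(X, W)$ via the series~\eqref{eq:PowerSeries}. The linear contribution $AV$ lies in $\mc V^n$ by $A$-invariance. For each word $\omega(X+V, W+V')$, multilinearity of the Lie bracket writes it as a finite sum indexed by subsets of letter slots whose inputs are replaced by a component of $V$ or $V'$; the ``no-substitution'' term cancels $\omega(X, W)$, while every other term has at least one letter drawn from $\mc V$. Since $\mc V$ is an ideal of $\g$, each such term lies in $\mc V$, so each correction $c_\omega \otimes (\omega(X+V, W+V') - \omega(X, W))$ lies in $\mc V^n$. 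Because $\mc V^n$ is finite-dimensional, hence closed, and the series converges absolutely by Assumption~\ref{asm:f}.\ref{asm:ClassA}, the full sum of corrections still lies in $\mc V^n$. Continuity of $I_n \otimes P$ then yields $(I_n \otimes P)(f(X+V, W+V')) = (I_n \otimes P)(f(X, W))$.

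Because $\mc V$ is an ideal, $P$ is a Lie-algebra homomorphism, so pushing~\eqref{eq:PowerSeries} through $I_n \otimes P$ term-by-term yields the series representation
\begin{equation*}
\bar f(\bar X, \bar W) = \bar A \bar X + \sum_\omega c_\omega \otimes \omega(\bar X, \bar W),
\end{equation*}
where $\bar A$ is the induced linear map of Proposition~\ref{prop:QuotientMap} and each word is now interpreted in $\g/\mc V$. This both certifies $\bar f$ as a Lie function and reduces the class-$\mc A$ check to a coefficient-wise comparison: Lemma~\ref{lem:QuotientInequality} gives $\|\bar X_i\|_{\g/\mc V} \leq \|X_i\|_\g$, and a brief calculation based on Remark~\ref{rem:Norm} and the infimum defining the quotient norm gives a bracket constant $\bar\mu \leq \mu$ for $\g/\mc V$, so every term of the convergence bound~\eqref{eq:MyConvergence} for $\bar f$ is dominated by its counterpart for $f$.

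Finally, for Assumption~\ref{asm:f}.\ref{asm:vanish}, the forward direction $\bar X = 0 \Rightarrow \bar f(0, \bar W) = 0$ for all $\bar W$ follows at once: if $\bar X = 0$ then $X \in \mc V^n$, and the ideal-invariance argument already used (cf.\ Proposition~\ref{prop:Ainvariant}) delivers $f(X, W) \in \mc V^n$ for every $W$. The converse direction is the point I expect to be most delicate, since $\bar f(\bar X, \bar W) = 0$ yields only $f(X, W) \in \mc V^n$ rather than $f(X, W) = 0$; my approach would be to exploit the $\widetilde X$-occurrence property of Proposition~\ref{prop:1X} together with the injectivity of $\bar A$ inherited from $A$ via Proposition~\ref{prop:QuotientMap} to show that no coset $\bar X \neq 0$ can send the entire series image into $\mc V^n$.
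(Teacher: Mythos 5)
Your construction coincides with the paper's: the paper likewise obtains $\bar A$ from Proposition~\ref{prop:QuotientMap}, pushes the series through $I_n \otimes P$ using the fact that the canonical projection onto the quotient by an ideal is a morphism of Lie algebras, defines $\bar f(\bar X,\bar W) := \bar A\bar X + \sum_\omega c_\omega \otimes [\bar Y_{\omega_1},[\ldots,\bar Y_{\omega_{|\omega|}}]_{\g/\mc V}\cdots]_{\g/\mc V}$, and deduces the class-$\mc A$ property from Lemma~\ref{lem:QuotientInequality}. Your explicit well-definedness check (every correction word has a letter in the ideal $\mc V$, the linear correction is absorbed by $A$-invariance, and closedness of $\mc V^n$ together with absolute convergence passes this to the infinite sum) is sound and, if anything, more careful than the paper, which sidesteps the issue by writing $\bar f$ directly as a formula in the cosets. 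Uniqueness via surjectivity of the projections is identical in both.

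The one place you leave open --- the converse implication, that $\bar f(\bar X,\bar W)=0$ for all $\bar W$ forces $\bar X = 0$ --- is precisely where the paper offers no argument either (it states that Assumption~\ref{asm:f}.\ref{asm:vanish} is ``clear from the definition of $\bar f$''), so you have correctly located the soft spot; but your proposed repair does not go through. Proposition~\ref{prop:QuotientMap} asserts only existence and uniqueness of the induced map, not injectivity; Assumption~\ref{asm:f} nowhere makes $A$ injective (the paper later even works with $\rho(A)=0$); and injectivity of $A$ would in any case not transfer to $\bar A$ without $A^{-1}\mc V^n = \mc V^n$. Indeed, the converse can genuinely fail: taking $\mc V = \h \supseteq [\g,\g]$, every word of length at least two projects to zero, so $\bar f_0(\bar X_0,\bar W_0) = \bar A_0\bar X_0$ and the equilibrium set is exactly $\Ker \bar A_0$, which is nontrivial whenever $\bar A_0$ is singular. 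So no argument from the stated hypotheses alone can close this step; one would need to either assume $\bar A$ injective or weaken the conclusion to the forward implication only. The saving grace is that the downstream proofs (Theorems~\ref{thm:Nilpotent} and~\ref{thm:Solvable}) use only that forward implication --- that the origin is an equilibrium of the quotient dynamics --- together with the class-$\mc A$ property and the commuting square, all of which your proposal does deliver.
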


\begin{proof}
Along the path $\mc X \times \mc W \overset{f}{\longrightarrow} \mc X \xrightarrow{I_n \otimes P} \mc X / \mc V$, we have
\begin{equation*}
	(I_n \otimes P)f(X,W) = (I_n \otimes P)AX + (I_n \otimes P)\sum_\omega c_\omega \otimes \omega.
\end{equation*}
By Proposition~\ref{prop:QuotientMap}, there exists a unique map $\bar A : \mc X / \mc V \to \mc X / \mc V$ such that $(I_n \otimes P)A = \bar A(I_n \otimes P)$. Using the property of tensor products that $(M_1 \otimes N_1)(M_2 \otimes N_2) = (M_1M_2) \otimes (N_1N_2)$, the projection of the summation over $\omega$ equals $\sum_\omega c_\omega \otimes (P\omega)$. Then, since the canonical projection of an algebra onto an ideal is a morphism of algebras~\cite[p. $537$]{MacLane1999}\footnote{In~\cite{MacLane1999}, a proof is provided in the context of \emph{graded} algebras, but this additional structure is not used.}, we have
\begin{equation*}
	P\omega = P[Y_{\omega_1},[\ldots,Y_{\omega_{|\omega|}}]\cdots] = [PY_{\omega_1},[\ldots,PY_{\omega_{|\omega|}}]_{\g / \mc V}\cdots]_{\g / \mc V}, \qquad Y_{\omega_i} \in \widetilde X \cup \widetilde W.
\end{equation*}

The map $\bar f : \mc X / \mc V \times \mc W / \mc V \to \mc X / \mc V$ is then given by
\begin{equation*}
	\bar f(\bar X,\bar W) := \bar A \bar X + \sum_\omega c_\omega \otimes [\bar Y_{\omega_1},[\ldots,\bar Y_{\omega_{|\omega|}}]_{\g / \mc V}\cdots]_{\g / \mc V},
\end{equation*}
%
where $\bar Y_{\omega_i} = PY_{\omega_i}$. That $\bar f$ satisfies Assumption~\ref{asm:f}.\ref{asm:ClassA} follows from Lemma~\ref{lem:QuotientInequality}; satisfaction of Assumption~\ref{asm:f}.\ref{asm:vanish} is clear from the definition of $\bar f$.
\end{proof}

\section{Nilpotent Lie Algebras}

In this section, we present a global stability result in the case that $\g$ is nilpotent, and the ideal $\h$ satisfying Assumption~\ref{asm:f}.\ref{asm:invariance} is $\g$ itself. We devote this section to this specific case because, as will be seen, the results are much stronger than in the general case. The general case where Assumption~\ref{asm:f}.\ref{asm:invariance} is satisfied by a proper ideal is addressed in Section~\ref{sec:Solvable}. The stability property proved in this section is \emph{semiglobal-exponential stability}. The following definition is the natural adaptation of a continuous-time definition, taken from~\cite{Loria2005}.

\begin{definition}[{\cite[Definition $2.7$]{Loria2005}}]
Given a discrete-time dynamical system $x^+ = f(k,x)$, $x \in \mc X$, the origin of $\mc X$ is \textbf{semiglobally exponentially stable} if for all $M > 0$, there exist $\alpha \geq 0$, $\lambda < 1$ such that if $\|x[0]\| \leq M$, then for all $k \geq 0$,
\begin{equation*}
	\|x[k]\| \leq \alpha\lambda^k\|x[0]\|.
\end{equation*}
\end{definition}

%

It follows immediately from the definition that semiglobal exponential stability implies local exponential stability. Our main result in the nilpotent case is that a sufficiently small spectral radius of $A$ implies semiglobal exponential stability.

\begin{theorem}\label{thm:Nilpotent}
Let $\g$ be a nilpotent Lie algebra, and define $\mc X := \g^n$ and $\mc W := \g^r$. Consider the dynamics~\eqref{eq:sys} and suppose $f : \mc X \times \mc W \to \mc X$ satisfies Assumption~\ref{asm:f}, where Assumption~\ref{asm:f}.\ref{asm:invariance} is satisfied with $\h = \g$. If there exist $\beta \geq 0$, $s \geq 1$ such that $\|W[k]\| \leq \beta s^k$, and $\rho(A) < s^\frac{p(1 - p)}{2}$, then the origin of $\mc X$ is semiglobally exponentially stable.
\end{theorem}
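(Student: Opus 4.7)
The plan is to exploit the filtration of $A$-invariant ideals $\g = \g^{(1)} \supset \g^{(2)} \supset \cdots \supset \g^{(p)} \supset \g^{(p+1)} = 0$ (afforded by Assumption~\ref{asm:f}.\ref{asm:invariance} via Proposition~\ref{prop:Ainvariant}) and prove, by induction on $i \in \{1,\ldots,p\}$, that for every $M > 0$ with $\|X[0]\| \leq M$, the projected trajectory $\bar X_{i+1}[k] := (I_n \otimes P_{i+1}) X[k]$ in $(\g/\g^{(i+1)})^n$ decays geometrically, at some rate $\lambda_{i+1} < 1$ with constant $C_{i+1}(M)$. Since $P_{p+1}$ is the identity, the case $i = p$ yields the desired semiglobal exponential bound. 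To set up, I would invoke Theorem~\ref{thm:Anorm} to fix a norm on $\g$ for which $\|A\| \leq \rho(A) + \varepsilon$, with $\varepsilon > 0$ small enough that $(\rho(A) + \varepsilon)\, s^{p(p-1)/2} < 1$; Proposition~\ref{prop:QuotientDynamics} then supplies well-defined quotient dynamics at each level, and Proposition~\ref{prop:Pnorm} ensures $\|\bar A_{i+1}\| \leq \|A\|$.

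The crucial structural observation is a consequence of strong centrality (Theorem~\ref{thm:StronglyCentral}): since every letter in $\widetilde X \cup \widetilde W$ lies in $\g = \g^{(1)}$, an iterated Lie bracket of $\ell$ letters lies in $\g^{(\ell)}$, so $P_{i+1}\omega = 0$ whenever $|\omega| \geq i+1$; moreover, replacing any letter of a length-$\ell$ word by a representative differing by an element of $\g^{(i-\ell+2)}$ perturbs $\omega$ only by an element of $\g^{(i+1)}$. Hence the projected forcing $P_{i+1}(f(X,W) - AX)$ is a finite sum (by nilpotency) of projected words that depend only on lower-depth projections $P_{i-\ell+2} X[k]$ and on $W[k]$, and by Proposition~\ref{prop:1X} each such word has at least one letter from $\widetilde X$.

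The base case $i = 1$ is immediate: all bracket words vanish modulo $\g^{(2)}$, so $\bar X_2^+ = \bar A_2 \bar X_2$ and $\|\bar X_2[k]\| \leq (\rho(A) + \varepsilon)^k \|X[0]\|$. For the inductive step I iterate the affine recurrence
\begin{equation*}
\bar X_{i+1}[k] = \bar A_{i+1}^k\, \bar X_{i+1}[0] + \sum_{j=0}^{k-1} \bar A_{i+1}^{\,k-1-j}\, \bar F_{i+1}[j],
\end{equation*}
and bound each term of $\bar F_{i+1}[j]$ using Remark~\ref{rem:Norm}, applied to representatives chosen close to the quotient-norm infima, together with the inductive estimates $\|\bar X_{i-\ell+2}[j]\| \leq C_{i-\ell+2}(M)\lambda_{i-\ell+2}^j \|X[0]\|$ and the hypothesis $\|W[j]\| \leq \beta s^j$. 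A length-$\ell$ word with $k_X \geq 1$ letters from $\widetilde X$ and $k_W = \ell - k_X$ letters from $\widetilde W$ contributes at geometric rate $\lambda_{i-\ell+2}^{k_X} s^{k_W}$, with coefficient scaling as $M^{k_X - 1}\|X[0]\|$ after using $\|X[0]\|^{k_X} \leq M^{k_X - 1}\|X[0]\|$. A standard geometric-sum estimate then yields $\|\bar X_{i+1}[k]\| \leq C_{i+1}(M)\lambda_{i+1}^k \|X[0]\|$ with $\lambda_{i+1}$ determined by the worst such rate.

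The main obstacle is the combinatorial bookkeeping: one must track how the rates $\lambda_i$ and the $M$-dependent constants $C_i(M)$ evolve through the recursion, and verify that they close up to yield $\lambda_{p+1} < 1$ precisely under the hypothesis $\rho(A) < s^{p(1-p)/2}$. Carrying through the induction shows that each level of the recursion multiplies the previous worst-case rate by at most $s^{i-1}$, so that the exponent on $s$ accumulates to $\sum_{i=1}^{p-1} i = p(p-1)/2$, giving exactly the margin $(\rho(A) + \varepsilon)\, s^{p(p-1)/2} < 1$ afforded by the hypothesis. Minor technical points are the compatible choice of coset representatives when estimating bracket words via Remark~\ref{rem:Norm}, and the polynomial-in-$M$ growth of $C_{i+1}(M)$ at each inductive step, which ultimately collects into the single multiplicative constant $\alpha$ in the definition of semiglobal exponential stability.
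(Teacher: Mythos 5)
Your proposal is correct and follows essentially the same route as the paper: induction along the lower central series quotients, using strong centrality to show that the projected forcing at each level depends only on lower-level projections of the state (the paper's Lemma~\ref{lem:Pw}), variation of parameters with a geometric-sum bound, the $\|X[0]\|^{q}\leq M^{q-1}\|X[0]\|$ device for semiglobality, and the accumulation $\lambda_i=\lambda_{i-1}s^{i-1}$ giving the exponent $p(p-1)/2$ that matches the hypothesis. The only cosmetic differences are an index shift, your slightly sharper observation that a length-$\ell$ word needs only the depth-$(i-\ell+2)$ projection, and the paper's explicit arrangement $\Lambda_i<\Lambda\leq\lambda_i$ to avoid resonance in the geometric sum, which your ``standard estimate'' implicitly requires.
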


\begin{remark}
The assertion that $W$ is bounded by a function of the form $\beta s^k$ implies that it is $Z$-transformable.
\end{remark}

Our proof of Theorem~\ref{thm:Nilpotent} makes extensive use of canonical projections of $\g$ onto $\g / \g^{(i + 1)}$, where $\g^{(i + 1)}$ is an ideal of the lower central series of $\g$ (recall Definition~\ref{def:LCS}). Throughout this section, let $P_i : \g \to \g/\g^{(i + 1)}$ denote the canonical projection of $\g$ onto $\g^{(i + 1)}$, and let $\imath_i : \g / \g^{(i + 1)} \to \g$ denote any linear injection such that $P_i \circ \imath_i = \Id_{\g / \g^{(i + 1)}}$. Before proving Theorem~\ref{thm:Nilpotent}, we establish several intermediary results.

\begin{lemma}\label{lem:Pw}
Let $\g$ be a Lie algebra. Given a word $\omega$ with letters $Y_1,\ldots,Y_{|\omega|} \in \g$,
\begin{equation*}
	P_i\omega = P_i [\imath_{i - 1} \circ P_{i - 1}Y_1,[\ldots,\imath_{i - 1} \circ P_{i - 1}Y_{|\omega|}]\cdots].
\end{equation*}
\end{lemma}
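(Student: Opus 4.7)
The plan is to exploit multilinearity of the Lie bracket together with Lemma~\ref{lem:CompProj} and the strong centrality property in Theorem~\ref{thm:StronglyCentral}. First I would, for each letter, write the tautological decomposition
\begin{equation*}
Y_j = \imath_{i - 1} \circ P_{i - 1} Y_j + E_j, \qquad E_j := (\Id_\g - \imath_{i - 1} \circ P_{i - 1}) Y_j,
\end{equation*}
and invoke Lemma~\ref{lem:CompProj} (with $\mc V = \g^{(i)}$) to conclude that $E_j \in \g^{(i)}$ for every $j$.

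Next I would expand $\omega = [Y_1,[Y_2,[\ldots,Y_{|\omega|}]\cdots]$ by multilinearity of the bracket, producing a sum of $2^{|\omega|}$ nested brackets in which each slot is filled with either $\imath_{i - 1} \circ P_{i - 1} Y_j$ or $E_j$. Exactly one of these terms -- the one in which every slot is filled with $\imath_{i - 1} \circ P_{i - 1} Y_j$ -- is the nested bracket on the right-hand side of the stated identity. I would then show that every other summand lies in $\g^{(i + 1)}$. Specifically, if a summand contains $k \geq 1$ factors $E_{j_1}, \ldots, E_{j_k} \in \g^{(i)}$ and $|\omega| - k$ factors from $\g = \g^{(1)}$, an induction from the innermost bracket outward, using Theorem~\ref{thm:StronglyCentral} at each step, places that summand in $\g^{(ki + (|\omega| - k))} \subseteq \g^{(i + |\omega| - 1)} \subseteq \g^{(i + 1)}$, where the last containment uses $|\omega| \geq 2$ (the convention adopted in the paper for words appearing in the series expansion of $f$).

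The conclusion follows by applying $P_i$ to the expanded equality: every unwanted summand is in $\g^{(i + 1)} = \Ker P_i$ and therefore vanishes, leaving only $P_i[\imath_{i - 1} \circ P_{i - 1} Y_1, [\ldots, \imath_{i - 1} \circ P_{i - 1} Y_{|\omega|}]\cdots]$.

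The only nontrivial step is the inductive application of Theorem~\ref{thm:StronglyCentral} to certify that a nested bracket with one letter in $\g^{(i)}$ and the rest in $\g$ lands in $\g^{(i + 1)}$; this is routine once the bracket is peeled from the inside, but it is also the step where the convention $|\omega| \geq 2$ is essential, since for a one-letter ``word'' the identity would demand $E_1 \in \g^{(i + 1)}$, which Lemma~\ref{lem:CompProj} does not provide.
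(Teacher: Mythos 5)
Your argument is correct and is essentially the paper's own proof: both decompose each letter as $\imath_{i-1}\circ P_{i-1}Y_j$ plus a remainder in $\g^{(i)}$ via Lemma~\ref{lem:CompProj}, and both kill the unwanted cross terms under $P_i$ using the strong centrality property of Theorem~\ref{thm:StronglyCentral}. The only difference is organizational -- you expand all $2^{|\omega|}$ terms at once while the paper peels one letter at a time -- and your explicit remark that $|\omega| \geq 2$ is needed is a point the paper leaves implicit.
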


\begin{proof}
By bilinearity of the Lie bracket and Lemma~\ref{lem:CompProj},
\begin{equation}\label{eq:NilPw}
	P_i\omega = P_i\underbrace{[\underbrace{(\Id_\g - \imath_{i - 1} \circ P_{i - 1})Y_1}_{\in \g^{(i)}},[Y_2,[\ldots,Y_{|\omega|}]\cdots]}_{\in \g^{(i + 1)}} + P_i[\imath_{i - 1} \circ P_{i - 1}Y_1,[Y_2,[\ldots,Y_{|\omega|}]\cdots],
\end{equation}
where membership in $\g^{(i + 1)}$ follows from the property of the ideals discussed in Theorem~\ref{thm:StronglyCentral}; the first term is zero, since $P_i\g^{(i + 1)} = 0$, by definition of $P_i$. Applying the same decomposition to the rest of the letters yields the result.
\end{proof}

\begin{proof}[Theorem~\ref{thm:Nilpotent}]
Assume that there exist $\beta \geq 0$, $s \geq 1$ such that $\|W[k]\| \leq \beta s^k$, and that $\rho(A) < s^\frac{p(1 - p)}{2}$; the latter implies that $A$ is Schur, since $p,s \geq 1$. Let $M > 0$ be arbitrary and assume $\|X[0]\| \leq M$. We examine the quotient dynamics on $\mc X / \g^{(i + 1)}$ for all $i$. Since $\g$ is nilpotent, the quotient algebra $\g / \g^{(i + 1)}$ is nilpotent with nilindex $i$, thus for all $|\omega| > i$, $P_i\omega = 0$. By Proposition~\ref{prop:QuotientDynamics},
\begin{equation}\label{eq:QuotientDynamics}
	\bar X_i^+ = \bar A_i\bar X_i + \sum_{|\omega| \leq i}c_\omega \otimes (P_i\bar\omega_{i - 1}),
\end{equation}
where $\bar\omega_{i - 1} \in \g$ is the word $\omega$ with $\imath_{i - 1} \circ P_{i - 1}$ applied to each of its letters, per Lemma~\ref{lem:Pw}.

Since $A : \g^n \to \g^n$ is Schur, every induced map $\bar A_i : \left(\g/\g^{(i + 1)}\right)^n \to \left(\g/\g^{(i + 1)}\right)^n$ is also Schur. The quotient dynamics~\eqref{eq:QuotientDynamics} have the form of a linear system with state $\bar X_i$ and exogenous input
\begin{equation}\label{eq:ui}
	u_i := \sum_{|\omega| \leq i}c_\omega \otimes (P_i\bar\omega_{i - 1}),
\end{equation}
which does not depend on $\bar X_i$. Even though quotient state $i - 1$ drives quotient state $i$, the analysis does not exploit a serial structure; rather, each subsequent quotient system is a ``larger piece'' of the full dynamics. We will show that each quotient system is semiglobally exponentially stable. Our proof is by finite induction. The approach is to show that each quotient system is semiglobally exponentially stable, and, since $\g^{(i)} = 0$ for $i > p$, the $p$th quotient system is simply the original system.


Before proceeding, we define some key values. Since $A$ is Schur, for any $\varepsilon \in (0,1 - \rho(A))$, define $\Lambda := \rho(A) + \varepsilon$, then there exists a $\sigma \geq 0$ such that for all $k \geq 0$, $\|A^k\| \leq \sigma\Lambda^k$~\cite[\S $5$]{LaSalle1986}. Define 
\begin{equation*}
	\Lambda_i := \rho(\bar A_i) + \frac{i}{p + 1}\varepsilon, \quad 1 \leq i \leq p,
\end{equation*}
then for all $i$, there exists $\sigma_i \geq 0$ such that $\|\bar A_i^k\| \leq \sigma_i\Lambda_i^k$. Note $\Lambda_1 < \cdots < \Lambda_p < \Lambda < 1$.

We begin with the base case, $i = 1$:
\begin{equation*}
	\bar X_1^+ = \bar A_1\bar X_1,
\end{equation*}
which is an unforced linear time-invariant system. Consequently, $\bar X_1[k] = \bar A_1^k\bar X_1[0]$, so we have $\|\bar X_1[k]\| \leq \sigma_1\Lambda_1^k\|\bar X_1[0]\| \leq \sigma_1\Lambda^k\|\bar X_1[0]\|$. Let $\alpha_1 := \sigma_1$ and $\lambda_1 := \Lambda$.

By way of induction, we assert that there exists $\alpha_{i - 1} > 0$ such that
%
\begin{equation}
	\|\bar X_{i - 1}[k]\| \leq \alpha_{i - 1}\lambda_{i - 1}^k\|\bar X_{i - 1}[0]\| \label{eq:IndHypDecay},
\end{equation}
%
where for $1 \leq i - 1 \leq p - 1$, $\lambda_{i - 1} := \Lambda s^\frac{(i - 1)(i - 2)}{2}$. We remark that $\frac{(i - 1)(i - 2)}{2}$ is the sum of all natural numbers less than $i - 1$. Note also that by Lemma~\ref{lem:QuotientInequality}, $\|X[0]\| \leq M$ implies $\|\bar X_{i - 1}[0]\| \leq M$.

We now prove that case $i - 1$ implies case $i$. Fix $1 \leq j \leq n$ and choose an arbitrary word $\omega$ in the power series of $f_j$. Denote its letters by $Y_k \in \widetilde X \cup \widetilde W$, $k \in \{1,\ldots,|\omega|\}$, and the number of these letters in $\widetilde X$ by $q$. We will show that the projection of each word $P_i\omega$ converges to zero exponentially. Beginning with Lemma~\ref{lem:Pw},
\begin{equation*}
	P_i\omega = P_i [\imath_{i - 1} \circ P_{i - 1}Y_1,[\ldots,\imath_{i - 1} \circ P_{i - 1}Y_{|\omega|}]\cdots],
\end{equation*}
%
then
\begin{equation*}
	\|P_i\omega\| \leq \mu^{|\omega| - 1}\|\imath_{i - 1}\|^{|\omega|}\prod_{j = 1}^{|\omega|}\|P_{i - 1}Y_j\|.
\end{equation*}
We have $\|P_{i - 1}X_j\| \leq \|(I_n \otimes P_{i - 1})X\|$, and Lemma~\ref{lem:QuotientInequality} implies $\|P_{i - 1}W_j\| \leq \|W\|$. Combining these inequalities with the induction hypothesis~\eqref{eq:IndHypDecay} yields
\begin{align}
	\|P_i\bar\omega_{i - 1}[k]\| &\leq \mu^{|\omega| - 1}\|\imath_{i - 1}\|^{|\omega|}\|\bar X_{i - 1}[k]\|^q\|W[k]\|^{|\omega| - q} \notag \\
	&\leq \mu^{|\omega| - 1}\|\imath_{i - 1}\|^{|\omega|}\left(\alpha_{i - 1}\lambda_{i - 1}^k\|\bar X_{i - 1}[0]\|\right)^q(\beta s^k)^{|\omega| - q} \notag \\
	&= \mu^{|\omega| - 1}\|\imath_{i - 1}\|^{|\omega|}\alpha_{i - 1}^q\beta^{|\omega| - q}(\lambda_{i - 1}^qs^{|\omega| - q})^k\|\bar X_{i - 1}[0]\|^q. \label{eq:PwBound1}
\end{align}
Since $\|X[0]\| \leq M$, in~\eqref{eq:PwBound1}, we use Lemma~\ref{lem:QuotientInequality} to upper bound $q-1$ of the factors of $\|\bar X_{i - 1}[0]\|$ by $M$, and the single remaining factor by $\|\bar X_i[0]\|$:
\begin{equation}\label{eq:PwDecay}
	\|P_i\bar\omega_{i - 1}[k]\| \leq \mu^{|\omega| - 1}\|\imath_{i - 1}\|^{|\omega|}\alpha_{i - 1}^q\beta^{|\omega| - q}(\lambda_{i - 1}^qs^{|\omega| - q})^kM^{q - 1}\|\bar X_i[0]\|.
\end{equation}
%

\begin{claim}\label{claim:u}
There exists $\gamma_i \geq 0$ such that the norm of the exogenous input~\eqref{eq:ui} satisfies
\begin{equation*}
	\|u_i[k]\| \leq \gamma_i({\underbrace{\lambda_{i - 1}s^{i - 1}}_{\lambda_i}})^k\|\bar X_i[0]\|.
\end{equation*}
\end{claim}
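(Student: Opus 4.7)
The plan is to bound $\|u_i[k]\|$ term-by-term in the defining sum~\eqref{eq:ui} by applying the triangle inequality and then invoking the inductive estimate~\eqref{eq:PwDecay}. First I would write
\[
\|u_i[k]\| \leq \sum_{|\omega|\leq i}\|c_\omega\|\,\|P_i\bar\omega_{i-1}[k]\|
\]
and substitute~\eqref{eq:PwDecay} into each summand, yielding an upper bound of the form
\[
\|c_\omega\|\mu^{|\omega|-1}\|\imath_{i-1}\|^{|\omega|}\alpha_{i-1}^q\beta^{|\omega|-q}M^{q-1}\bigl(\lambda_{i-1}^q s^{|\omega|-q}\bigr)^k\|\bar X_i[0]\|,
\]
where $q=q(\omega)\geq 1$ is the number of letters of $\omega$ in $\widetilde X$ (guaranteed by Proposition~\ref{prop:1X}).

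The crucial step is to bound the time-dependent factor $(\lambda_{i-1}^q s^{|\omega|-q})^k$ uniformly in $\omega$ by $(\lambda_i)^k$, where $\lambda_i := \lambda_{i-1}s^{i-1}$. Since $k\geq 0$ and $s\geq 1$, this reduces to the scalar inequality $\lambda_{i-1}^{q-1}s^{|\omega|-q}\leq s^{i-1}$ for $|\omega|\leq i$ and $q\geq 1$. When $q=1$, this is immediate from $|\omega|\leq i$. When $q\geq 2$, it follows from the sublemma $\lambda_{i-1}\leq s$, which then gives $\lambda_{i-1}^{q-1}s^{|\omega|-q}\leq s^{q-1}s^{|\omega|-q}=s^{|\omega|-1}\leq s^{i-1}$.

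The sublemma itself is the main technical hurdle. Writing $\lambda_{i-1}=\Lambda s^{(i-1)(i-2)/2}$ with $\Lambda=\rho(A)+\varepsilon$, I would use the hypothesis $\rho(A)<s^{p(1-p)/2}=s^{-p(p-1)/2}$ together with the freedom to shrink $\varepsilon$ to secure $\Lambda\leq s^{-p(p-1)/2}$. Then $\lambda_{i-1}\leq s^{-p(p-1)/2+(i-1)(i-2)/2}$, and $\lambda_{i-1}\leq s$ follows from the elementary arithmetic inequality $(i-1)(i-2)-p(p-1)\leq 2$ for $2\leq i\leq p$, which in turn follows from $(i-1)(i-2)\leq (p-1)(p-2)$ combined with $(p-1)(p-2)-p(p-1)=-2(p-1)\leq 0$.

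With the sublemma in hand, I would factor $(\lambda_i)^k\|\bar X_i[0]\|$ out of each summand and define
\[
\gamma_i := \sum_{|\omega|\leq i}\|c_\omega\|\mu^{|\omega|-1}\|\imath_{i-1}\|^{|\omega|}\alpha_{i-1}^{q(\omega)}\beta^{|\omega|-q(\omega)}M^{q(\omega)-1}.
\]
Finiteness of $\gamma_i$ is immediate, either because $\g/\g^{(i+1)}$ is nilpotent of nilindex $i$ (so only finitely many projected words $P_i\bar\omega_{i-1}$ are nonzero, and the remaining terms may be discarded) or by invoking Proposition~\ref{prop:NormClassA} after bounding each letter norm by $\max(M,\beta)$. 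The only real obstacle is verifying the sublemma $\lambda_{i-1}\leq s$ from the hypothesis on $\rho(A)$; once that is in place, the rest is straightforward tensor-norm bookkeeping.
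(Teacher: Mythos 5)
Your proposal is correct and follows essentially the same route as the paper's proof: triangle inequality over the finitely many words with $|\omega|\leq i$, substitution of the bound~\eqref{eq:PwDecay}, and a uniform bound of the decay factor $\lambda_{i-1}^{q}s^{|\omega|-q}$ by $\lambda_{i-1}s^{i-1}$ with the constants collected into $\gamma_i$. The only difference is cosmetic: the paper dispatches the scalar inequality by noting directly that $0<\lambda_{i-1}<1$ and $s\geq 1$ force the maximum of $\lambda_{i-1}^{q}s^{\ell-q}$ at $q=1$, $\ell=i$, whereas you re-derive $\lambda_{i-1}\leq 1\leq s$ from the spectral-radius hypothesis --- which in fact makes explicit the smallness requirement on $\varepsilon$ that the paper leaves implicit.
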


The proof of Claim~\ref{claim:u} is in Appendix~\ref{app:u}.
Note that even though $\bar X_i$ and $\bar X_{i - 1}$ are both projections of the state $X$, by the induction hypothesis, the trajectory of $\bar X_{i - 1}$ is fixed, i.e., a function of only time. Thus, despite $\bar X_{i - 1}[k]$ partially determining $\bar X_i[k]$, we can view $\bar X_{i - 1}$ in the dynamics of $\bar X_i$ as an exogenous signal.

By linear systems theory, we can express $\bar X_i[k]$ as the sum of a zero-input response $\bar X_i^\mathrm{zi}[k] = \bar A_i^k\bar X_i[0]$ and a zero-state response $\bar X_i^\mathrm{zs}[k] = \sum_{j = 0}^{k - 1}\bar A_i^{j}u_i[k - 1 - j]$. We now bound the zero-state response thus:
\begin{equation*}
\begin{aligned}
	\|\bar X_i^\mathrm{zs}[k]\| &\leq \sum_{j = 0}^{k - 1}\|\bar A_i^j\|\|u_i[k - 1 - j]\| \\
	&\leq \sum_{j = 0}^{k - 1}\sigma_i\Lambda_i^j\gamma_i\lambda_i^{k - 1 - j}\|\bar X_i[0]\| \qquad \text{(by Claim~\ref{claim:u})} \\
	&\leq \sigma_i\gamma_i\lambda_i^{k - 1}\|\bar X_i[0]\|\sum_{j = 0}^\infty\left(\frac{\Lambda_i}{\lambda_i}\right)^j.
\end{aligned}
\end{equation*}
Recall that for all $1 \leq i \leq p$, $\Lambda_i < \Lambda$, and that by the induction hypothesis, $\lambda_i \geq \Lambda$. Therefore, for all $1 \leq i \leq p$, $\lambda_i > \Lambda_i$. Hence,
\begin{equation*}
	\|\bar X_i^\mathrm{zs}[k]\| \leq \frac{\sigma_i\gamma_i}{\lambda_i - \Lambda_i}\lambda_i^k\|\bar X_i[0]\|.
\end{equation*}

Applying the triangle inequality to $\bar X_i[k] = \bar X_i^\mathrm{zi}[k] + \bar X_i^\mathrm{zs}[k]$, we have
\begin{equation*}
\begin{aligned}
	\|\bar X_i[k]\| &\leq \sigma_i\Lambda_i^k\|\bar X_i[0]\| + \frac{\sigma_i\gamma_i}{\lambda_i - \Lambda_i}\lambda_i^k\|\bar X_i[0]\| \\
	&\leq \underbrace{\sigma_i\left(1 + \frac{\gamma_i}{\lambda_i - \Lambda_i}\right)}_{=: \alpha_i}\lambda_i^k\|\bar X_i[0]\|.
\end{aligned}
\end{equation*}
This proves that the origin of $P_i\mc X = \g^n/\left(\g^{(i + 1)}\right)^n$ is semiglobally exponentially stable. This concludes the induction. Recall that $P_{p + j}\g = \g/\g^{(p + j)} = \g/0 \cong \g$, so step $i = p$ of the induction proves that the origin of $\mc X = \g^n$ is semiglobally exponentially stable.
\end{proof}

\begin{corollary}\label{cor:Nilpotent}
Let $\g$ be a nilpotent Lie algebra and $f : \mc X \times \mc W \to \mc X$ satisfy Assumption~\ref{asm:f}, where Assumption~\ref{asm:f}.\ref{asm:invariance} is satisfied with $\h = \g$. If $W$ is bounded, then the origin of $\mc X$ is semiglobally exponentially stable if $A$ is Schur.
\end{corollary}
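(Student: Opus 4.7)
The plan is to observe that Corollary~\ref{cor:Nilpotent} is an immediate specialization of Theorem~\ref{thm:Nilpotent} with the choice $s = 1$. Concretely, since $W$ is assumed to be bounded, I can set $\beta := \sup_{k \geq 0}\|W[k]\| < \infty$ and $s := 1$, giving $\|W[k]\| \leq \beta s^k$ for all $k \geq 0$, which verifies the input hypothesis of Theorem~\ref{thm:Nilpotent}.

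Next, I would check the spectral radius hypothesis. With $s = 1$, the expression $s^{\frac{p(1-p)}{2}}$ equals $1$ regardless of the nilindex $p$, so the requirement $\rho(A) < s^{\frac{p(1-p)}{2}}$ collapses to $\rho(A) < 1$, i.e.\ the assertion that $A$ is Schur, which is exactly the hypothesis of the corollary.

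Having verified both hypotheses of Theorem~\ref{thm:Nilpotent}, I would invoke that theorem to conclude that the origin of $\mc X$ is semiglobally exponentially stable. I do not expect any genuine obstacle here; the only subtlety worth mentioning explicitly is that ``bounded'' is used in the signal-norm sense induced by the norm on $\g$ (the same norm under which Theorem~\ref{thm:Nilpotent} is stated), so that $\beta$ is well defined, and that the choice $s = 1$ is admissible since Theorem~\ref{thm:Nilpotent} requires $s \geq 1$.
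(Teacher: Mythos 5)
Your proof is correct and matches the paper's own argument: the paper likewise sets $s = 1$ with $\beta$ a bound on $\|W\|$ and invokes Theorem~\ref{thm:Nilpotent}, under which the condition $\rho(A) < s^{\frac{p(1-p)}{2}} = 1$ reduces to $A$ being Schur. No issues.
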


\begin{proof}
If $W$ is bounded, then $\|W[k]\| \leq \beta s^k$, for $s = 1$ and some finite $\beta$. Apply Theorem~\ref{thm:Nilpotent}.
\end{proof}

\begin{remark}
If $\g$ has nilindex $1$, i.e., $\g$ is commutative, then the dynamics~\eqref{eq:sys} reduce to a linear time-invariant system. The authors exploited this for output regulation and synchronization on commutative matrix Lie groups in~\cite{McCarthy2018} and~\cite{McCarthy2017}, respectively.
\end{remark}

\begin{example}\label{ex:Nilpotent}
In this example, we illustrate the application of Theorem~\ref{thm:Nilpotent} to control design. We will first define a simple regulator problem, then, using Theorem~\ref{thm:Nilpotent}, we will show that the error dynamics are semiglobally exponentially stable.

Let $\g$ be the $3$-dimensional Heisenberg algebra, which is defined by the commutator relations
\begin{equation*}
	[h_1,h_2] = - h_3, \quad [h_1,h_3] = 0, \quad [h_2,h_3] = 0.
\end{equation*}

The lower central series of $\g$ is $\g =: \g^{(1)} \supset \g^{(2)} \supset \g^{(3)} = 0$, where $\g^{(2)} = \mathrm{Lie}_\Real\{h_3\} \cong \mathrm{Span}_\Real\{h_3\}$, thus, $\g$ has nilindex $p = 2$.


Consider the right-invariant dynamical system with state $X \in \G$
\begin{equation*}
	\dot X = (h_1u_1 + h_2u_2 + h_3u_3)X,
\end{equation*}
where $u \in \Real^3$ is the control input. Suppose this system is sampled with period $T = 1$. The step-invariant transform of this system is
\begin{equation}\label{eq:NilpotentX+}
	X^+ = \exp(h_1u_1 + h_2u_2 + h_3u_3)X.
\end{equation}

Suppose we want $X$ to track a reference that is given implicitly by the tracking error
\begin{equation*}
	E = \exp((h_1 + 2h_2 + 3h_3)w)X,
\end{equation*}
where $w \in \Real$ is a known exogenous signal, which evolves according to
\begin{equation}\label{eq:Nilpotentw+}
	w^+ = 2w.
\end{equation}

The goal is to choose $u$ such that $E$ tends to the identity in $\G$. This is equivalent to driving $\Log(E) \in \g$ to $0$, where we express $e := \Log(E)$ in the basis $\{h_1,h_2,h_3\}$:
\begin{equation*}
\begin{aligned}
	\Log(E) &=: e_1h_1 + e_2h_2 + e_3h_3.
\end{aligned}
\end{equation*}

Using~\eqref{eq:NilpotentX+} and the definition of $E$, we find
\begin{equation*}
\begin{aligned}
	E^+ &= \exp(2(h_1 + 2h_2 + 3h_3)w)\exp(h_1u_1 + h_2u_2 + h_3u_3)X \\
	&= \exp(2(h_1 + 2h_2 + 3h_3)w)\exp(h_1u_1 + h_2u_2 + h_3u_3)\exp(-(h_1 + 2h_2 + 3h_3)w)E.
\end{aligned}
\end{equation*}

Using a generalization of the Baker-Campbell-Hausdorff formula~\cite[\S 5]{Day1991}, we express the error dynamics on the Lie algebra:
\begin{equation*}
\begin{aligned}
	e^+ &= 2(h_1 + 2h_2 + 3h_3)w + (h_1u_1 + h_2u_2 + h_3u_3) - (h_1 + 2h_2 + 3h_3)w + e \\
		&\qquad+ \frac{1}{2}[2(h_1 + 2h_2 + 3h_3)w,h_1u_1 + h_2u_2 + h_3u_3] + \frac{1}{2}[2(h_1 + 2h_2 + 3h_3)w,-(h_1 + 2h_2 + 3h_3)w] 
			\\ &\qquad\quad+ \frac{1}{2}[2(h_1 + 2h_2 + 3h_3)w,e] 
		+ \frac{1}{2}[h_1u_1 + h_2u_2 + h_3u_3,-(h_1 + 2h_2 + 3h_3)w] + \frac{1}{2}[h_1u_1 + h_2u_2 + h_3u_3,e] \\
		&\qquad\qquad+ \frac{1}{2}[-(h_1 + 2h_2 + 3h_3)w,e] \\
	&= (w + u_1)h_1 + (2w + u_2)h_2 + (3w + u_3)h_3 + e \\
		&\qquad+ \frac{1}{2}[(w + u_1)h_1 + (2w + u_2)h_2 + (3w + u_3)h_3,e] - \frac{3}{2}[h_1u_1 + h_2u_2 + h_3u_3,\underbrace{(h_1 + 2h_2 + 3h_3)w}_{=: W}].
\end{aligned}
\end{equation*}

The independent signal $W$ evolves according to
\begin{equation*}
\begin{aligned}
	W^+ &= (h_1 + 2h_2 + 3h_3)w^+ \\
	&= 2(h_1 + 2h_2 + 3h_3)w \\
	&= 2W,
\end{aligned}
\end{equation*}
which yields
\begin{equation*}
\begin{aligned}
	W[k] &= 2^kW[0] \\
	\|W[k]\| &= 2^k\|W[0]\|.
\end{aligned}
\end{equation*}
Thus, setting $\beta = \|W[0]\|$ and $s = 2$, we have $\|W[k]\| \leq \beta s^k$.

To apply Theorem~\ref{thm:Nilpotent} to the dynamics of $e$, we must choose the control law $u$ such that Assumption~\ref{asm:f} is satisfied, and the linear part of~\eqref{eq:Nilpotente+} has spectral radius smaller than $s^{-1} = \frac{1}{2}$. After choosing our control law $u$, we will verify that each of Assumptions~\ref{asm:f}.\ref{asm:ClassA}, \ref{asm:f}.\ref{asm:vanish}, and~\ref{asm:f}.\ref{asm:invariance} are satisfied. Per Proposition~\ref{prop:1X}, Assumption~\ref{asm:f}.\ref{asm:vanish} is satisfied only if the linear part of the dynamics does not depend on $W$. This observation, in part, motivates the control law
\begin{equation*}
	u = \begin{bmatrix}-0.75 & 0.25 & 0 \\ -0.25 & -0.75 & 0 \\ 0 & 0 & -0.99\end{bmatrix}e - \begin{bmatrix}1 \\ 2 \\ 3\end{bmatrix}w.
\end{equation*}

Substituting into the dynamics of $e$, we obtain
\begin{multline}\label{eq:Nilpotente+}
	e^+ = (0.25e_1 + 0.25e_2)h_1 + (-0.25e_1 + 0.25e_2)h_2 + (0.01e_3)h_3  \\
		+ \frac{1}{2}[(0.25e_1 + 0.25e_2)h_1 + (-0.25e_1 + 0.25e_2)h_2,e_1h_1 + e_2h_2] \\
		- \frac{3}{2}[(0.25e_1 + 0.25e_2)h_1 + (-0.25e_1 + 0.25e_2)h_2,(h_1 + 2h_2)w].
\end{multline}
%

The dynamics~\eqref{eq:Nilpotente+} are of the form
\begin{equation*}
	e^+ = Ae + \sum_{|\omega| = 2}c_\omega\omega,
\end{equation*}
where in the basis $\g = \mathrm{Lie}_\Real\{h_1,h_2,h_3\} \cong \mathrm{Span}_\Real\{h_1,h_2,h_3\}$, $A : \g \to \g$ has matrix representation
\begin{equation}\label{eq:MatA}
	\mathrm{Mat}A = \begin{bmatrix}0.25 & 0.25 & 0 \\ -0.25 & 0.25 & 0 \\ 0 & 0 & 0.01\end{bmatrix},
\end{equation}

We now verify that~\eqref{eq:Nilpotente+} satisfies Assumption~\ref{asm:f}. By the form of~\eqref{eq:Nilpotente+} and nilpotency of $\g$, the dynamics of $e$ are clearly class-$\mc A$, thus Assumption~\ref{asm:f}.\ref{asm:ClassA} is satisfied.

That $e = 0$ is an equilibrium is verified by substituting $e = 0$ into~\eqref{eq:Nilpotente+}. To verify that $e = 0$ is the only equilibrium, note that by the definition of the Lie bracket on $\g$, the bracket terms in~\eqref{eq:Nilpotente+} lie in $\mathrm{Span}_\Real\{h_3\}$. Therefore, a point $e$ is an equilibrium only if
\begin{equation*}
	\begin{bmatrix}e_1 \\ e_2\end{bmatrix} = \begin{bmatrix}0.25 & 0.25 \\ -0.25 & 0.25\end{bmatrix}\begin{bmatrix}e_1 \\ e_2\end{bmatrix},
\end{equation*}
which holds if and only if $e_1 = e_2 = 0$. If $e_1 = e_2 = 0$, then~\eqref{eq:Nilpotente+} reduces to $e[k + 1] = 0.01e_3h_3$, whose only equilibrium is $e_3 = 0$. This verifies Assumption~\ref{asm:f}.\ref{asm:vanish}.

The block diagonal structure of~\eqref{eq:MatA} makes it clear that $\g^{(2)} = \mathrm{Lie}_\Real\{h_3\} \cong \mathrm{Span}_\Real\{h_3\}$ is invariant. By Corollary~\ref{cor:Ainvariant}, this verifies Assumption~\ref{asm:f}.\ref{asm:invariance}.
By Theorem~\ref{thm:Nilpotent}, $e = 0$ is semiglobally exponentially stable if $\rho(A) < s^{-1} = \frac{1}{2}$. The eigenvalues of~\eqref{eq:MatA} are $\{-0.25 + i0.25, -0.25 - i0.25, 0.01\}$, thus $\rho(A) = \frac{1}{2\sqrt{2}}$. Therefore, $e = 0$ is semiglobally exponentially stable.
We simulate the dynamics of the tracking error using the initial conditions $e[0] = 3h_1 + 2h_2 - h_3$, $w[0] = 1$. The trajectory of $e$ is in Figure~\ref{fig:Nilpotent}. As can be seen, $e$ tends to $0$.
\begin{figure}[htb!]
\centering
\includegraphics[width=1\columnwidth]{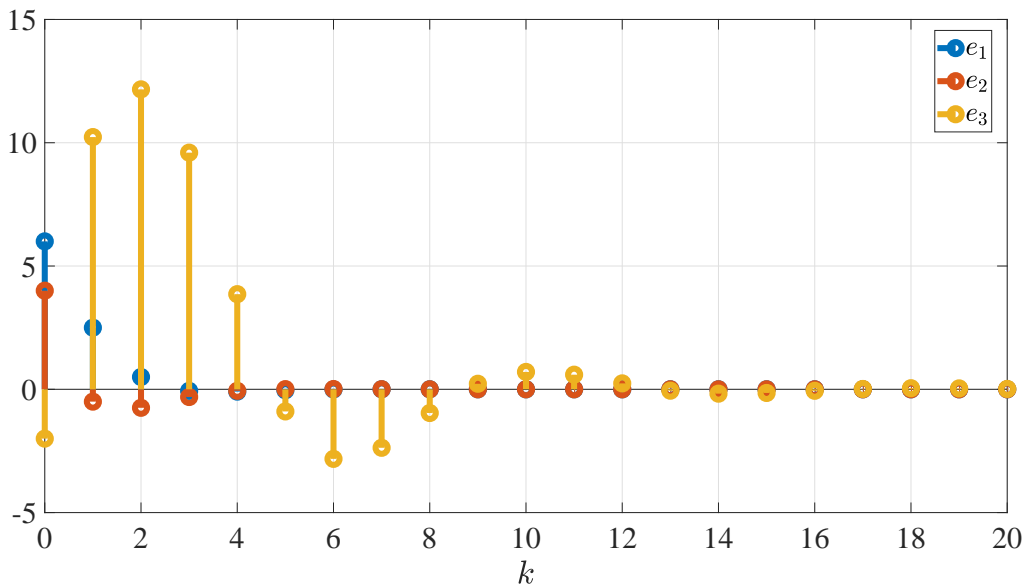}
\caption{The tracking error $e \in \g$ at the sampling instants.}
\label{fig:Nilpotent}
\end{figure}
\exsymbol\end{example}

\section{Solvable Lie Algebras}
\label{sec:Solvable}

In this section we present various global stability results in the case that $\g$ is solvable, but not necessarily nilpotent. Our analysis exploits the structure endowed by Theorem~\ref{thm:DgNilpotent}.

\begin{theorem}\label{thm:Solvable}
Let $\g$ be a solvable Lie algebra, and define $\mc X := \g^n$ and $\mc W := \g^r$. Consider the dynamics~\eqref{eq:sys} and suppose $f : \mc X \times \mc W \to \mc X$ satisfies Assumption~\ref{asm:f}. If $A$ is Schur, and as $k \to \infty$, $W[k] \to \h^r$, then there exists $\beta > 0$ such that if $\limsup_{k \to \infty}\|W[k]\| \leq \beta$, then the origin of $\mc X$ is globally attractive.
\end{theorem}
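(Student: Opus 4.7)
The plan is finite induction on $i = 0, 1, \ldots, p$, where $p$ is the nilindex of the nilpotent ideal $\h$, showing $P_i X[k] \to 0$ at each level in the quotient $\mc X/\h^{(i+1)}$. A routine Jacobi-identity argument shows that each $\h^{(i)}$ is an ideal of $\g$, not merely of $\h$, so Proposition~\ref{prop:QuotientDynamics} produces well-defined quotient dynamics at every level. Because $\h^{(p+1)} = 0$, the final step of the induction collapses to the original system on $\mc X$ and delivers $X[k] \to 0$.

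The base case $i = 0$ exploits the commutativity of $\g/\h$: since $\h \supseteq [\g,\g]$, every bracket expression of length at least two vanishes under the canonical projection $P_0 : \g \to \g/\h$, so the quotient dynamics on $\mc X/\h$ reduce to the linear system $\bar X_0^+ = \bar A_0 \bar X_0$ with $\bar A_0$ Schur; hence $P_0 X[k] \to 0$ exponentially, regardless of $X[0]$, and the hypothesis $W[k] \to \h^r$ gives $P_0 W[k] \to 0$. For the inductive step, the quotient dynamics take the form $\bar X_i^+ = \bar A_i \bar X_i + u_i[k]$, with $\bar A_i$ Schur and $u_i := \sum_\omega c_\omega \otimes P_i\omega$. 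Decomposing each letter as $Y = \imath_0 P_0 Y + (Y - \imath_0 P_0 Y) \in \imath_0(\g/\h) \oplus \h$ and expanding each bracket over this decomposition, Theorem~\ref{thm:StronglyCentral} verifies that summands involving sufficiently many $\h$-letters lie in $\h^{(i+1)}$ and are annihilated by $P_i$; the surviving summands either carry a decaying factor $P_0 X[k]$ or $P_0 W[k]$, or else reduce to a pure $\h$-bracket whose dynamics constitute a nilpotent subsystem on $\h^n$ driven by those same decaying quantities and by $W$. Concluding $u_i[k] \to 0$, the standard fact that a Schur linear system with vanishing exogenous input has vanishing state yields $\bar X_i[k] \to 0$.

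The main obstacle is that the class-$\mc A$ bound~\eqref{eq:MyConvergence} is only local: it holds in some neighborhood of the origin of $\mc X \times \mc W$, whereas the termwise bounding of $u_i$ above requires that $(X[k], W[k])$ already lie in this neighborhood. The constant $\beta$ exists precisely to secure this: choosing it small enough relative to the radius of convergence of~\eqref{eq:MyConvergence} and the operator norms of $A$, $\imath_0$, and each $\bar A_i$, the hypothesis $\limsup_k \|W[k]\| \leq \beta$, together with the exponential decay of $P_0 X[k]$ and the Schur contraction of $A$ near the origin, both prevents transient blow-up of $\|X[k]\|$ and eventually pushes the trajectory into the convergence neighborhood. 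Quantifying this $\beta$, and separately establishing uniform boundedness of $\|X[k]\|$ along the entire trajectory so that the inductive argument can be invoked, is the technical crux of the proof.
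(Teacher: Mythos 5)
Your overall architecture --- induction on the quotients $\mc X/\h^{(i+1)}$, a linear base case on $\mc X/\h$, and a perturbed-linear inductive step --- matches the paper, but two of your key steps have genuine gaps. First, the single-level splitting $Y = \imath_0\circ P_0 Y + (\Id_\g - \imath_0\circ P_0)Y$ does not kill enough terms: by Theorem~\ref{thm:StronglyCentral} a summand is annihilated by $P_i$ only when the degrees of its $\h$-letters sum to at least $i+1$, so for $i \geq 2$ you are left with surviving summands containing up to $i$ letters bounded only by the full state norm --- including pure $\h$-brackets of short length carrying no decaying factor whatsoever. Your remedy of treating these as ``a nilpotent subsystem on $\h^n$'' is not developed and in effect re-poses the original problem one level down. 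The paper's Lemma~\ref{lem:SolvBracket} instead uses a two-level decomposition: either every letter is replaced by $\imath_{i-1}\circ P_{i-1}Y_j$ (giving $\hat\omega_{i-1}$, which vanishes by the induction hypothesis), or exactly one letter carries $(\Id_\g - \imath_{i-1}\circ P_{i-1})$, landing in $\h^{(i)}$, while all remaining letters carry $\imath_0\circ P_0$; every cross term with a second $\h$-letter then lies in $\h^{(i+1)}$ and dies. This guarantees each surviving term has at most one factor of $\|\bar X_i\|$ or $\|W\|$, with all other factors being $\|\bar X_0\|$ or $\|\bar W_0\|$, which decay.

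Second, your conclusion ``$u_i[k]\to 0$, hence the Schur system with vanishing input has vanishing state'' presupposes exactly the uniform boundedness of $\|X[k]\|$ that you defer to the end as ``the technical crux''; since $u_i$ depends on $\bar X_i$ itself, the argument is circular as stated, and the paper never establishes a priori boundedness of the trajectory. Instead it derives a bound of the form $\|\bar X_i^+\| \leq \|\bar A_i\|\|\bar X_i\| + \epsilon_k(\|\bar X_i\| + \|W\|) + \delta_k$ with $\epsilon_k,\delta_k \to 0$, assumes for contradiction that $\liminf_k\|\bar X_i[k]\| > 0$, divides by $\|\bar X_i\|$, and obtains $\limsup_k\|\bar X_i^+\|/\|\bar X_i\| \leq \|\bar A_i\| < 1$ in a norm adapted via Theorem~\ref{thm:Anorm} --- a contradiction. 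Hence $\liminf_k\|\bar X_i[k]\| = 0$, so the trajectory eventually enters the basin of attraction of the locally exponentially stable origin (Proposition~\ref{prop:Linearization} plus a standard perturbation argument); it is this local robustness step, not the radius of convergence of the class-$\mc A$ series, that determines the constant $\beta$ in the statement. Until you supply either the refined decomposition or an independent boundedness argument, the inductive step does not close.
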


Theorem~\ref{thm:Solvable} is somewhat weaker than Theorem~\ref{thm:Nilpotent} for the nilpotent case. Although Theorem~\ref{thm:Solvable} would of course apply when the Lie algebra is nilpotent, Theorem~\ref{thm:Nilpotent} is not a special case of Theorem~\ref{thm:Solvable}. The proof of Theorem~\ref{thm:Solvable} takes a similar geometric approach to that of Theorem~\ref{thm:Nilpotent}, but the analysis is significantly complicated by the nontrivial quotient space $\mf K := \g / \h$. The dynamics on $\mf K$ will be treated from an analysis perspective, rather than using geometric arguments, and be shown to converge to the origin via contradiction. Throughout this section, let $P_i : \g \to \g/\h^{(i + 1)} \cong \mf K \oplus \h / \h^{(i + 1)}$ denote the canonical projection of $\g$ onto $\h^{(i + 1)}$. We will require the following lemma, which is the solvable analogue of Lemma~\ref{lem:Pw} in the nilpotent case.

\begin{lemma}\label{lem:SolvBracket}
Let $\g$ be a solvable Lie algebra. Then, given a word $\omega$ with letters $Y_1,\ldots,Y_{|\omega|}$,
%
\begin{equation*}
\begin{aligned}
	P_i\omega &= P_i [\imath_{i - 1} \circ P_{i - 1}Y_1,[\ldots,\imath_{i - 1} \circ P_{i - 1}Y_{|\omega|}]\cdots] \\
		&\qquad+ P_i[\overbrace{(\Id_\g - \imath_{i - 1} \circ P_{i - 1})}^{\text{$1$st letter}}Y_1,[\imath_0 \circ P_0 Y_2,[\ldots,\imath_0 \circ P_0 Y_{|\omega|}]\cdots] \\
		&\qquad\quad+ P_i[\imath_0 \circ P_0 Y_1,[\overbrace{(\Id_\g - \imath_{i - 1} \circ P_{i - 1})}^{\text{$2$nd letter}}Y_2,[\imath_0 \circ P_0 Y_3,[\ldots,\imath_0 \circ P_0 Y_{|\omega|}]\cdots] + \cdots \\
		&\qquad\quad\quad \cdots + P_i[\imath_0 \circ P_0 Y_1,[\ldots,[\imath_0 \circ P_0 Y_{|\omega| - 1},\underbrace{(\Id_\g - \imath_{i - 1} \circ P_{i - 1})}_{\text{$|\omega|$th letter}}Y_{|\omega|}]\cdots].
\end{aligned}
\end{equation*}
\end{lemma}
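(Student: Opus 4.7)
The plan is to decompose each letter as $Y_j = a_j + \delta_j$, with $a_j := \imath_{i-1} \circ P_{i-1} Y_j$ and $\delta_j := (\Id_\g - \imath_{i-1} \circ P_{i-1})Y_j \in \h^{(i)}$ (the latter by Lemma~\ref{lem:CompProj}), then expand the nested bracket by multilinearity into $2^{|\omega|}$ pieces indexed by the subset $S \subseteq \{1,\ldots,|\omega|\}$ of positions where $\delta$ was chosen, and show that, modulo $\h^{(i+1)}$, only the pieces with $|S| \leq 1$ survive under $P_i$.

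The heart of the argument is the following auxiliary fact: any nested Lie bracket of letters from $\g$ that contains at least one letter in $\h^{(i)}$ and another (distinct) letter in $\h$ belongs to $\h^{(i+1)}$. I prove this by induction on bracket length, with base case $[\h^{(i)}, \h] \subseteq \h^{(i+1)}$ given by Theorem~\ref{thm:StronglyCentral} applied inside the nilpotent algebra $\h$. The inductive step on a bracket $[X, B]$ splits on whether both distinguished letters lie in $B$ (apply induction to $B$ and use that $\h^{(i+1)}$ is a $\g$-ideal) or whether $X$ is one of the distinguished letters (in which case $B$ contains the other distinguished letter together with $\g$-letters, so $B$ lies in $\h^{(i)}$ or $\h$ as appropriate because those subspaces are $\g$-ideals, and the outer bracket then lands in $\h^{(i+1)}$). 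A preparatory step is to verify that each $\h^{(j)}$ is a $\g$-ideal, done by a separate induction using the Jacobi identity and the hypothesis that $\h \supseteq [\g,\g]$ is already a $\g$-ideal.

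Given this fact, the remaining bookkeeping is straightforward. The $S = \emptyset$ piece is exactly the first term of the lemma. Every piece with $|S| \geq 2$ contains two letters in $\h^{(i)} \subseteq \h$ at distinct positions, so the auxiliary fact kills it under $P_i$. For $S = \{j\}$, I identify the piece with the $j$-th subsequent term of the lemma modulo $\h^{(i+1)}$ as follows: since $\h^{(i)} \subseteq \h = \ker P_0$, both $\imath_{i-1} \circ P_{i-1}$ and $\imath_0 \circ P_0$ coincide with $\Id_\g$ modulo $\h$, so $a_k - \imath_0 \circ P_0 Y_k \in \h$ for each $k$. Substituting $a_k = \imath_0 \circ P_0 Y_k + (a_k - \imath_0 \circ P_0 Y_k)$ at each position $k \neq j$ one at a time and expanding, every error term places an $\h$ letter at position $k$ alongside $\delta_j \in \h^{(i)}$ at position $j$, so the auxiliary fact annihilates it under $P_i$; what remains after all $|\omega|-1$ substitutions is exactly the $j$-th subsequent term of the claim.

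The main obstacle is the auxiliary fact. Theorem~\ref{thm:StronglyCentral} is by itself insufficient, because it is a statement about brackets \emph{within} $\h$, whereas the nondistinguished letters here live in the ambient $\g$. The induction circumvents this by repeatedly using that $\h^{(j)}$ is a $\g$-ideal to absorb ambient $\g$-letters into $\h^{(i)}$ or $\h$, thereby reducing each step to a configuration where Theorem~\ref{thm:StronglyCentral} directly applies.
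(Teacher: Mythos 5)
Your proof is correct and follows essentially the same route as the paper's: decompose each letter via $\Id_\g = \imath_{i-1}\circ P_{i-1} + (\Id_\g - \imath_{i-1}\circ P_{i-1})$ and use Theorem~\ref{thm:StronglyCentral} to discard every cross term containing two letters in $\h$, one of which lies in $\h^{(i)}$; the paper merely organizes the expansion sequentially, peeling one letter at a time and using the operator identity $\imath_0\circ P_0\circ\imath_{i-1}\circ P_{i-1}=\imath_0\circ P_0$ (its Claim on the commuting triangle) where you use the congruence $\imath_{i-1}\circ P_{i-1}Y_k\equiv\imath_0\circ P_0 Y_k \pmod{\h}$, while you expand all $2^{|\omega|}$ terms at once and sort them by the set $S$ of positions carrying the $\h^{(i)}$-component. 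Your explicit verification that each $\h^{(j)}$ is an ideal of $\g$ (not merely of $\h$) makes precise a fact the paper uses tacitly when it absorbs the ambient letters $Y_k\in\g$ into $\h$ before invoking Theorem~\ref{thm:StronglyCentral}.
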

The proof of Lemma~\ref{lem:SolvBracket} is in Appendix~\ref{app:SolvBracket}.

\begin{proof}[Theorem~\ref{thm:Solvable}]
Analogous to the proof of Theorem~\ref{thm:Nilpotent}, we will examine the quotient dynamics on $\mc X / \h^{(i + 1)}$, where $i \geq 0$. By Proposition~\ref{prop:QuotientDynamics}, the quotient dynamics on $\mc X / \h^{(i + 1)}$ are
\begin{equation}\label{eq:SolvQuotientDynamics}
	\bar X_i^+ = \bar A_i\bar X_i + \sum_\omega c_\omega \otimes (P_i\omega).
\end{equation}

We begin by examining the quotient dynamics on $\mc X / \h = \mf K^n$:
\begin{equation}\label{eq:SolvX0}
	\bar X_0^+ = \bar A_0\bar X_0,
\end{equation}
which is an unforced linear time-invariant system. That $A$ is Schur implies $\bar A_0$ is Schur, so the origin of $P_0\mc X = \g^n / \h^n \cong \mf K^n$ is globally exponentially stable under the quotient dynamics~\eqref{eq:SolvQuotientDynamics}.

We assert the induction hypothesis that the origin of $P_{i - 1}\mc X \cong \mf K^n \oplus \left(\h / \h^{(i)}\right)^n$ is globally asymptotically stable. We now show that the origin of $P_i\mc X \cong \mf K^n \oplus (\h / \h^{(i + 1)})^n$ is globally asymptotically stable.

By Lemma~\ref{lem:SolvBracket},
\begin{multline*}
	P_i\omega = P_i \overbrace{[\imath_{i - 1} \circ P_{i - 1}Y_1,[\ldots,\imath_{i - 1} \circ P_{i - 1}Y_{|\omega|}]\cdots]}^{\hat\omega_{i - 1} :=} \\
		+ P_i[(\Id_\g - \imath_{i - 1} \circ P_{i - 1})Y_1,[\imath_0 \circ P_0 Y_2,[\ldots,\imath_0 \circ P_0 Y_{|\omega|}]\cdots] + \cdots \\
		\cdots + P_i[\imath_0 \circ P_0 Y_1,[\ldots,[\imath_0 \circ P_0 Y_{|\omega| - 1},(\Id_\g - \imath_{i - 1} \circ P_{i - 1})Y_{|\omega|}]\cdots].
\end{multline*}

By the induction hypothesis, each term $P_{i - 1}Y_j$ in $\hat \omega_{i - 1}$ tends to zero, which implies $\hat\omega_{i - 1} \to 0$. We now show $P_i\omega \to 0$. By the result discussed in Remark~\ref{rem:Norm}, Lemma~\ref{lem:SolvBracket}, and that $P_i$ is a morphism of algebras, the norm of each projected word can be bounded thus
\begin{equation}\label{eq:PiwBound}
	\|P_i\omega\| \leq \|\hat\omega_{i -1}\| + \mu^{|\omega| - 1}\sum_{j = 1}^{|\omega|}\left(\|P_i \circ (\Id_\g - \imath_{i - 1} \circ P_{i - 1})Y_j\|\prod_{\ell \neq j}\|P_i \circ \imath_0 \circ P_0Y_\ell\|\right).
\end{equation}

By submultiplicativity of operator norms and Proposition~\ref{prop:Pnorm}, we have 
\begin{equation}\label{eq:i0}
	\|P_i \circ \imath_0 \circ P_0 Y_j\| \leq \|\imath_0 \circ P_0 Y_j\| \leq \|\imath_0\|\|P_0 Y_j\|.
\end{equation}
By Proposition~\ref{prop:Pnorm} and the triangle inequality, we have
\begin{equation}\label{eq:Comp}
	\|(P_i - P_i \circ \imath_{i - 1} \circ P_{i - 1})Y_j\| \leq \|P_iY_j\| + \|\imath_{i - 1}\|\|P_{i - 1}Y_j\| \leq (1 + \|\imath_{i - 1}\|)\|P_iY_j\|,
\end{equation}
where the second inequality follows from Lemma~\ref{lem:QuotientInequality}. We partition the words into the sets $\Omega_X := \{\omega : \text{every letter is in } \widetilde X\}$ and $\Omega_W := \{\omega : \text{at least one letter is in } \widetilde W\}$. First consider $\omega \in \Omega_X$. Applying~\eqref{eq:i0} and~\eqref{eq:Comp} to~\eqref{eq:PiwBound}, we obtain
\begin{equation}\label{eq:PiOmegaXBound}
\begin{aligned}
	\|P_i\omega\| &\leq \|\hat\omega_{i -1}\| + (\mu\|\imath_0\|)^{|\omega| - 1}(1 + \|\imath_{i - 1}\|)\|\bar X_i\|\sum_{j = 1}^{|\omega|}\prod_{\ell \neq j}\|P_0Y_\ell\| \\
	&\leq \|\hat\omega_{i -1}\| + (\mu\|\imath_0\|)^{|\omega| - 1}|\omega|(1 + \|\imath_{i - 1}\|)\|\bar X_0\|^{|\omega| - 1}\|\bar X_i\|,
\end{aligned}
\end{equation}
where we have used $\|P_iX_j\| \leq \|(I_n \otimes P_i) X\| = \|\bar X_i\|$, for all $j \in \{1,\ldots,n\}$.

Now consider $\omega \in \Omega_W$ and let $1 \leq q \leq |\omega| - 1$ be the number of letters in $\widetilde X$. Without loss of generality, suppose $Y_1,\ldots,Y_q \in \widetilde X$, and $Y_{q + 1},\ldots,Y_{|\omega|} \in \widetilde W$. Then
\begin{equation*}
	\sum_{j = 1}^q\prod_{\ell \neq j}\|P_0Y_\ell\| \leq (\mu\|\imath_0\|)^{|\omega| - 1}(1 + \|\imath_{i - 1}\|)\|\bar X_i\|q\|\bar X_0\|^{q - 1}\|\bar W_0\|^{|\omega| - q}
\end{equation*}
and
\begin{equation*}
	\sum_{j = q + 1}^{|\omega|}\prod_{\ell \neq j}\|P_0Y_\ell\| \leq (\mu\|\imath_0\|)^{|\omega| - 1}(1 + \|\imath_{i - 1}\|)\|W\|(|\omega| - q)\|\bar X_0\|^q\|\bar W_0\|^{|\omega| - q - 1}.
\end{equation*}
Using the bounds $q, |\omega| - q \leq |\omega| - 1$, we have
\begin{equation}\label{eq:PiOmegaWBound}
	\|P_i\omega\| \leq \|\hat\omega_{i -1}\| \\ + (\mu\|\imath_0\|)^{|\omega| - 1}(1 + \|\imath_{i - 1}\|)(\|\bar X_i\| + \|W\|)(|\omega| - 1)\max\{\|\bar X_0\|,\|\bar W_0\|\}^{|\omega| - 1}.
\end{equation}
%

Using~\eqref{eq:PiOmegaXBound} and~\eqref{eq:PiOmegaWBound}, we upper bound $\|\bar X_i^+\|$:
\begin{equation*}
\begin{aligned}
	\|\bar X_i^+\| &\leq \|\bar A_i\|\|\bar X_i\| 
		+ \sum_{\omega \in \Omega_W}\|c_\omega\|\|P_i\omega\| 
		+ \sum_{\omega \in \Omega_X}\|c_\omega\|\|P_i\omega\| \\
	&\leq \|\bar A_i\|\|\bar X_i\| 
		+ \sum_{\omega \in \Omega_W \cup \Omega_X}\|c_\omega\|\|\hat\omega_{i -1}\| \\
		&\quad\enspace+ (1 + \|\imath_{i - 1}\|)\|\bar X_i\|\sum_{\omega \in \Omega_X}|\omega|(\mu\|\imath_0\|)^{|\omega| - 1}\|c_\omega\|\|\bar X_0\|^{|\omega| - 1} \\
		&\qquad+ (1 + \|\imath_{i - 1}\|)(\|\bar X_i\| + \|W\|)\sum_{\omega \in \Omega_W}(|\omega| - 1)(\mu\|\imath_0\|)^{|\omega| - 1}\|c_\omega\|\max\{\|\bar X_0\|,\|\bar W_0\|\}^{|\omega| - 1} \\
	&\leq \|\bar A_i\|\|\bar X_i\| 
		+ \sum_\omega\|c_\omega\|\|\hat\omega_{i -1}\| \\
		&\qquad + (1 + \|\imath_{i - 1}\|)(2\|\bar X_i\| + \|W\|)\sum_\omega|\omega|(\mu\|\imath_0\|)^{|\omega| - 1}\|c_\omega\|\max\{\|\bar X_0\|,\|\bar W_0\|\}^{|\omega| - 1}.
\end{aligned}
\end{equation*}


\begin{claim}\label{claim:Converges}
There exists $\varrho > 0$ such that for all $\|\bar X_0\|,\|\bar W_0\| < \varrho$,
\begin{equation}\label{eq:OmegaSeries}
	\sum_\omega|\omega|(\mu\|\imath_0\|)^{|\omega| - 1}\|c_\omega\|\max\{\|\bar X_0\|,\|\bar W_0\|\}^{|\omega| - 1} < \infty.
\end{equation}
\end{claim}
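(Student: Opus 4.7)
The plan is to reduce Claim~\ref{claim:Converges} to the elementary fact that a convergent real power series may be differentiated term-by-term inside its radius of convergence. By Assumption~\ref{asm:f}.\ref{asm:ClassA} and Proposition~\ref{prop:NormClassA}, there exists $R > 0$ such that
\[
\sum_\omega \mu^{|\omega|-1}\|c_\omega\|\,\|Y_{\omega_1}\|\cdots\|Y_{\omega_{|\omega|}}\| < \infty
\]
whenever each letter satisfies $\|Y_j\| \leq R$. Specializing to letters of norm exactly $R$ collapses this into the scalar series
\[
\Phi(R) := \sum_{n \geq 2} b_n R^n, \qquad b_n := \sum_{|\omega|=n} \mu^{n-1}\|c_\omega\|,
\]
whose convergence (a permissible rearrangement by Tonelli's theorem applied to counting measure, since every summand is nonnegative) shows that the power series $\Phi(t) := \sum_n b_n t^n$ has radius of convergence at least $R$.

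Inside its radius of convergence, $\Phi$ is real-analytic and so admits term-by-term differentiation: for every $0 \leq t < R$,
\[
\Phi'(t) = \sum_n n\, b_n\, t^{n-1} = \sum_\omega |\omega|\,\mu^{|\omega|-1}\|c_\omega\|\, t^{|\omega|-1} < \infty,
\]
where the second equality un-groups the length-indexed sum back into a word-indexed sum, again justified by nonnegativity. Setting $t := \|\imath_0\|\max\{\|\bar X_0\|,\|\bar W_0\|\}$ and using $(\mu\|\imath_0\|)^{|\omega|-1} = \mu^{|\omega|-1}\|\imath_0\|^{|\omega|-1}$, the right-hand side becomes precisely the series~\eqref{eq:OmegaSeries}. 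Choosing $\varrho := R/\|\imath_0\|$ (or any smaller positive value; if $\|\imath_0\|=0$ the claim is vacuous because every term of~\eqref{eq:OmegaSeries} then vanishes) forces $t < R$ whenever $\|\bar X_0\|,\|\bar W_0\| < \varrho$, and the claim follows.

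The only delicate step is the two rearrangements between word-indexed and length-indexed sums; these present no real obstacle because every summand is nonnegative, so absolute convergence in one form implies it in the other. Everything else is routine bookkeeping: once the passage to the scalar power series $\Phi$ is legitimised, the extra factor of $|\omega|$ that distinguishes~\eqref{eq:OmegaSeries} from the class-$\mc A$ bound is absorbed, losslessly, by one term-by-term differentiation.
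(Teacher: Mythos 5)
Your proof is correct. The route differs from the paper's only in packaging: the paper regroups the class-$\mc A$ bound by word length, extracts $\limsup_{\ell}\sqrt[\ell]{\sum_{|\omega|=\ell}\|c_\omega\|}\leq(\varrho_1\mu)^{-1}$ via the root test, applies the root test a second time to absorb the factor $|\omega|$ (using $\sqrt[\ell]{\ell}\to 1$) and the factor $\|\imath_0\|^{|\omega|-1}$, and finally invokes a comparison test to reconcile the exponent $|\omega|-1$ with $|\omega|$; you instead package the grouped coefficients into a scalar power series $\Phi$ and invoke term-by-term differentiation, which handles the factor $|\omega|$ and the exponent shift in a single stroke, with the substitution $t=\|\imath_0\|\max\{\|\bar X_0\|,\|\bar W_0\|\}$ absorbing the $\|\imath_0\|$ factor. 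The two arguments rest on the same fact -- that multiplying the $n$th coefficient by $n$ does not change the radius of convergence -- so neither is more general, but yours is tidier and avoids the paper's slightly awkward final step of shrinking $\varrho$ to $\varrho_2^2$. Your regroupings between word-indexed and length-indexed sums are legitimate for the reason you give (nonnegative terms), and your choice of $R$ strictly inside the class-$\mc A$ neighbourhood, together with the degenerate case $\|\imath_0\|=0$, covers the needed edge cases.
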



The proof of Claim~\ref{claim:Converges} is in Appendix~\ref{app:Converges}. First, note that the hypothesis $W[k] \to \h^r$ implies $\bar W_0 \to 0$. Now, since~\eqref{eq:OmegaSeries} converges for $\|\bar X_0\|,\|\bar W_0\|$ sufficiently small, it follows that since $\bar X_0$ and $\bar W_0$ tend to zero as $k \to \infty$, that~\eqref{eq:OmegaSeries} tends to zero.

We divide both sides by $\|\bar X_i\|$ and upper bound the limiting supremum thus
%
\begin{multline*}
	\limsup_{k \to \infty}\frac{\|\bar X_i^+\|}{\|\bar X_i\|}
		\leq \|\bar A_i\| \\
		+ \frac{1}{\liminf_{k \to \infty}\|\bar X_i\|}
			\limsup_{k \to \infty}\sum_\omega\|c_\omega\| \Big(
		\|\hat\omega_{i -1}\| 
		\left.+ (1 + \|\imath_{i - 1}\|)\|W\||\omega|(\mu\|\imath_0\|)^{|\omega| - 1}\max\{\|\bar X_0\|,\|\bar W_0\|\}^{|\omega| - 1}\right).
\end{multline*}

Suppose, by way of contradiction, that $\liminf_{k \to\infty}\|\bar X_i\| > 0$. Since $\hat w_{i - 1} \to 0$ and $\bar W_0 \to 0$ by hypothesis, $W$ is bounded, and $\bar X_0 \to 0$, the limiting supremum on the right side is $0$, so
\begin{equation}\label{eq:limsup}
	\limsup_{k \to \infty}\frac{\|\bar X_i^+\|}{\|\bar X_i\|} \leq \|\bar A_i\|.
\end{equation}
All our analysis heretofore has been independent of a specific choice of norm. However, at this point, we invoke Theorem~\ref{thm:Anorm} and choose the norm $\|\cdot\| : \g \to \Real$ such that for some $\varepsilon \in (0,1 - \rho(\bar A_i))$, $\|\bar A_i\| = \rho(\bar A_i) + \varepsilon < 1$. By~\eqref{eq:limsup}, we have $\lim_{k \to \infty}\|\bar X_i\| = 0$, which is a contradiction\footnote{It is merely a coincidence that the contradiction here is the main result we are attempting to prove.}. Therefore, $\liminf_{k \to\infty}\|\bar X_i\| = 0$, so given any $\varepsilon > 0$, there exists a time $k_\varepsilon$ such that $\|\bar X_i[k_\varepsilon]\| < \varepsilon$. By Proposition~\ref{prop:Linearization}, $A$ Schur and $W = 0$ implies local exponential stability of the origin, so by a standard perturbation argument, for $W$ sufficiently small, the origin remains locally exponentially stable. Thus, there exist $\beta > 0$, $\bar k \geq 0$ such that if for all $k \geq \bar k$, $\|W[k]\| \leq \beta$, then the origin of $\mc X$ is locally attractive. Therefore, $\bar X_i$ eventually enters the basin of attraction, so $\bar X_i \to 0$. This establishes that the origin is globally attractive. This proves the induction.
\end{proof}

\begin{remark}
Since the dynamics on $\mc X / \h$ are linear, it could be argued that $[\g,\g]$ is the ``best'' possibility for $\h$, since this maximizes the dimension of $\mc X / \h$. However, the choice of $\h$ does not change the analysis or results.
\end{remark}

If we assert that $W$ is bounded, rather than ultimately bounded, then we can strengthen the attractivity result of Theorem~\ref{thm:Solvable} to stability.

\begin{corollary}\label{cor:SolvGAS}
Let $\g$ be a solvable Lie algebra, and define $\mc X := \g^n$ and $\mc W := \g^r$. Consider the dynamics~\eqref{eq:sys} and suppose $f : \mc X \times \mc W \to \mc X$ satisfies Assumption~\ref{asm:f}. If $A$ is Schur, and as $k \to \infty$, $W[k] \to \h^r$, then there exists $\beta > 0$ such that if $\|W[k]\| \leq \beta$, then the origin of $\mc X$ is globally asymptotically stable.
\end{corollary}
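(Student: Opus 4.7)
The plan is to combine the global attractivity furnished by Theorem~\ref{thm:Solvable} with Lyapunov stability of the origin, thereby upgrading the conclusion to GAS. The hypothesis $\|W[k]\| \leq \beta$ is strictly stronger than the ultimate bound $\limsup_{k\to\infty}\|W[k]\| \leq \beta$ required by Theorem~\ref{thm:Solvable}, so global attractivity transfers for free once $\beta$ is taken no larger than the constant $\beta_0$ produced by that theorem. The content of the corollary is then the additional Lyapunov stability at the origin.

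For Lyapunov stability, I would appeal to Proposition~\ref{prop:Linearization}, which identifies the Jacobian linearization of $f$ at the origin as $X \mapsto AX$. Since $A$ is Schur, the unforced system ($W \equiv 0$) is locally exponentially stable at the origin. Treating $W$ as a bounded exogenous disturbance that, by Proposition~\ref{prop:1X}, enters $f$ only through words of length at least two, a standard perturbation argument yields constants $\beta_1 > 0$, $\delta_0 > 0$, and $C > 0$ such that whenever $\sup_k \|W[k]\| \leq \beta_1$ and $\|X[0]\| \leq \delta_0$, the trajectory satisfies $\|X[k]\| \leq C\|X[0]\|$ for all $k \geq 0$. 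A concrete realization is to fix a symmetric positive definite $P$ solving the discrete Lyapunov equation for $A$, let $V(X) = X^\top P X$, and verify via Assumption~\ref{asm:f}.\ref{asm:ClassA} and Proposition~\ref{prop:NormClassA} that the nonlinear remainder contributes only terms of order $\|X\|^3$ and $\|X\|\|W\|$ to $V(X^+) - V(X)$ on a neighbourhood of the origin, which are dominated by the negative-definite leading term once $\|X\|$ and $\|W\|$ are sufficiently small.

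Taking $\beta := \min\{\beta_0,\beta_1\}$, the bound $\|W[k]\| \leq \beta$ simultaneously secures global attractivity (via Theorem~\ref{thm:Solvable}) and local stability (via the preceding argument). For any $\varepsilon > 0$, pick $\delta \in (0,\delta_0]$ with $C\delta < \varepsilon$; then $\|X[0]\| < \delta$ forces $\|X[k]\| < \varepsilon$ for all $k$, while every trajectory tends to $0$, delivering GAS.

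The main subtlety --- modest rather than a genuine obstacle --- is verifying that the perturbation argument returns constants $\beta_1, \delta_0, C$ that are uniform in small initial conditions. This requires the tail of the power series representation of $f$ to be dominated by a Taylor-type quadratic remainder on a neighbourhood of the origin, which is precisely what class-$\mc A$ convergence, together with the vanishing of all constant and $W$-only terms established in Proposition~\ref{prop:1X}, guarantees.
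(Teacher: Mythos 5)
Your proposal is correct and follows essentially the same route as the paper: the paper's proof of this corollary simply reruns the proof of Theorem~\ref{thm:Solvable} with $\bar k = 0$, i.e., global attractivity from the theorem combined with local exponential stability of the origin under small bounded $W$ (the ``standard perturbation argument''), exactly as you do. One small slip: because every word carries at least one letter in $\widetilde X$ (Proposition~\ref{prop:1X}), the disturbance-induced remainder in $V(X^+)-V(X)$ is of order $\|X\|^2\|W\|$, not $\|X\|\|W\|$ --- a term of the latter order would \emph{not} be dominated by $-c\|X\|^2$ near the origin and would only give practical stability, so the correct order is essential to your argument.
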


\begin{proof}
The proof is the same as that of Theorem~\ref{thm:Solvable}, where $\bar k = 0$ (defined near the end of the proof of Theorem~\ref{thm:Solvable}), which implies that the origin of $\mc X$ is locally exponentially stable for all $k \geq 0$.
\end{proof}

The requirement that $W$ be indeterminately small in Theorem~\ref{thm:Solvable} and Corollary~\ref{cor:SolvGAS} is rather restrictive. However, when the map $A$ has spectral radius $0$, $W$ need not be bounded, and we can even relax the assumption that $f$ belongs to class-$\mc A$.

\begin{theorem}\label{thm:Deadbeat}
Consider the dynamics~\eqref{eq:sys}. Let $\g$ be a solvable Lie algebra and $f : \mc X \times \mc W \to \mc X$ be a Lie function that satisfies Assumptions~\ref{asm:f}.\ref{asm:vanish} and~\ref{asm:f}.\ref{asm:invariance}. If $\rho(A) = 0$ and for all $k \geq 0$, $W[k] \in \h^r$, then $X$ converges to zero in finite time.
\end{theorem}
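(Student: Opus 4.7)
The plan is to iteratively descend the lower central series of $\h$, exploiting that $\rho(A)=0$ makes every induced linear map on a quotient nilpotent and hence finite-time zeroing. For the base case, I project the dynamics onto $\mc X/\h^n$: every word $\omega$ appearing in~\eqref{eq:PowerSeries} has $|\omega|\ge 2$, so $\omega\in[\g,\g]\subseteq\h$, and the projection kills all nonlinear terms, leaving the purely linear recursion $\bar X_0^+=\bar A_0\bar X_0$. Since $\rho(\bar A_0)\le\rho(A)=0$ on the finite-dimensional quotient, $\bar A_0$ is nilpotent, so $\bar A_0^{k_0}=0$ for some $k_0$, giving $X[k_0]\in\h^n$; Assumption~\ref{asm:f}.\ref{asm:invariance} then traps $X$ in $\h^n$ thereafter. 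Because $\h^{(p+1)}=0$, once all letters of the words lie in $\h$ the series $\sum_\omega c_\omega\otimes\omega$ collapses to a finite sum, which is exactly what lets us dispense with the class-$\mc A$ hypothesis here.

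The heart of the inductive step is the following bracket lemma: if $\omega$ is a word whose letters all lie in $\h$ and at least one letter lies in $\h^{(i)}$, then $\omega\in\h^{(i+|\omega|-1)}$. I would prove this by induction on $|\omega|$: the case $|\omega|=1$ is immediate; for $|\omega|>1$, write $\omega=[Y_1,\omega']$ and split on whether $Y_1$ is the distinguished letter in $\h^{(i)}$ or merely in $\h$, in either case applying strong centrality, $[\h^{(a)},\h^{(b)}]\subseteq\h^{(a+b)}$ (Theorem~\ref{thm:StronglyCentral}).

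Armed with the lemma, I induct on $i\ge 1$ to show $X[k]\in(\h^{(i)})^n$ eventually, with base case $i=1$ from above. For the step $i\to i+1$: Proposition~\ref{prop:1X} says every word has a letter in $\widetilde X$, which by hypothesis lies in $\h^{(i)}$, while the remaining letters lie in $\h$ since $W\in\h^r$; the bracket lemma then places every word of length $\ge 2$ in $\h^{(i+1)}$. Projecting the dynamics onto $\mc X/(\h^{(i+1)})^n$ therefore yields the linear recursion $\bar X_i^+=\bar A_i\bar X_i$, with $\bar A_i$ nilpotent on a finite-dimensional quotient, so $\bar X_i$ zeros out in finitely many steps, placing $X$ in $(\h^{(i+1)})^n$ and keeping it there by invariance. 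Iterating to $i=p+1$ gives $X\in(\h^{(p+1)})^n=0$ in finite time.

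The principal obstacle is the bracket lemma; it is also where the hypothesis $W\in\h^r$ is essential, since a $W$-letter outside $\h$ could leak the word out of $\h^{(i+1)}$ and break the descent. Each level of the induction takes at most $\dim\!\bigl((\h^{(i)}/\h^{(i+1)})^n\bigr)$ steps, so the total convergence time is bounded by $n\dim\g$.
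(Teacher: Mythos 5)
Your proof is correct, and its overall architecture -- descend the lower central series of $\h$, observe that each quotient system becomes linear once the previous level has died, and use $\rho(\bar A_i)=0$ to zero out each level in finitely many steps -- is the same as the paper's. Where you genuinely diverge is in how you kill the nonlinear terms at each stage. The paper reuses the machinery from its solvable-case analysis: Lemma~\ref{lem:SolvBracket} (the decomposition of $P_i\omega$ via the injections $\imath_j$) together with the norm bounds~\eqref{eq:PiOmegaXBound} and~\eqref{eq:PiOmegaWBound}, specialized to $\bar W_0=0$ and $\bar X_0=0$, to conclude $\|P_i\omega\|\le\|\hat\omega_{i-1}\|=0$. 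You instead prove a purely algebraic bracket lemma -- a word with all letters in $\h$ and at least one letter in $\h^{(i)}$ lies in $\h^{(i+|\omega|-1)}$ -- directly from strong centrality (Theorem~\ref{thm:StronglyCentral}), which places every word in $\h^{(i+1)}$ outright and makes the quotient dynamics exactly linear with no norm estimates at all. This is more elementary and arguably cleaner for the deadbeat case, since it avoids the right-inverses $\imath_j$ and any analytic bookkeeping; the paper's route is presumably chosen for economy, as the same bounds are needed in Theorem~\ref{thm:Solvable} where norms genuinely matter. Your telescoping time bound of $n\dim\g$ is also correct and is stated more carefully than the paper's (which uses $\bar A_0^{\dim\mf K}=0$ where the safe exponent is $\dim\mf K^n=n\dim\mf K$). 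One cosmetic remark: the appeal to Assumption~\ref{asm:f}.\ref{asm:invariance} to ``trap'' $X$ in $(\h^{(i)})^n$ is correct but not strictly needed for the base case, since the autonomous quotient recursion $\bar X_0^+=\bar A_0\bar X_0$ already keeps $\bar X_0$ at zero once it gets there; it is, however, genuinely used in your inductive step, where you need $X[k]\in(\h^{(i)})^n$ to persist so that the bracket lemma applies at every subsequent time.
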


\begin{proof}
The quotient dynamics on $\mc X / \h = \mf K^n$ are
\begin{equation*}
	\bar X_0^+ = \bar A_0\bar X_0.
\end{equation*}

That $A$ has spectral radius zero implies that $\bar A_0 : \mf K^n \to \mf K^n$ has spectral radius zero, which implies $\bar A_0^{\dim \mf K} = 0$. Therefore, for all $k \geq \dim \mf K$, we have $\bar X_0[k] = 0$.

By way of induction, we assert that for all $k \geq i\dim \g - \sum_{j = 1}^i\dim \h^{(j)}$, $\bar X_{i - 1}[k] = 0$.

Define $\hat\omega_{i - 1}$, $q$, $\Omega_X$, and $\Omega_W$ as in the proof of Theorem~\ref{thm:Solvable}. If $\omega \in \Omega_X$, then from~\eqref{eq:PiOmegaXBound}, for all $k \geq \dim \mf K$, $\|P_i\omega\| \leq \|\hat\omega_{i - 1}\|$. Since $\|\bar W_0\| = 0$, if $\omega \in \Omega_W$, then from~\eqref{eq:PiOmegaWBound},
\begin{equation*}
	\|P_i\omega\| \leq \|\hat\omega_{i -1}\| + (1 + \|\imath_{i - 1}\|)(\mu\|\imath_0\|)^{|\omega| - 1}\|W\|\|\bar X_0\|^{|\omega| - 1},
\end{equation*}
which for $k \geq \dim \mf K$, simplifies to $\|P_i\omega\| \leq \|\hat\omega_{i - 1}\|$. Since every word $\omega$ has at least one letter in $\widetilde X$, the induction hypothesis implies $\hat\omega_{i - 1} = 0$ for all $k \geq i\dim \g - \sum_{j = 1}^i\dim \h^{(j)}$. Therefore, for all $k \geq i\dim \g - \sum_{j = 1}^i\dim \h^{(j)}$, the quotient dynamics reduce to
\begin{equation*}
	\bar X_i^+ = \bar A_i\bar X_i,
\end{equation*}
where $\rho(\bar A_i) = 0$, and so $\bar A_i^{\dim\left(\g / \h^{(i + 1)}\right)} = 0$, where $\dim\left(\g / \h^{(i + 1)}\right) = \dim \g - \dim \h^{(i + 1)}$; in particular, $\dim \mf K = \dim \g - \dim \h$. Thus, for all $k \geq (i + 1)\dim \g - \sum_{j = 1}^{i + 1}\dim \h^{(j)}$, $\bar X_i[k]$ is zero.

Since $p$ is the nilindex of $\h$, we have $P_p\g = \g / \h^{(p + 1)} = \g / 0 \cong \g$, and so the induction terminates at $i = p$. Consequently, for all $k \geq (p + 1)\dim \g - \sum_{j = 1}^p\dim \h^{(j)}$, $X[k] = 0$.
\end{proof}

\begin{corollary}\label{cor:SolvGA}
Consider the dynamics~\eqref{eq:sys}. Let $\g$ be a solvable Lie algebra and $f : \mc X \times \mc W \to \mc X$ be a Lie function that satisfies Assumption~\ref{asm:f}.\ref{asm:vanish} and~\ref{asm:f}.\ref{asm:invariance}. If $\rho(A) = 0$ and for all $k \geq 0$, $W[k] \in \h^r$, then the origin of $\mc X$ is globally attractive.
\end{corollary}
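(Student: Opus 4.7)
The plan is that this corollary is an immediate consequence of Theorem~\ref{thm:Deadbeat}, which under identical hypotheses asserts the strictly stronger conclusion that every trajectory $X[k]$ reaches the origin in finite time. Specifically, Theorem~\ref{thm:Deadbeat} gives the explicit bound $k \geq (p+1)\dim \g - \sum_{j=1}^p \dim \h^{(j)}$ after which $X[k] = 0$, and this bound depends only on the algebraic data $(\g, \h)$, not on the initial condition $X[0] \in \mc X$ nor on the exogenous signal $W$ (so long as $W[k] \in \h^r$).

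So the proof is one line: for any $X[0] \in \mc X$, by Theorem~\ref{thm:Deadbeat} there exists a finite $k^\star := (p+1)\dim \g - \sum_{j=1}^p \dim \h^{(j)}$ such that $X[k] = 0$ for all $k \geq k^\star$; in particular $\lim_{k \to \infty} X[k] = 0$, which is precisely global attractivity of the origin of $\mc X$.

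There is no real obstacle here since the hard work has already been done in the proof of Theorem~\ref{thm:Deadbeat}. The only thing worth being careful about is checking that the hypotheses match verbatim (solvable $\g$, Lie function $f$, Assumptions~\ref{asm:f}.\ref{asm:vanish} and~\ref{asm:f}.\ref{asm:invariance}, $\rho(A) = 0$, and $W[k] \in \h^r$ for all $k \geq 0$), which they do exactly. No appeal to Assumption~\ref{asm:f}.\ref{asm:ClassA} or to any boundedness assumption on $W$ is needed.
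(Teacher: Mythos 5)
Your proposal is correct and matches the paper's own proof exactly: Corollary~\ref{cor:SolvGA} is stated as an immediate consequence of the finite-time convergence guaranteed by Theorem~\ref{thm:Deadbeat}. The hypotheses do match verbatim, and your observation that neither Assumption~\ref{asm:f}.\ref{asm:ClassA} nor boundedness of $W$ is needed is consistent with the paper.
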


\begin{proof}
By Theorem~\ref{thm:Deadbeat}, the state $X$ tends to the origin for any initial conditions.
\end{proof}

\begin{corollary}\label{cor:SolvSGES}
Consider the dynamics~\eqref{eq:sys}. Let $\g$ be a solvable Lie algebra and $f : \mc X \times \mc W \to \mc X$ be a Lie function that satisfies Assumption~\ref{asm:f}.\ref{asm:vanish} and~\ref{asm:f}.\ref{asm:invariance}. If $\rho(A) = 0$, there exists $\beta \geq 0$ such that $\|W\| \leq \beta$, and for all $k \geq 0$, $W[k] \in \h^r$, then the origin of $\mc X$ is semiglobally exponentially stable.
\end{corollary}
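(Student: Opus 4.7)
The plan is to invoke Theorem~\ref{thm:Deadbeat} for finite-time convergence and then upgrade this to exponential decay by a uniform transient bound on balls of initial conditions. The hypotheses of Theorem~\ref{thm:Deadbeat} are met, so there is a constant $N := (p + 1)\dim\g - \sum_{j = 1}^p \dim\h^{(j)}$, independent of $X[0]$ and of the exogenous signal, such that $X[k] = 0$ for all $k \geq N$. What remains is to bound $\|X[k]\|$ in terms of $\|X[0]\|$ for $0 \leq k \leq N$.

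Fix $M > 0$ arbitrarily and consider any initial condition with $\|X[0]\| \leq M$, together with $\|W[k]\| \leq \beta$ and $W[k] \in \h^r$ for all $k \geq 0$. First I would iterate $X^+ = f(X,W)$ for $N$ steps to derive a uniform bound $\|X[k]\| \leq M' = M'(M, \beta)$ on the transient. Next I would extract a linear-in-$X$ estimate of $f$: by Proposition~\ref{prop:1X}, every word $\omega$ in the series of $f$ contains at least one letter from $\widetilde X$, and by the bilinearity bound $\|\omega\| \leq \mu^{|\omega|-1}\|Y_{\omega_1}\|\cdots\|Y_{\omega_{|\omega|}}\|$ of Remark~\ref{rem:Norm}, each word factors through at least one copy of $\|X\|$. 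This yields $\|f(X,W)\| \leq K\|X\|$ on the compact set $\{\|X\| \leq M'\} \times \{\|W\| \leq \beta\}$, for some $K = K(M', \beta) \geq 1$.

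Iterating $\|X[k+1]\| \leq K\|X[k]\|$ gives $\|X[k]\| \leq K^k \|X[0]\|$ for $0 \leq k \leq N$, and $\|X[k]\| = 0$ for $k > N$. Choosing any $\lambda \in (0,1)$ and setting $\alpha := \max\{1, (K/\lambda)^N\}$, the pair $(\alpha, \lambda)$ depends only on $(M, \beta)$ and satisfies $\|X[k]\| \leq \alpha\lambda^k\|X[0]\|$ for every $k \geq 0$ and every $\|X[0]\| \leq M$. This is precisely the required semiglobal exponential stability estimate.

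The main obstacle is justifying the linear estimate $\|f(X,W)\| \leq K\|X\|$ on the compact trajectory set in the absence of the class-$\mc A$ hypothesis. The resolution parallels the technique of Theorem~\ref{thm:Deadbeat}: since $\h$ is a nilpotent ideal with $\h \supseteq [\g,\g]$, the bracket terms that survive projection onto each quotient in the finite chain $\g/\h, \g/\h^{(2)}, \ldots, \g/\h^{(p+1)} = \g$ form a controllable family whose Lipschitz behaviour on compact sets can be assembled layer by layer through the quotient hierarchy, producing the required uniform constant $K$ and thus closing the argument.
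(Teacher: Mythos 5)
Your overall skeleton is the paper's: invoke Theorem~\ref{thm:Deadbeat} to get a uniform deadbeat horizon $N$, then bound the transient on $0 \leq k \leq N$ uniformly over $\|X[0]\| \leq M$ and $\|W\| \leq \beta$. The paper does the second step crudely, by taking the maximum of $\|X[k]\|$ over a compact set of transients; you instead try to prove a per-step linear gain $\|f(X,W)\| \leq K\|X\|$ so that the transient bound comes out proportional to $\|X[0]\|$. That instinct is good --- the definition of semiglobal exponential stability literally requires a bound of the form $\alpha\lambda^k\|X[0]\|$, and a constant transient bound does not by itself deliver that as $\|X[0]\| \to 0$.

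The gap is exactly where you flag it, and your last paragraph does not close it. The estimate $\|f(X,W)\| \leq K\|X\|$ is obtained by bounding each word via Remark~\ref{rem:Norm} and Proposition~\ref{prop:1X} and then summing over all words; that sum is $\sum_\omega \|c_\omega\|\mu^{|\omega|-1}(\cdots)$, and its convergence on a neighbourhood of a compact set is precisely the class-$\mc A$ property~\eqref{eq:MyConvergence}, i.e.\ Assumption~\ref{asm:f}.\ref{asm:ClassA}, which this corollary deliberately omits. Nor does passing to the quotients $\g/\h^{(i+1)}$ rescue the term-by-term estimate: a word such as $[X_1,[X_1,[\ldots,[X_1,X_2]]]]$ with all but one letter outside $\h$ can have arbitrary length and still project nontrivially onto $\g/\h^{(2)}$, so the quotient series are not finite sums (Theorem~\ref{thm:Deadbeat} avoids this only because, after the deadbeat time of the previous layer, every surviving word contains a letter that has already been driven to zero). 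The closing sentence about a ``controllable family whose Lipschitz behaviour\ldots can be assembled layer by layer'' is not an argument; as written, the Lipschitz constant $K$ does not exist under the stated hypotheses. To repair the proof you must either (a) add the class-$\mc A$ hypothesis (or some continuity/boundedness-on-compacts hypothesis on $f$), after which your argument goes through and is in fact sharper than the paper's, or (b) fall back on the paper's compactness bound and accept the weaker conclusion it actually yields.
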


\begin{proof}
By Theorem~\ref{thm:Deadbeat}, $X[k]$ converges to zero in finite time. Define $\bar k := \argmin_k\{X[k] = 0\}$ and let $M \geq 0$ be arbitrary. Since $\|\cdot\| : \mc X \to \Real$ is continuous, $\|X[k]\|$ attains its maximum on the compact set $\{X[k] : 0 \leq k \leq \bar k,\ \|W[k]\| \leq \beta,\ \|X[0]\| \leq M\}$. Choosing any $\lambda \in [0,1)$, there exists finite $\alpha > 0$ such that $\|X[k]\| \leq \alpha\lambda^k\|X[0]\|$, where $\alpha$ depends on $\|X[0]\|$ and $\beta$.
\end{proof}

\begin{remark}
Theorem~\ref{thm:Deadbeat} and Corollaries~\ref{cor:SolvGA} and~\ref{cor:SolvSGES} easily extend to the case where there exists $k_\h \in \Int_{\geq 0}$ such that for all $k \geq k_\h$, $W[k] \in \h^r$, but $W[0]$ is not necessarily in $\h^r$.
\end{remark}

\begin{example}\label{ex:Solvable}
Consider the $6$-dimensional real upper triangular algebra, whose nonvanishing Lie brackets are
\begin{equation*}
	[t_1,t_4] = t_4, \quad [t_1,t_6] = t_6, \quad
	[t_2,t_4] = -t_4, \quad [t_2,t_5] = t_5, \quad
	[t_3,t_5] = -t_5, \quad [t_3,t_6] = -t_6, \quad
	[t_4,t_5] = t_6.
\end{equation*}

The derived algebra is $\h = \mathrm{Lie}_\Real\{t_4,t_5,t_6\}$, which has lower central series $\h =: \h^{(1)} \supset \h^{(2)} \supset \h^{(3)} = 0$, where $\h^{(2)} = \mathrm{Lie}_\Real\{h_6\} \cong \mathrm{Span}_\Real\{h_6\}$. We remark that the derived algebra $\h$ and the Heisenberg algebra are isomorphic as Lie algebras.

We will consider a dynamical system driven by the exogenous signal $W := (W_1,W_2) \in \g^2 =: \mc W$
\begin{equation*}
\begin{aligned}
	W_1^+ &= 2\left(1 - k(1.1)^{-0.5k}\right)\sin(10k)W_0 \\
	W_2^+ &= \left(2 - k^2(1.1)^{-2k}\right)\cos(20k)W_0,
\end{aligned}
\end{equation*}
where $W_0 = t_4 + 7t_5 + 6t_6 \in \h$. Note that $W$ is bounded.

Consider the dynamical system with state $X := (X_1,X_2) \in \g^2 =: \mc X$
\begin{equation*}
\begin{aligned}
	X_1^+ &= \frac{1}{2}\exp(W_1)X_1\exp(-W_1) - \exp(X_2)X_1\exp(-X_2)  + \frac{1}{2}\exp(W_2)X_2\exp(-W_2) \\
	X_2^+ &= \frac{1}{2}\exp(X_2)X_1\exp(-X_2) + \frac{1}{4}\exp(X_1 + W_1)X_2\exp(-(X_1 + W_1)),
\end{aligned}
\end{equation*}
where for all $Y \in \g$, $\exp(Y)X_i\exp(-Y) \in \g$~\cite[Propositions 2.16, 2.17]{Hall2015}. To see that these dynamics are indeed a Lie function, we use $\exp(Y)X_i\exp(-Y) = e^{\ad_Y}X_i$~\cite[Proposition 2.25]{Hall2015}:
\begin{equation*}
\begin{aligned}
	X_1^+ &= \left(\frac{1}{2}e^{\ad_{W_1}} - e^{\ad_{X_2}}\right)X_1  + \frac{1}{2}e^{\ad_{W_2}}X_2 \\
	X_2^+ &= \frac{1}{2}e^{\ad_{X_2}}X_1 + \frac{1}{4}e^{\ad_{X_1 + W_1}}X_2.
\end{aligned}
\end{equation*}

Recall $e^{\ad_Y} = \Id_\g + \ad_Y + \frac{1}{2!}\ad_Y^2 + \frac{1}{3!}\ad_Y^3 + \cdots$, yielding
\begin{equation*}
\begin{aligned}
	X_1^+ &= -\frac{1}{2}X_1 + \frac{1}{2}X_2 
		+  \sum_{\ell = 2}^\infty\frac{1}{\ell !}\left(\left(\frac{1}{2}\ad_{W_1}^\ell  - \ad_{X_2}^\ell \right)X_1 + \frac{1}{2}\ad_{W_2}^\ell X_2\right) \\
	X_2^+ &= \frac{1}{2}X_1 + \frac{1}{4}X_2 + \sum_{\ell = 2}^\infty\frac{1}{\ell !}\left(\frac{1}{2}\ad_{X_2}^\ell X_1 + \frac{1}{4}\ad_{X_1 + W_1}^\ell X_2\right).
\end{aligned}
\end{equation*}

Using the basis $\{t_1,t_2,t_3,t_4,t_5,t_6\}$ for $\g$, and letting $I_6 \in \Real^{6 \times 6}$ be the identity matrix, we can express the dynamics of $X$, as
\begin{equation*}
	X^+ = \underbrace{\left(\begin{bmatrix}-\frac{1}{2} & \frac{1}{2} \\ \frac{1}{2} & \frac{1}{4}\end{bmatrix} \otimes I_6\right)}_{\mathrm{Mat}A}X 
		+ \sum_{\ell = 2}^\infty\frac{1}{\ell !}\begin{bmatrix}\left(\frac{1}{2}\ad_{W_1}^\ell  - \ad_{X_2}^\ell \right)X_1 + \frac{1}{2}\ad_{W_2}^\ell X_2 \\
	\frac{1}{2}\ad_{X_2}^\ell X_1 + \frac{1}{4}\ad_{X_1 + W_1}^\ell X_2\end{bmatrix}.
\end{equation*}

We now verify that Assumption~\ref{asm:f} is satisfied. For all $Y \in \g$, $\|\ad_Y^\ell X_i\| \leq \mu^{\ell - 1}\|Y\|^{\ell - 1}\|X_i\|$, yielding
\begin{equation*}
	\|e^{\ad_Y}X_i\| \leq \sum_{\ell = 1}^\infty\frac{(\mu\|Y\|)^{\ell - 1}}{\ell !}\|X_i\| 
	= \frac{e^{\mu\|Y\|} - 1}{\mu\|Y\|}\|X_i\| < \infty,
\end{equation*}
so the dynamics of $X$ belong to class-$\mc A$, thereby satisfying Assumption~\ref{asm:f}.\ref{asm:ClassA}.

That $X = 0$ is an equilibrium is verified by substituting $X = 0$ into the dynamics. To verify that $X = 0$ is the only equilibrium, recall that the derived algebra is $\mathrm{Lie}_\Real\{t_4,t_5,t_6\}$, so a point is an equilibrium only if
\begin{equation*}
	P_0X = \underbrace{\left(\begin{bmatrix}-\frac{1}{2} & \frac{1}{2} \\ \frac{1}{2} & \frac{1}{4}\end{bmatrix} \otimes I_3\right)}_{\mathrm{Mat}\bar A_0}P_0X,
\end{equation*}
where $\rho(\bar A_0) = \bigsqcup_{i = 1}^3\left\{-\frac{3}{4},\frac{1}{2}\right\}$, implying that $\bar A_0$ is bijective. Therefore, a point can be an equilibrium only if $P_0X = 0$, or equivalently, $X \in \h^2$. As mentioned, $\h$ is isomorphic to the Heisenberg algebra, so the rest of the argument that Assumption~\ref{asm:f}.\ref{asm:vanish} is satisfied is similar to that in Example~\ref{ex:Nilpotent}.

It is clear from the form of $\mathrm{Mat}A$ that $A\mf h_i^2 \subseteq \mf h_i^2$. By Corollary~\ref{cor:Ainvariant}, this verifies Assumption~\ref{asm:f}.\ref{asm:invariance}.

From $\mathrm{Mat}A$, we find $\rho(A) = \bigsqcup_{i = 1}^6\left\{-\frac{3}{4},\frac{1}{2}\right\}$. Thus, by Theorem~\ref{thm:Solvable}, if the limiting supremum of $W$ is sufficiently small, then the origin of $\mc X$ is globally attractive. By Corollary~\ref{cor:SolvGAS}, if $W$ is bounded sufficiently small, then the origin is globally asymptotically stable. We illustrate simply that for the arbitrary choice of $W$ in this example, that $X \to 0$ as $k \to \infty$, as seen in Figure~\ref{fig:SolvableX}.
\begin{figure}[htb!]
\centering
\includegraphics[width=1\columnwidth]{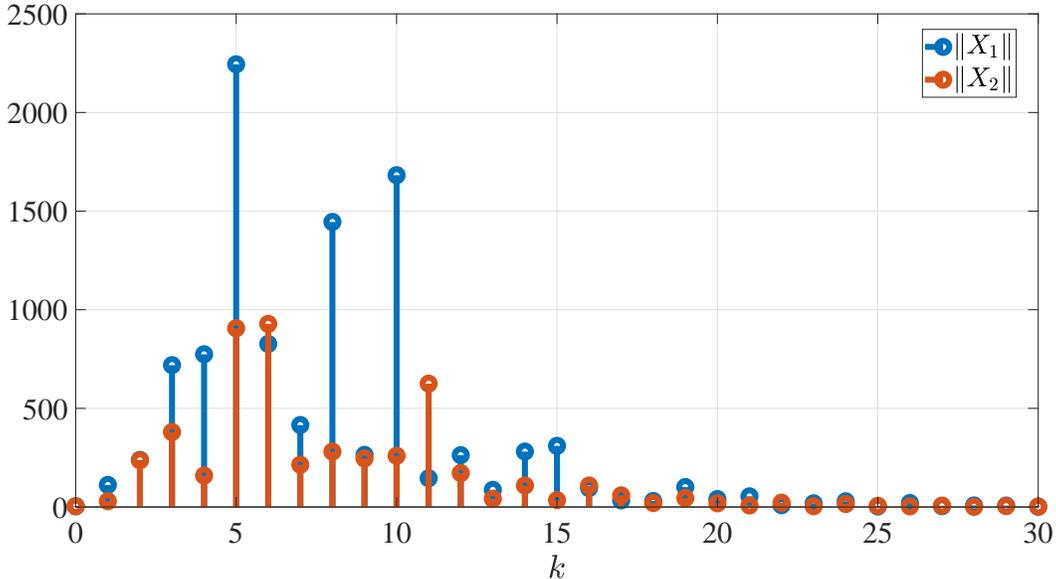}
\caption{The norms of the states $X_1,X_2 \in \g$.}
\label{fig:SolvableX}
\end{figure}
\exsymbol\end{example}

\section{Summary and Future Research}

We showed that for a class of systems evolving on solvable Lie algebras, global stability properties can be inferred from the linear part the dynamics. If the Lie algebra is solvable, then global asymptotic stability can be established. If the Lie algebra is nilpotent, then semiglobal exponential stability can be established. An interesting topic of future research would be strengthening the results in the more general, non-nilpotent case. 
Given an arbitrary finite-dimensional Lie algebra, it would be interesting to explore the use of the Levi decomposition to study the quotient dynamics on the radical, and see what utility this offers for studying stability on the full Lie algebra.

\section*{Appendix}

\subsection{Proof of \texorpdfstring{Claim~\ref{claim:u}}{Claim 1}}\label{app:u}
\begin{proof}[Claim~\ref{claim:u}]
Fix the word length $\ell \geq 2$ and the number of letters in $\widetilde X$, $1 \leq q \leq \ell$. There are $n^q$ choices of letters in $\widetilde X$, $r^{\ell - q}$ choices of letters in $\widetilde W$, and $\binom{\ell}{q}$ ways to position the letters in $\widetilde X$. Thus, there are $\binom{\ell}{q}n^qr^{\ell - q}$ words of length $\ell$ with $q$ letters in $\widetilde X$. First, recall from~\eqref{eq:ui}, that $u_i := \sum_{|\omega| \leq i}c_\omega \otimes (P_i\bar\omega_{i - 1})$. Applying~\eqref{eq:PwDecay}, we have
%

\begin{equation*}
	\|u_i[k]\| \leq \left(\sum_{\substack{2 \leq \ell \leq i \\ 1 \leq q \leq \ell}}\max_{|\omega| = \ell}\{\|c_\omega\|\}n^qr^{\ell - q}\mu^{\ell - 1}\|\imath_{i - 1}\|^\ell\alpha_{i - 1}^q\|\bar X_{i - 1}[0]\|^q\beta^{\ell - q}\right)\max_{\substack{2 \leq \ell \leq i \\ 1 \leq q \leq \ell}}\{\lambda_{i - 1}^qs^{\ell - q}\}^k,
\end{equation*}
whose right side equals
\begin{equation*}
	\overbrace{\left(\sum_{\ell = 2}^i\max_{|\omega| = \ell}\{\|c_\omega\|\}\mu^{\ell - 1}\|\imath_{i - 1}\|^\ell\sum_{q = 1}^\ell\binom{\ell}{q}n^qr^{\ell - q}\alpha_{i - 1}^qM^{q - 1}\beta^{\ell - q}\right)}^{=: \gamma_i} 
	\times{\underbrace{\max_{\substack{2 \leq \ell \leq i \\ 1 \leq q \leq \ell}}\{\lambda_{i - 1}^qs^{\ell - q}\}}_{\lambda_i}}^k\|\bar X_i[0]\|.
\end{equation*}

Since $0 < \lambda_{i - 1} < 1$ and $s \geq 1$, the maximization defining $\lambda_i$ is solved by $\ell = i$ and $q = 1$.
\end{proof}

\subsection{Proof of \texorpdfstring{Lemma~\ref{lem:SolvBracket}}{Lemma 5.2}}\label{app:SolvBracket}
\begin{proof}[Lemma~\ref{lem:SolvBracket}]
Using $\Id_\g - \imath_{i - 1} \circ P_{i - 1} + \imath_{i - 1} \circ P_{i - 1} = \Id_\g$ and  bilinearity of the Lie bracket,
\begin{equation}\label{eq:SolvPw}
	P_i\omega 
	= P_i[(\Id_\g - \imath_{i - 1} \circ P_{i - 1})Y_1,[Y_2,[\ldots,Y_{|\omega|}]\cdots] 
		+ P_i[\imath_{i - 1} \circ P_{i - 1}Y_1,[Y_2,[\ldots,Y_{|\omega|}]\cdots],
		\qquad Y \in \widetilde X \cup \widetilde W.
\end{equation}

We next decompose the second letter of the first term in~\eqref{eq:SolvPw} with respect to $\imath_0 \circ P_0$ and invoke Lemma~\ref{lem:CompProj}:
\begin{multline}\label{eq:SolvWordDecomp}
	P_i[(\Id_\g - \imath_{i - 1} \circ P_{i - 1})Y_1,[Y_2,[\ldots,Y_{|\omega|}]\cdots]
	= P_i[(\Id_\g - \imath_{i - 1} \circ P_{i - 1})Y_1,[\imath_0 \circ P_0 Y_2,[\ldots,Y_{|\omega|}]\cdots] \\
	+ P_i\underbrace{[\underbrace{(\Id_\g - \imath_{i - 1} \circ P_{i - 1})Y_1}_{\in \h^{(i)}},[\underbrace{(\Id_\g - \imath_0 \circ P_0)Y_2}_{\in \h^{(1)}},[\ldots,Y_{|\omega|}]\cdots]}_{\in \h^{(i + 1)}},
\end{multline}
where membership in $\h^{(i + 1)}$ follows from Theorem~\ref{thm:StronglyCentral}; the second term is zero, since $P_i\h^{(i + 1)} = 0$. Decomposing the rest of the letters in~\eqref{eq:SolvWordDecomp} with respect to $\imath_0 \circ P_0$ yields
\begin{equation}\label{eq:LastTwoLetters}
	P_i[(\Id_\g - \imath_{i - 1} \circ P_{i - 1})Y_1,[Y_2,[\ldots,Y_{|\omega|}]\cdots] =
	P_i[(\Id_\g - \imath_{i - 1} \circ P_{i - 1})Y_1,[\imath_0 \circ P_0 Y_2,[\ldots,\imath_0 \circ P_0 Y_{|\omega|}]\cdots].
\end{equation}

Now decompose the second letter of the second term in~\eqref{eq:SolvPw} with respect to $\imath_{i - 1} \circ P_{i - 1}$:
\begin{multline}
	P_i[\imath_{i - 1} \circ P_{i - 1}Y_1,[Y_2,[\ldots,Y_{|\omega|}]\cdots] 
		= P_i[\imath_{i - 1} \circ P_{i - 1}Y_1,[\overbrace{(\Id_\g - \imath_{i - 1} \circ P_{i - 1})Y_2}^{\in \h^{(i)}},[Y_3,[\ldots,Y_{|\omega|}]\cdots] \\
			+ P_i[\imath_{i - 1} \circ P_{i - 1}Y_1,[\imath_{i - 1} \circ P_{i - 1} Y_2,[Y_3,[\ldots,Y_{|\omega|}]\cdots].
\end{multline}

We continue in a fashion similar to that following~\eqref{eq:SolvPw}, the only noteworthy difference is the decomposition of $\imath_{i - 1} \circ P_{i - 1}Y_1$ with respect to $\imath_0 \circ P_0$.

\begin{claim}\label{claim:PiP}
For all $i \geq 1$, the following diagram commutes.
\begin{equation*}
	\xymatrix{\mc \g \ar[drr]_{P_0} \ar[r]^{P_{i - 1}} 	&	\g / \h^{(i)} \ar[r]^{\imath_{i - 1}} & \g \ar[d]^{P_0} \\
			& &	\g / \h}
\end{equation*}
\end{claim}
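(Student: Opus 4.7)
The plan is to establish the diagram's commutativity by verifying the pointwise identity $P_0 \circ \imath_{i-1} \circ P_{i-1} = P_0$ on $\g$. The key structural fact I will exploit is the monotonicity of the lower central series: since $\h^{(1)} = \h$ and $\h^{(j+1)} = [\h^{(j)},\h] \subseteq \h^{(j)}$ for all $j \geq 1$, we have $\h^{(i)} \subseteq \h$ for every $i \geq 1$. In particular $\Ker P_{i-1} = \h^{(i)} \subseteq \h = \Ker P_0$, which is all that is needed.

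First I would apply Lemma~\ref{lem:CompProj} with the ambient space $\g$ and subspace $\h^{(i)}$, using $P_{i-1}$ as the canonical projection and $\imath_{i-1}$ as its right-inverse; this yields $(\Id_\g - \imath_{i-1} \circ P_{i-1})\g \subseteq \h^{(i)}$. Composing this containment on the left with $P_0$ and invoking $\h^{(i)} \subseteq \h = \Ker P_0$ immediately gives $P_0 \circ (\Id_\g - \imath_{i-1} \circ P_{i-1}) = 0$, which rearranges to $P_0 = P_0 \circ \imath_{i-1} \circ P_{i-1}$.

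There is no serious obstacle: the claim reduces in two short steps to Lemma~\ref{lem:CompProj} together with the elementary inclusion $\h^{(i)} \subseteq \h$, so the proof is essentially a direct verification.
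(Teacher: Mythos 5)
Your proof is correct and rests on exactly the same facts as the paper's: the decomposition $(\Id_\g - \imath_{i-1}\circ P_{i-1})\g \subseteq \h^{(i)}$ (the paper phrases this as $\g = \image\,\imath_{i-1}\oplus\h^{(i)}$, you invoke Lemma~\ref{lem:CompProj}) together with the containment $\h^{(i)}\subseteq\h=\Ker P_0$. If anything, your version is slightly tidier, since it concludes the operator identity $P_0 = P_0\circ\imath_{i-1}\circ P_{i-1}$ directly rather than stopping at an equality of images.
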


\begin{proof}[Claim~\ref{claim:PiP}]
From the definitions of $P_0$, $P_{i - 1}$, and $\imath_{i - 1}$, we have $\g = \image \imath_{i - 1} \oplus \h^{(i)}$ and $\Ker P_0 = \h \supseteq \h^{(i)} = \Ker P_{i - 1}$. Then $P_0\g = P_0 \image \imath_{i - 1} \oplus P_0 \h^{(i)} = P_0 \image \imath_{i - 1}$.
\end{proof}

It follows immediately from Claim~\ref{claim:PiP} that $\imath_0 \circ P_0 \circ \imath_{i - 1} \circ P_{i - 1} = \imath_0 \circ P_0$. Thus, the decomposition process specified above yields
\begin{multline}\label{eq:Solv12}
	P_i\omega 
		= P_i[\imath_{i - 1} \circ P_{i - 1} Y_1,[\imath_{i - 1} \circ P_{i - 1} Y_2,[Y_3,[\ldots,Y_{|\omega|}]\cdots] \\
			+ P_i[(\Id_\g - \imath_{i - 1} \circ P_{i - 1})Y_1,[\imath_0 \circ P_0 Y_2,[\ldots,\imath_0 \circ P_0 Y_{|\omega|}]\cdots] \\
				+ P_i[\imath_0 \circ P_0Y_1,[(\Id_\g - \imath_{i - 1} \circ P_{i - 1})Y_2,[\imath_0 \circ P_0 Y_3,[\ldots,\imath_0 \circ P_0 Y_{|\omega|}]\cdots].
\end{multline}

Applying this process to the rest of the letters in the first word of~\eqref{eq:Solv12} completes the proof.
\end{proof}

\subsection{Proof of \texorpdfstring{Claim~\ref{claim:Converges}}{Claim 2}}\label{app:Converges}
\begin{proof}[Claim~\ref{claim:Converges}]
Suppose $f$ satisfies~\eqref{eq:MyConvergence}. In particular, suppose there exists $\varrho_1 \leq 1$ such that $$\|X_1\|,\ldots,\|X_n\|,\|W_1\|,\ldots,\|W_r\| < \varrho_1.$$ On this domain, we have 
%
	$\|\omega\| \leq \mu^{|\omega| - 1}\varrho_1^{|\omega|}$
%
and
\begin{equation*}
	\sum_\omega\mu^{|\omega| - 1}\|c_\omega\|\varrho_1^{|\omega|} < \infty.
\end{equation*}
We can rewrite this summation by grouping all words of the same length:
\begin{equation*}
	\sum_{\ell = 2}^\infty\mu^{\ell - 1}\left(\sum_{|\omega| = \ell}\|c_\omega\|\right)\varrho_1^\ell,
\end{equation*}
which can be viewed as a series over the single index $\ell$. Since this series converges, by the root test~\cite[Theorem 3.33]{Rudin1976}, we have
\begin{equation*}
\begin{aligned}
	\limsup_{\ell \to \infty}\sqrt[\ell]{\mu^{\ell - 1}\varrho_1^\ell\sum_{|\omega| = \ell}\|c_\omega\|}
	&= \varrho_1\limsup_{\ell \to \infty}\mu^{1 - \frac{1}{\ell}}\limsup_{\ell \to \infty}\sqrt[\ell]{\sum_{|\omega| = \ell}\|c_\omega\|} \\
	&= \varrho_1\mu\limsup_{\ell \to \infty}\sqrt[\ell]{\sum_{|\omega| = \ell}\|c_\omega\|} \\
	&\leq 1.
\end{aligned}
\end{equation*}

Let $0 < \varrho_2 < \frac{\varrho_1}{\|\imath_0\|}$. Applying the root test to the series
\begin{equation}\label{eq:AConvergenceomega}
	\sum_\omega(\mu\|\imath_0\|)^{|\omega| - 1}|\omega|\|c_\omega\|\varrho_2^{|\omega|},
\end{equation}
we have 
\begin{equation*}
\begin{aligned}
	\limsup_{\ell \to \infty}\sqrt[\ell]{\ell(\mu\|\imath_0\|)^{\ell - 1}\varrho_2^\ell\sum_{|\omega| = \ell}\|c_\omega\|}
	&= \varrho_2\mu\|\imath_0\|\limsup_{\ell \to \infty}\sqrt[\ell]{\ell}\limsup_{\ell \to \infty}\sqrt[\ell]{\sum_{|\omega| = \ell}\|c_\omega\|} \\
	&= \varrho_2\mu\|\imath_0\|\limsup_{\ell \to \infty}\sqrt[\ell]{\sum_{|\omega| = \ell}\|c_\omega\|} \\
	&< \varrho_1\mu\limsup_{\ell \to \infty}\sqrt[\ell]{\sum_{|\omega| = \ell}\|c_\omega\|} \\
	&< 1,
\end{aligned}
\end{equation*}
which implies that~\eqref{eq:AConvergenceomega} converges. Let $\varrho \leq \varrho_2^2$, then for all $|\omega| \geq 2$, $\varrho^{|\omega| - 1} < \varrho_2^{|\omega|}$. Then, by the comparison test~\cite[Theorem 3.25]{Rudin1976}, if $\|\bar X_0\|,\|\bar W_0\| \leq \varrho$, then~\eqref{eq:OmegaSeries} converges.
\end{proof}

\bibliographystyle{spphys}
\bibliography{references}
\end{document}